\numberwithin{equation}{section}
\newtheorem{claim}{\sc Claim}
\newcommand{\N}{\mathbb{N}}
\newcommand{\R}{\mathbb{R}}
\newcommand{\MM}{\mathscr{M}}
\newcommand{\PP}{\mathscr{P}}
\newcommand{\mm}{{\mbox{\boldmath$m$}}}
\newcommand{\cC}{{\mbox{\boldmath$C$}}}
\newcommand{\dD}{{\mbox{\boldmath$D$}}}
\newcommand{\sfd}{{\sf d}}
\newcommand{\sfe}{{\sf e}}
\newcommand{\sfC}{{\sf C}}
\newcommand{\restr}[1]{\lower3pt\hbox{$|_{#1}$}}
\newcommand{\down}{\downarrow}              %frecce in su e in giu nei limiti
\newcommand{\up}{\uparrow}
\newcommand{\eps}{\varepsilon}  
\newcommand{\nchi}{{\raise.3ex\hbox{$\chi$}}}
\newcommand{\weakto}{\rightharpoonup}
\newtheorem{theorem}{Theorem}[section]
\newtheorem{corollary}[theorem]{Corollary}
\newtheorem{lemma}[theorem]{Lemma}
\newtheorem{proposition}[theorem]{Proposition}
\newtheorem{definition}[theorem]{Definition}
\newtheorem{remark}[theorem]{Remark}
\newcommand{\dist}{\mathrm{dist}}
\newcommand{\LIP}{\mathrm{LIP}}
\newcommand{\Lip}{\mathrm{Lip}}
\newcommand{\lip}{\mathrm{lip}}
\newcommand{\diam}{\mathrm{diam}}
\newcommand{\fr}{\hfill$\blacksquare$}   %quadratino nero alla fine del remark, se non vi piace, la cosa migliore e' `svuotare' la macro, cosi' non bisogna intervenire sul testo
\newcommand{\res}{\mathop{\hbox{\vrule height 7pt width .5pt depth 0pt
\vrule height .5pt width 6pt depth 0pt}}\nolimits} % macro per la restrizione
\newcommand{\CD}{{\sf CD}}
\newcommand{\mcp}{{\sf MCP}}
\renewcommand{\mm}{\mathfrak m}
\newcommand{\limi}{\varliminf}
\renewcommand{\limsup}{\varlimsup}
\renewcommand{\liminf}{\varliminf}
\renewcommand{\d}{{\rm d}}
\newcommand{\X}{{\rm X}}
\newcommand{\Z}{{\rm Z}}
\newcommand{\e}{\sfe}
\newcommand{\rmKe}{{\rm Ke}}
\newcommand{\rmCh}{{\rm Ch}}
\newcommand{\OptGeo}{{\rm OptGeo}}
\newcommand{\Xdm}{(\X,\sfd,\mm)}
\newcommand{\Xdmx}{(\X,\sfd,\mm,x)}
\newcommand{\Xdmxinf}{(\X_\infty,\sfd_\infty,\mm_\infty,x_\infty)}
\newcommand{\Xdmxn}{(\X_n,\sfd_n,\mm_n,x_n)}
\newcommand{\Comp}{{\rm Comp}}
\newcommand{\supp}{{\rm supp}}
\title[]{Mosco-convergence of Cheeger energies on varying spaces satisfying curvature dimension conditions}
\author[]{Francesco Nobili} 
\address{Università di Napoli Federico II, Dipartimento di Matematica, Via Cintia, Monte S. Angelo, 80126 Napoli (NA), Italy}
\email{\url{francesco.nobili@unina.it}}
\author[]{Federico Renzi} 
\address{Scuola Normale Superiore, Piazza dei Cavalieri 7, 56126 Pisa, Italy}
\email{\url{federico.renzi@sns.it}}
\author[]{Federico Vitillaro} 
\address{Scuola Normale Superiore, Piazza dei Cavalieri 7, 56126 Pisa, Italy}
\email{\url{federico.vitillaro@sns.it}}
\begin{document}

\begin{abstract} 
We study the Mosco-convergence of Cheeger energies on Gromov-Hausdorff converging spaces satisfying different types of curvature dimension conditions. The case of functions of bounded variation is also considered, and applications to the continuity of Neumann eigenvalues are obtained. Our method, covering possibly infinite dimensional settings, is based on a Lagrangian approach and combines the stability properties of Wasserstein geodesics with the characterization of the nonsmooth calculus in duality with test plans.
\end{abstract}

\maketitle
\tableofcontents

\section{Introduction}
A remarkable feature of the synthetic theory of Ricci curvature lower bounds is its robust compactness and stability under Gromov–Hausdorff type convergences. In these settings, the curvature bounds remain stable because the zeroth-order convergence of the metric measure structure is complemented by a uniform second-order control. Once this framework is established, it is possible to investigate the stability of key quantities in geometric analysis such as heat flows \cite{Gigli10,GMS15}, heat kernels \cite{AmbrosioHondaTewodrose18}, regular Lagrangian flows \cite{AmbrosioStraTrevisan17}, as well as various functional constants and geometric stability properties, see e.g.\ \cite{Honda15,Honda17,AmbrosioHonda17,AmbrosioHondaPortegies18,HondaKettererMondelloPeralesRigoni24,NobiliViolo22,NobiliViolo24,NobiliViolo24_PZ,Nobili24_overview}. We refer also to \cite{Gigli23_working} for more details.

Starting from \cite{AmbrosioHonda17}, stability properties up to codimension one have been established, leading to significant advances in the understanding of the codimension-one structure of spaces with Ricci lower bounds \cite{AmbrosioBrueSemola19,BruPasSem19,BruPasSem21-constantcodimension}. These results have also proved effective in addressing isoperimetric problems on noncompact manifolds \cite{AntonelliBrueFogagnoloPozzetta22,AntonelliPasqualettoPozzettaSemola25}. We refer to \cite{Pozzetta23} for an overview. 

\medskip

In this manuscript, we study first-order stability properties of spaces satisfying the curvature dimension condition $\CD(K,N)$ of \cite{Lott-Villani09} and \cite{Sturm06I,Sturm06II} or the measure contraction property $\mcp(K,N)$ of \cite{Ohta07} and \cite{Sturm06II} both requiring, for some $K\in\R,N\in [1,\infty)$, a metric measure space $\Xdm$ to have Ricci curvature bounded below by $K$ and dimension bounded above by $N$, in a synthetic sense. The underlying relevant notion of convergence will be that of pointed measured Gromov convergence (pmG), under which these conditions are known to be stable \cite{Gromov07,Sturm06I,Sturm06II,Lott-Villani09,Ohta07,GMS15}. Here we adopt the so-called extrinsic approach \cite{GMS15}, which was studied in connection with previous notions of convergence.

Our main goal is to investigate the stability of the nonsmooth Sobolev and BV calculus along sequences of pmG-converging spaces. Recall that for a metric measure space $\Xdm$ and $f\in L^p(\mm)$, the membership $f\in W^{1,p}(\X)$ is characterized by the finiteness of the $p$-Cheeger energy $\rmCh_p(f)$, obtained via relaxation from Lipschitz functions. Similarly, for $f\in L^1(\mm)$, the property $f\in BV(\X)$ is defined by the finiteness of the total variation $|\dD f|(\X)$ (see Section \ref{sec:prelim} for details and references). Consider a sequence of pointed metric measure spaces $\Xdmxn$  that is pmG-converging to $\Xdmxinf$ (see Definition \ref{def:pmGH}), which we shortly write
\[
\X_n \overset{pmG}{\to} \X_\infty.
\]
When the spaces are uniformly bounded, we simply speak about measured Gromov convergence. For functions $f_n\in L^p(\mm_n)$ with $p\in[1,\infty)$, there are natural notions of $L^p$-weak convergence to a limit $f_\infty\in L^p(\mm_\infty)$ (see Definition \ref{def:Lp convergence pmGH}). A natural question is whether the following semicontinuity property holds:
\begin{equation}
\begin{array}{lll}
   f_n \to f_\infty \quad L^p\text{-weak}& &\sup_{n\in \N}\rmCh_p(f_n)<\infty\\
   f_n \to f_\infty \quad L^1\text{-weak} & &\sup_{n\in \N}|\dD f_n|(\X_n)<\infty
\end{array}\Big\}\overset{?}{\implies} \begin{array}{l}
f_\infty \in W^{1,p}(\X_\infty),\\
f_\infty \in BV(\X_\infty),
\end{array}\label{intro:semicontinuity}
\end{equation}
and when this is true in the sharp form
\begin{equation}
\rmCh_p(f_\infty)\le \liminf_{n\uparrow \infty}\rmCh_p(f_n),\qquad |\dD f_\infty|(\X_\infty)\le \liminf_{n\uparrow \infty}|\dD f_n|(\X_n).\label{intro:gammaliminf}
\end{equation}
The above property, when valid, fits in the general framework of the Mosco-convergence of Cheeger energies (see \cite{Gigli23_working}). Indeed, we recall that the existence of the so-called \emph{recovery sequences} is always true without further assumptions, namely for every $f_\infty\in L^p(\mm_\infty)$ there are $f_n \in L^p(\mm_n)$ which are $L^p$-strong converging to $f_\infty$ and satisfies (depending on $p>1$ or $p=1$):
\[
    \limsup_{n\uparrow \infty}\rmCh_p(f_n) \le \rmCh_p(f_\infty),\qquad \limsup_{n\uparrow \infty}|\dD f_n|(\X_n)\le  |\dD f_\infty|(\X_\infty).
\]
To the best of our knowledge, this fact was explicitly pointed out for the first time in \cite[Theorem 4.4]{Gigli23_working}, whose proof depends on the upper semicontinuity of the asymptotic Lipschitz constant (see also the proof of \cite[Theorem 8.1]{AmbrosioHonda17}, as well as the general analysis in \cite{AmbrosioGigliSavare11-3}). Given the importance of this result for our goals, we single out a statement in Theorem \ref{th:limsup}.

On the contrary, even though \eqref{intro:gammaliminf} is always true when the underlying space is fixed, in this varying base-space setting it might very well be false.  Notable counterexamples are given by passage to the limit from discrete to continuum, while homogeneization results also provide standard constructions even with uniform doubling and Poincar\'e conditions. In these cases, the issue is that we are asking for a first-order stability to hold under a zeroth-order convergence.

\medskip
The main goal of this note is the following:
\begin{center}
to study \eqref{intro:semicontinuity},\eqref{intro:gammaliminf} assuming the $\CD(K,N)$ or $\mcp(K,N)$ condition. 
\end{center}
Notice that this investigation is at least reasonable, as we are enforcing a uniform second-order curvature bound along a sequence of spaces to investigate a first-order stability.

\medskip

\noindent\textbf{Statement of the main results}.
We now present our main results.  For technical reasons appearing in different parts of this note, we also require the essentially non-branching property (see Section \ref{sec:prelim}). 
\begin{theorem}\label{thm:Mosco in CDKN}
   Let $\Xdmxn$ with $\mm_n(\X_n)<\infty$ be $q$-essentially non-branching pointed metric measure spaces for all $q \in (1,\infty),n\in\N$. Suppose that $\X_n$ satisfies ${\sf  CD}(K,N)$ for some $K\in\R,N\in[1,\infty)$ and that $\X_n \overset{pmG}{\rightarrow} \X_\infty$ for some $\Xdmxinf$. Then, it holds
\begin{itemize}
\item[(i)] for all $p\in(1,\infty)$, if $f_n\in L^p(\mm_n)$ converges $L^p$-weak to $f_\infty \in L^p(\mm_\infty)$ we have
\[
 \rmCh_p(f_\infty)\le \liminf_{n\to\infty}\rmCh_p(f_n);
\]
\item[(ii)] if $f_n\in L^1(\mm_n)$ converges $L^1$-weak to $f_\infty \in L^1(\mm_\infty)$, we have
\[
 |\dD f_\infty|(\X_\infty)\le \liminf_{n\to\infty}|\dD f_n|(\X_n).
\]
\end{itemize} 
\end{theorem}
Except when $p = 2$, obtained in \cite{GMS15}, the above result is new for all other exponents. For its proof, a major role is played by the result \cite{ACCMcS20} about the independence of $\CD_q$ condition of the transport exponent $q\in(1,\infty)$ (reported in Theorem \ref{thm:CDq independent} below). This makes it possible to treat all the $p$-Cheeger and BV-energies together, only assuming working with the $\CD = \CD_2$ condition. As a byproduct, we also have to assume the finiteness of the reference measures $\mm_n$ since this is an assumption in \cite{ACCMcS20}. Our statement would automatically extend to general $\sigma$-finite reference measures as soon as \cite{ACCMcS20} works in this case.  

In contrast, $\mm_\infty$ can be also $\sigma$-finite, therefore the above result can be certainly applied for non-compact limit spaces $\X_\infty$ arising from sequences of compact ones (for instance, as blow-ups). Furthermore, conclusion (i) is directly implied by an analogous result in  Theorem \ref{thm:Mosco in CDKinf} holding in a genuine infinite dimensional setting, up to technical assumptions fulfilled in Theorem \ref{thm:Mosco in CDKN} thanks to \cite{ACCMcS20}.

\medskip 
We next turn to the analogous result for spaces satisfying the measure contraction property.
\begin{theorem}\label{thm:Mosco in MCP}
Let $\Xdmxn$ be a sequence of $q$-essentially non-branching pointed metric measure spaces for all $q\in(1,\infty),n\in\N$. Suppose $\X_n$ satisfies ${\sf MCP}(K,N)$ for some $K \in \R, N \in [1,\infty)$ and that $\X_n \overset{pmG}{\rightarrow} \X_\infty$ for some $\Xdmxinf$. Then, it holds
\begin{itemize}
\item[(i)] for all $p\in(1,\infty)$, if $f_n\in L^p(\mm_n)$ converges $L^p$-weak to $f_\infty \in L^p(\mm_\infty)$ we have
\[
 \rmCh_p(f_\infty)\le 2^N\liminf_{n\to\infty}  \rmCh_p(f_n);
\]
\item[(ii)] if $f_n\in L^1(\mm_n)$ converges $L^1$-weak to $f_\infty \in L^1(\mm_\infty)$, we have
\[
 |\dD f_\infty|(\X_\infty)\le 2^N\liminf_{n\to\infty}|\dD f_n|(\X_n).
\]
\end{itemize}
\end{theorem}
The above is the first stability results for Cheeger energies along converging spaces satisfying uniform measure contraction properties. Unlike Theorem \ref{thm:Mosco in CDKN}, here we do not require the reference measures to be finite. In particular, the result also applies to ${\sf CD}(K,N)$ spaces, since these satisfy ${\sf MCP}(K,N)$. The reason is that the measure contraction property is already independent of the transport exponent (see \cite[Remark 5.2]{GigliNobili22}). This completely bypasses the use of \cite{ACCMcS20} and allows simultaneous treatment of all $p$-Cheeger and BV energies. The drawback is the appearance of a multiplicative factor $2^N$, that is we still obtain \eqref{intro:semicontinuity}, though not the sharp form \eqref{intro:gammaliminf}. This loss stems from the weaker interpolation estimates available under ${\sf MCP}$ (see \cite{CavMon17} and Section \ref{sec:mcp}).

\begin{remark}\label{rem:branch}
    \rm We point out that in both Theorem \ref{thm:Mosco in CDKN} and Theorem \ref{thm:Mosco in MCP}, the limit space $(\X_\infty,\sfd_\infty,\mm_\infty)$ can be also branching. In view of applications (see for instance Theorem \ref{thm:neumann}), this is highly desirable as the essential non-branching property is known to be not stable within the ${\sf CD}$ or ${\sf MCP}$ classes.
    \fr
\end{remark}

\medskip 
 It is well known that Mosco-convergence results for Cheeger energies and Dirichlet forms have deep implications in the analysis and geometry of spaces with Ricci curvature lower bounds. We refer to the aforementioned references and the upcoming paragraphs for some relevant literature. As a byproduct of our results, we obtain the continuity of Neumann eigenvalues in the measured Gromov convergence. Recall that for every $p \in (1,\infty)$, the first (nontrivial) Neumann eigenvalue of the $p$-Laplacian in a metric measure space $\Xdm$ is defined as the nonnegative quantity
\[
    \lambda_p(\X) \coloneqq \inf \left\{ \frac{\rmCh_p(f)}{\|f\|^p_{L^p(\mm)}} \colon f \in W^{1,p}(\X),\,\int f|f|^{p-2}\, \d \mm =0\right\}.
\]
\begin{theorem}\label{thm:neumann}
   Let $(\X_n,\sfd_n,\mm_n)$ be $q$-essentially non-branching metric measure spaces satisfying ${\sf CD}(K,N)$ for some $K \in \R,N\ge 1$ and for all $q\in(1,\infty),n\in\N$. Suppose that $\sup_{n\in\N} {\rm diam}(\X_n)<\infty$ and that $\X_n \overset{pmG}{\to}\X_\infty$ for some  $(\X_\infty,\sfd_\infty,\mm_\infty)$. Then, for all $p\in(1,\infty)$, it holds
    \begin{equation}
        \lambda_p(\X_\infty) = \lim_{n\to \infty} \lambda_p(\X_n).
    \label{eq:neumann continuous}
    \end{equation}
\end{theorem}
The above for $p=2$ is implied by the analysis in \cite{GMS15}, and for general $p\in(1,\infty)$ by that in \cite{AmbrosioHonda17} but only in the restricted class of ${\sf RCD}$-spaces (see below for bibliographic references).  Therefore, in the non-branching ${\sf CD}$ class our result is new for all $p\neq 2$. We also note that, when $K>0$, the assumption on the diameters is automatically satisfied by the maximal diameter theorem (\cite{Sturm06II}).

\medskip

\noindent\textbf{Strategy of the proof}. 
The main proof-argument combines a Lagrangian characterization of Sobolev and BV functions, in the spirit of
\cite{AmbrosioGigliSavare11,AmbrosioGigliSavare11-3,AmbrosioDiMarino14},
with polygonal interpolation arguments similar to \cite{Lisini07}, suitably adapted to preserve precise density estimates. To illustrate our methods, we consider  the $2$-Cheeger energy and the ${\sf CD}(K,\infty)$ setting with $K\ge 0$ as it contains all the key ideas. In particular, we show that infinite dimensional settings are covered as well. Afterwards, we comment on the general case $K\in\R$, which is substantially more involved.
\medskip 

The first ingredient, that will be proved in Theorem \ref{thm:Sobolevintegrated}, is that on an arbitrary metric measure space $\Xdm$ and $f \in L^2(\mm)$, we have $f\in W^{1,2}(\X)$ if and only if there is $C\ge 0$ so that
\begin{equation}
\int f(\gamma_1)-f(\gamma_0)\,\d \eta \le \Comp(\eta)^{1/2}\rmKe_2^{1/2}(\eta)C,
\label{intro:claim}
\end{equation}
for all test plans $\eta \in \PP(C([0,1],\X))$ (see Section \ref{sec:prelim} for the related notation). In this case, we have
\[
\rmCh^{1/2}_2(f) = \min \{C  \colon \eqref{intro:claim} \text{ holds with }C\ge 0\}.
\]
Consider now a sequence $\Xdmxn$ of pointed $\CD(0,\infty)$ spaces pmG-converging to some limit $\Xdmxinf$. Fix also $f_n \in L^2(\mm_n)$ that is $L^2$-weak converging to some $f_\infty \in L^2(\mm_\infty)$.  For all test plans $\eta$ with bounded support (without loss of generality, see Remark \ref{rmk:planboundedW1p}), it is possible to find optimal dynamical plans $\pi_n \in \PP(C([0,1],\X_n))$ satisfying
\begin{equation}
\limsup_{n\uparrow\infty}\Comp(\pi_n)\le \Comp(\eta), \qquad \limsup_{n\uparrow\infty }\rmKe_2(\pi_n)\le \rmKe_2(\eta),
\label{intro:CompKe principles}
\end{equation}
while satisfying the end-point convergence
\begin{equation}\label{intro:endpoints}
\frac{\d (\e_0)_\sharp \pi_n}{\d \mm_n} \to  \frac{\d (\e_0)_\sharp \eta}{\d \mm_\infty},\qquad 
     \frac{\d (\e_1)_\sharp \pi_n}{\d \mm_n} \to \frac{\d (\e_1)_\sharp \eta}{\d \mm_\infty},\qquad \text{in $L^2$-strong as $n\uparrow\infty$}.
\end{equation}
In the first of \eqref{intro:CompKe principles}, it is crucial that $K\ge 0$, see Proposition \ref{prop:q_polygonal}. Since, eventually, $f_n \in W^{1,2}(\X_n)$, we have for all $n$ big enough  
\[
\int f_n(\gamma_1)-f_n(\gamma_0)\,\d\pi_n \overset{\eqref{intro:claim}}{\le} \Comp(\pi_n)^{1/2}\rmKe_2^{1/2}(\pi_n)\rmCh_2^{1/2}(f_n).
\]
Taking $n\uparrow\infty$ and using the properties \eqref{intro:CompKe principles},\eqref{intro:endpoints}, the above implies, using again \eqref{intro:claim}, that
\[
f_\infty \in W^{1,2}(\X_\infty)\qquad \text{and}\qquad \rmCh_2(f_\infty) \le \liminf_{n\uparrow\infty}\rmCh_2(f_n).
\]

The case when $K < 0$ is instead much harder and presents some technical difficulties to be dealt with. In this case, a polygonal geodesic plan rather than a single dynamical plan must be built and shown to satisfy suitable modifications of \eqref{intro:CompKe principles} and \eqref{intro:endpoints}. This set of ideas was already faced in \cite{GigliNobili22,NobiliPasqualettoSchultz22} in the fixed base-space setting and strongly inspired by \cite{Lisini07}. However, we point out that our construction presents some novelties and new difficulties that are faced for the first time. The main challenge we face is the need to discard mass along the geodesic interpolations by restricting to properly chosen geodesics with maximal length under control. Unfortunately, this procedure does not yet guarantee the validity of \eqref{intro:endpoints} and, therefore, we need to simultaneously refine the polygonal construction by increasing the number of interpolants. In this way, the key properties \eqref{intro:CompKe principles},\eqref{intro:endpoints} will be asymptotically recovered (see Proposition \ref{prop:q_polygonal}, Proposition \ref{prop:infty_polgeo} for the polygonal constructions and Remark \ref{rem:polgeo vs old polgeo} for comparison with previous works).  

\medskip

\noindent\textbf{Comparison with previous literature}. One of the first motivations to investigate the kind of stability results of this note came from the celebrated theory of Ricci limit spaces \cite{Cheeger-Colding96}. Indeed, in \cite{Cheeger-Colding97III} the stability of the Laplacian spectrum was studied along convergence of manifolds with Ricci curvature lower bounds, solving a conjecture in \cite{Fukaya87}. 

In the settings and with the formalism adopted in this note, the Mosco-convergence of the $2$-Cheeger energies on varying $\CD(K,\infty)$ spaces was first faced in \cite{GMS15} (also motivated by \cite{Cheeger-Colding97III}). We point out that a key point is played by the identification proved in \cite{AmbrosioGigliSavare11} between the Fisher-information, i.e.\ the squared negative slope of the Entropy functional (see \eqref{eq:Entm}), and the $2$-Cheeger energy:
\begin{equation}
|\partial^- {\rm Ent}_\mm|^2(\rho\mm) = 4\rmCh_2(\sqrt{\rho}). \label{eq:Fisher Cheeger}
\end{equation}
This transfers information from a Lagrangian level to an Eulerian Level: the Mosco-convergence of the $2$-Cheeger energy is indeed implicitly achieved in \cite[Theorem 6.8]{GMS15} as a by-product of the analogous stability results for the Fisher information.  The relevant notion of convergence, and the stability of the heat flow, were originally clarified in \cite{Gigli10}.

Later, in \cite{AmbrosioHonda17}, the results of \cite{GMS15} were extended in the form of Gamma-convergence, and of Mosco-convergence under additional hypotheses (recently removed in \cite{NobiliViolo25} for finite dimensional settings) to the whole range of $p$-Cheeger energies and up to the case of BV-functions. However, in \cite{AmbrosioHonda17} it is crucial to work in the Riemannian sub-class of the so-called ${\sf RCD}(K,\infty)$ spaces \cite{AmbrosioGigliSavare11-2} (see \cite{AmbICM} and references therein). This was possible by exploiting the self-improvement properties of heat flow \cite{Savare13}.

More recently, in \cite{CarronMondelloTewodrose21,CarronMondelloTewodrose22} a Mosco-convergence type result was deduced for sequences of manifolds with Kato bounds on the Ricci curvature. This was achieved with a set of different techniques relying on stability properties of Dirichlet spaces (\cite{Kasue05,KuwaeShioya03}).

In this note, we follow a direct Lagrangian approach, avoiding the use of \eqref{eq:Fisher Cheeger} and, more generally, of any Eulerian viewpoint in the main proof-argument until the point where the Lagrangian formulation of the nonsmooth calculus is proved to be equivalent to a relaxed notion (cf.\ \eqref{intro:claim}). For this equivalence, the bridge between the Lagrangian and Eulerian viewpoint is naturally played by the so-called Kuwada's Lemma (see \cite{AmbrosioGigliSavare11-3}).

Since the curvature dimension conditions are formulated in terms of the convexity of entropies of Wasserstein geodesics, our approach has the advantage of being direct and conceptually simple. Thanks to our methods, we are able to achieve Mosco-convergence stability results covering a wide class of possible non-Riemannian spaces. Moreover, our strategy has the potential to deal with local curvature constraints, as it ultimately looks at interpolations in bounded regions. On this note, we show in Proposition \ref{prop:liminf on open} and Proposition \ref{prop:liming BV on open} that it can be localized on open subsets, while in the forthcoming work \cite{AmbrosioNobiliRenziVitillaro26} we shall rigorously verify that uniformly local Riemannian Ricci curvature lower bounds are stable. Finally, as finite-dimensional ${\sf RCD}$-spaces are non-branching (\cite{Deng25}), all our results also apply in this case. 

%%%%%%%%%%%%%%%%% 
\section{Preliminaries}\label{sec:prelim} 

\subsection{Test plans}
A metric measure space is a triple \((\X,\sfd,\mm)\), where
\[\begin{split}
(\X,\sfd)&\quad\text{ is a complete, separable metric space},\\
\mm\geq 0&\quad\text{ is a boundedly-finite Borel measure on }(\X,\sfd).
\end{split}\]

We denote by \(\MM_+(\X),\mathscr P(\X)\) respectively the family of Borel nonnegative measures, and Borel probability
measures on \((\X,\sfd)\) equipped with the weak topology in duality with continuous and bounded functions \(C_b(\X)\), which we shall often write $\mu_n \weakto \mu$ for $\mu_n,\mu \in \PP(\X)$. Given $\varphi \colon \X \to {\rm Y}$, for a metric target ${\rm Y}$, and $\mu \in \PP(\X)$, we define $\varphi_\sharp \mu (B) \coloneqq \mu(\varphi^{-1}(B))$, for all $B\subset {\rm Y}$ Borel.

Denote $\mathcal L^1$ the one-dimensional Lebesgue measure restricted to $[0,1]$. Denote by $C([0,1],\X)$ the set of continuous and $\X$-valued curves equipped with the sup distance. For \(q\in[1,\infty]\), denote by
\(AC^q([0,1],\X)\) the family of continuous curves
\(\gamma\in C([0,1],\X)\) so that there is \(g\in L^q(0,1)\) satisfying
\begin{equation}\label{eq:def_AC_curve}
\sfd(\gamma_t,\gamma_s)\leq\int_s^t g(r)\,\d r,
\qquad \forall s,t\in[0,1]\text{ with }s\leq t.
\end{equation}
We simply denote $AC([0,1],\X)\coloneqq AC^1([0,1],\X)$ and ${\rm LIP}([0,1],\X)\coloneqq AC^\infty([0,1],\X)$. It is well known that, given \( \gamma\in AC^q([0,1],\X)\) there exists
\(|\dot\gamma_t|\coloneqq\lim_{h\to 0}\sfd(\gamma_{t+h},\gamma_t)/|h|\) for a.e.\ \(t\in[0,1]\). It holds that  $|\dot\gamma| \in L^q(0,1)$ and it can be equivalently characterized as the minimal a.e.\ \(g\in L^q(0,1)\) satisfying \eqref{eq:def_AC_curve} (see \cite[Theorem 1.1.2]{AmbrosioGigliSavare08}).
It will be convenient to define the metric speed functional
\({\sf ms}\colon C([0,1],\X)\times[0,1]\to[0,+\infty]\) as follows:
\[
{\sf ms}(\gamma,t)\coloneqq|\dot\gamma_t|,
\quad\text{ whenever }\gamma\in AC([0,1],\X)\text{ and }
\exists\lim_{h\to 0}\frac{\sfd(\gamma_{t+h},\gamma_t)}{|h|},
\]
and \({\sf ms}(\gamma,t)\coloneqq+\infty\) otherwise.
It holds that \(\sf ms\) is a Borel function (see, e.g., \cite{GP19}). We will often consider probability measures $\pi \in \PP(C([0,1],\X))$ and call any such $\pi$ bounded or of bounded support, provided $\{\gamma_t \colon \gamma \in \supp(\pi),t \in [0,1]\}\subset \X$ is bounded. 

We recall the notion of test plan \cite{AmbrosioGigliSavare11,AmbrosioGigliSavare11-3}.
\begin{definition}[$q$-test plan]\label{def:test_plan}
Let \((\X,\sfd,\mm)\) be a metric measure space and \(q\in[1,\infty]\). A measure \(\pi\in\mathscr P\big(C([0,1],\X)\big)\) is a $q$-test plan, provided
\begin{subequations}\begin{align}\label{eq:def_test_plan1}
&\exists\,{\rm C}>0:\quad\forall t\in[0,1],\quad(\e_t)_\sharp\pi\leq{\rm C}\mm,\\
\label{eq:def_test_plan2}
&\|{\sf ms}\|_{L^q(\pi\otimes\mathcal L^1)}<+\infty.
\end{align}\end{subequations}
The minimal constant $C\ge 0$ for \eqref{eq:def_test_plan1} to hold is called the compression constant and denoted by \({\rm Comp}(\pi)\). If  $q\in(1,\infty)$, the kinetic energy of a $q$-test plan $\pi$ is defined by
\[
{\rm Ke}_q(\pi)\coloneqq \|{\sf ms}\|_{L^q(\pi\otimes\mathcal L^1)}^q.
\]
Similarly, the Lipschitz constant of an $\infty$-test plan $\pi$ is defined by 
\[
    \Lip(\pi)\coloneqq \|{\sf ms}\|_{L^\infty(\pi\otimes\mathcal L^1)}.
\]
\end{definition}

Notice that any \(q\)-test plan must be concentrated on \(AC^q([0,1],\X)\). The kinetic \(q\)-energy can be extended to a functional
\({\rm Ke}_q\colon\mathscr P\big(C([0,1],\X)\big)\to[0,+\infty]\)
by declaring that \({\rm Ke}_q(\pi)\coloneqq+\infty\) whenever \(\pi\)
is not a \(q\)-test plan. Similarly, for $q=\infty$, we extend $\Lip \colon\mathscr P\big(C([0,1],\X)\big)\to[0,+\infty]$ by declaring that $\Lip(\pi)=\infty$ if \(\pi\)
is not an \(\infty\)-test plan. Notice that $\Lip(\pi)<\infty$ implies that $\pi$ is concentrated on $\Lip([0,1],\X)$ and \({\rm Lip}(\pi)\) can be equivalently characterized as the minimal \({\rm L}\geq 0\) such that \(\pi\) is concentrated on \({\rm L}\)-Lipschitz curves (see \cite[Remark 2.2]{NobiliPasqualettoSchultz22}).

We recall next the following semicontinuity result from \cite[Proposition 2.4]{NobiliPasqualettoSchultz22} (notice that the reference measure is never needed as an assumption there): given $q_k\uparrow\infty$ and \((\pi_k)\subset
\mathscr P\big(C([0,1],\X)\big)\) with \(\pi_k\rightharpoonup\pi\)
as \(k\uparrow\infty\) in duality with $C_b(\X)$, then
\begin{equation}\label{eq:Mosco Keq Lip}
{\rm Lip}(\pi)\leq\limi_{k\to\infty}{\rm Ke}_{q_k}^{1/q_k}(\pi_k).
\end{equation}

We recall the definition of evaluation map at time $t \in [0,1]$: given $\gamma \in C([0,1],\X)$, we define $\e_t(\gamma)\coloneqq \gamma_t$. Notice that $\e_t \colon  C([0,1],\X) \to \X$ is continuous. We conclude this part by recalling a compactness result for $\infty$-test plans we are going to use in this note. See \cite[Proposition 2.6]{NobiliPasqualettoSchultz22}.
\begin{proposition}\label{prop:infty_tightness}
Let \((\X,\sfd,\mm)\) be a metric measure space. Let \((\pi_n)_n\)
be a sequence of \(\infty\)-test plans such that \(\bigcup_{n}{\rm spt}\big((\e_0)_\sharp\pi_n\big)\)
is bounded, \(\sup_{n}{\rm Lip}(\pi_n)<+\infty\),
and \(\sup_{n}{\rm Comp}(\pi_n)<+\infty\).
Then there exist a subsequence $n_k\uparrow\infty$ and a \(\infty\)-test plan \(\pi\) such that \(\pi_{n_k}\rightharpoonup\pi\)
as \(k\uparrow\infty\).
\end{proposition}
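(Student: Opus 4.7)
The plan is to verify uniform tightness of $(\pi_n)$ in $\PP(C([0,1],\X))$, apply Prokhorov's theorem on this Polish space to extract a weakly convergent subsequence $\pi_{n_k}\rightharpoonup\pi$, and then transfer the defining bounds of an $\infty$-test plan to the limit by standard semicontinuity arguments. Set $L\coloneqq\sup_n\Lip(\pi_n)$, $C\coloneqq\sup_n\Comp(\pi_n)$, and fix a bounded set $B\subset\X$ containing $\bigcup_n\supp((\e_0)_\sharp\pi_n)$. Since each $\pi_n$ is concentrated on $L$-Lipschitz curves with $\gamma_0\in B$, it is also concentrated on curves taking values in $\tilde B\coloneqq\{x\in\X:\dist(x,B)\le L\}$, which is bounded and therefore satisfies $\mm(\tilde B)<\infty$.

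To establish tightness, I would fix $\eps>0$ and a countable dense set $\{t_j\}_{j\ge 1}\subset[0,1]$. Because $C\mm\res\tilde B$ is a finite Borel measure on the Polish space $\X$, Ulam's theorem produces compact sets $K_j\subset\tilde B$ with $C\mm(\tilde B\setminus K_j)<\eps\,2^{-j-1}$. Combined with $(\e_{t_j})_\sharp\pi_n\le C\mm$ and $(\e_{t_j})_\sharp\pi_n(\tilde B)=1$, this yields $(\e_{t_j})_\sharp\pi_n(\X\setminus K_j)<\eps\,2^{-j-1}$ for all $n,j$. Define
\[
H_\eps\coloneqq\bigl\{\gamma\in C([0,1],\X)\,:\,\Lip(\gamma)\le L\text{ and }\gamma_{t_j}\in K_j\text{ for every }j\ge 1\bigr\},
\]
which is closed. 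Its elements are $L$-Lipschitz, hence equicontinuous, and for each $t_j$ the evaluation $\{\gamma_{t_j}:\gamma\in H_\eps\}$ lies in the compact $K_j$; by equicontinuity and density of $\{t_j\}$, the set $\{\gamma_t:\gamma\in H_\eps\}$ is totally bounded in the complete space $\X$ for every $t\in[0,1]$. Arzelà-Ascoli then yields compactness of $H_\eps$ in $C([0,1],\X)$, while the union bound
\[
\pi_n\bigl(C([0,1],\X)\setminus H_\eps\bigr)\le\sum_{j\ge 1}(\e_{t_j})_\sharp\pi_n(\X\setminus K_j)<\eps
\]
holds uniformly in $n$, proving tightness.

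Prokhorov then provides the weakly convergent subsequence $\pi_{n_k}\rightharpoonup\pi$ in $\PP(C([0,1],\X))$, and it only remains to verify that $\pi$ is an $\infty$-test plan. The set $\{\gamma:\Lip(\gamma)\le L\}$ is closed in $C([0,1],\X)$, so Portmanteau gives $\pi(\Lip(\gamma)\le L)\ge\limsup_k\pi_{n_k}(\Lip(\gamma)\le L)=1$ and hence $\Lip(\pi)\le L<\infty$. For the compression bound, fix $t\in[0,1]$: by continuity of $\e_t$ we have $(\e_t)_\sharp\pi_{n_k}\rightharpoonup(\e_t)_\sharp\pi$; applying Portmanteau on open sets $U$ together with the uniform inequality $(\e_t)_\sharp\pi_{n_k}\le C\mm$ yields $(\e_t)_\sharp\pi(U)\le C\mm(U)$, which propagates to all Borel sets by outer regularity of the involved measures, giving $\Comp(\pi)\le C$.

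The delicate point will be the tightness step in a possibly non-proper ambient space: the closed $L$-neighborhood of a compact set need not be compact in infinite-dimensional $\X$, so tightness at the single time $t=0$ is insufficient to run Arzelà-Ascoli. The crucial workaround is to exploit that $\Comp(\pi_n)\le C$ controls the densities at \emph{every} time simultaneously, so Ulam's theorem can be applied at each $t_j$ along a countable dense set to produce genuine compact slices; the $L$-Lipschitz equicontinuity then interpolates relative compactness to all $t\in[0,1]$.
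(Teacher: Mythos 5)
Your proof is correct, and it follows what is essentially the standard route: the paper itself does not reprove this proposition but cites \cite{NobiliPasqualettoSchultz22}, and your combination of Ulam's theorem at a countable dense set of times with the uniform Lipschitz bound to build compact subsets of $C([0,1],\X)$ via Arzel\`a--Ascoli, followed by Prokhorov and then Portmanteau to transfer the Lipschitz and compression constraints to the weak limit, is exactly this argument. The one step that deserves to be spelled out a bit more is the passage from $(\e_t)_\sharp\pi(U)\le C\,\mm(U)$ on open sets to the Borel inequality $(\e_t)_\sharp\pi\le C\,\mm$: here one should note that $(\e_t)_\sharp\pi$ is concentrated on the bounded set $\tilde B$ (so it suffices to compare on Borel subsets of $\tilde B$, where both measures are finite) and that a boundedly finite Borel measure on a Polish space is Radon, hence outer regular; alternatively, one may avoid regularity altogether by testing the inequality against nonnegative continuous functions with bounded support and concluding that $C\,\mm-(\e_t)_\sharp\pi$ is a nonnegative measure.
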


\subsection{Wasserstein space}
We recall some basic facts around Optimal Transport on a metric space $(\X,\sfd)$. We refer to \cite{Villani09,AmbrosioBrueSemola24_Book} and references therein for a complete account of the theory.

A measure \(\alpha\in\mathscr \MM_+(\X\times\X)\)
is called admissible plan between two measures \(\mu_0,\mu_1 \in \MM_+(\X)\) with the same mass provided
\(P^1_\sharp\alpha=\mu_0\) and \(P^2_\sharp\alpha=\mu_1\), where we denoted
\(P^1,P^2\colon\X\times\X\to\X\) the projection maps onto the first and second components, respectively. We denote by \({\rm Adm}(\mu_0,\mu_1)\) the family of admissible plans between $\mu_0$ and $\mu_1$. For $q<\infty$, denote $\PP_q(\X) \subset \PP(\X)$ the family of probability measures with finite $q$-moment and define the \(q\)-Wasserstein distance 
\begin{equation}\label{eq:def_Wq}
W_q^q(\mu_0,\mu_1)\coloneqq\inf_{\alpha\in{\rm Adm}(\mu_0,\mu_1)}
\int\sfd^q(x,y)\,\d\alpha(x,y)
,\qquad \forall
\mu_0,\mu_1\in\mathscr P_q(\X).
\end{equation}
In the limit case \(q=\infty\), we denote by $\PP_\infty(\X)\subset \PP(\X)$ the family of boundedly supported probability measures and define the \(\infty\)-Wasserstein distance
\(W_\infty\)  as
\begin{equation}\label{eq:def_Winfty}
W_\infty(\mu_0,\mu_1)\coloneqq\inf_{\alpha\in{\rm Adm}(\mu_0,\mu_1)}\, \|\sfd(\cdot,\cdot)\|_{L^\infty(\alpha)},\qquad \forall \mu_0,\mu_1\in\mathscr P_\infty(\X).
\end{equation}
It is well-known (see, e.g., \cite{GivSho84,ChaDeP08}) that $W_\infty$ is the monotone limit of the $q$-Wasserstein distance:
\begin{equation}\label{eq:lim_Wq}
W_\infty(\mu_0,\mu_1)=\lim_{q\to\infty}W_q(\mu_0,\mu_1)= \sup_{q>1}W_q(\mu_0,\mu_1),
\qquad \forall \mu_0,\mu_1\in\mathscr P_\infty(\X) .
\end{equation}
We denote by \({\rm Opt}_q(\mu_0,\mu_1)\) the set of all optimal
plans between \(\mu_0\) and \(\mu_1\), i.e.\ of all minimisers
of \eqref{eq:def_Wq} or \eqref{eq:def_Winfty}. The couple $(\mathscr P_q(\X),W_q)$ is called the $q$-Wasserstein space.

We next introduce the so-called optimal dynamical plans following \cite{AmbrosioBrueSemola24_Book}. A geodesic in \((\X,\sfd)\) is a curve \(\gamma\in C([0,1],\X)\) satisfying \(\sfd(\gamma_t,\gamma_s)=|t-s|\,\sfd(\gamma_0,\gamma_1)\) for every \(t,s\in[0,1]\). We denote by \({\rm Geo}(\X)\) the (closed sub-) set of all
geodesics in \(C([0,1],\X)\). Given \(q\in[1,\infty]\) and
\(\mu_0,\mu_1\in\mathscr P_q(\X)\), set
\[
{\rm OptGeo}_q(\mu_0,\mu_1)\coloneqq\Big\{
\pi\in\mathscr P\big({\rm Geo}(\X)\big)\,:\,
(\e_i)_\sharp\pi=\mu_i\;\forall i=0,1,\,
(\e_0,\e_1)_\sharp\pi\in{\rm Opt}_q(\mu_0,\mu_1)\Big\}.
\]
The elements of \({\rm OptGeo}_q(\mu_0,\mu_1)\) are called
\(q\)-optimal dynamical plans between \(\mu_0\) and \(\mu_1\).
For $q \in (1,\infty]$ and \(\pi\in{\rm OptGeo}_q(\mu_0,\mu_1)\), it is well-known that \(t\mapsto(\e_t)_\sharp\pi \in \PP_q(\X)\) \ is a $W_q$-geodesic and\begin{subequations}\begin{align}\label{eq:dynOPq}
{\rm Ke}^{1/q}_q(\pi)=W_q(\mu_0,\mu_1),&\quad\text{ if }q<\infty,\\
\label{eq:dynOPinfty}
{\rm Lip}(\pi)=W_\infty(\mu_0,\mu_1),&\quad\text{ if }q=\infty.
\end{align}\end{subequations}
See \cite[Remark 2.11]{NobiliPasqualettoSchultz22} for a proof when $q=\infty$. From \eqref{eq:dynOPinfty}, we directly get
\begin{equation}\label{eq:Winf_admissible}
W_\infty\big((\e_{t_{i}})_\sharp \eta, (\e_{t_i+1})_\sharp \eta\big)  \le (t_{i+1}-t_i)\Lip(\eta),
\end{equation}
for every $\eta \in \PP((C[0,1],\X))$. We shall also deal with the so-called $M$-polygonal geodesic plans for $M\in \N$, i.e.\ $\pi \in \PP(C([0,1],\X))$ so that, for some $q\in(1,\infty]$ and $0=t_0<t_1<\ldots <t_M=1$, we have
\[
\left( {\sf rest}_{t_i}^{t_{i+1}}\right)_\sharp \pi \in {\rm OptGeo}_q( (\e_{t_i})_\sharp \pi, (\e_{t_i+1})_\sharp \pi),\qquad\forall i=0,...,M-1,
\]
where ${\sf rest}_t^s \colon C([0,1],\X) \to C([0,1],\X)$ is defined as ${\sf rest}_t^s(\gamma)(r) \coloneqq \gamma_{t(1-r) + sr}$. For later use and for $\pi$ as above, we recall (see \cite[Remark 3.4]{NobiliPasqualettoSchultz22}) the following identity
\begin{equation}\label{eq:Lippolygonal}
\Lip(\pi) = \max_i \frac{1}{t_{i+1}-t_i}\Lip\left(\left( {\sf rest}_{t_i}^{t_{i+1}}\right)_\sharp \pi \right).
\end{equation}
\subsection{Sobolev and BV calculus}
Denote the local Lipschitz constant of $f\colon \X\to\R$ as
\[ 
\lip \, f(x) \coloneqq \limsup_{y\to x}\frac{|f(y)-f(x)|}{\sfd(y,x)}
\]
set to $0$ if $x$ is isolated. We recall the definition of Sobolev space via relaxation, following \cite{AmbrosioGigliSavare11-3} (we also refer to \cite{Cheeger00,Shanmugalingam00} and to \cite{AmbrosioIkonenLucicPasqualetto24} for a complete discussion).
\begin{definition}[Sobolev space]\label{def:Chp}
Let $\Xdm$ be a metric measure space and $p \in (1,\infty)$. Define the $p$-Cheeger energy $ {\rm Ch}_p \colon L^p(\mm)\to [0,\infty]$ 
\[ {\rm Ch}_p(f) \coloneqq  \inf \Big\{ \liminf_{n\to\infty} \int \lip^p  f_n\, \d \mm \colon (f_n) \subset L^p(\mm)\cap \Lip(\X), f_n\to f \emph{ in } L^p(\mm) \Big\}.\]
The Sobolev space is defined as $W^{1,p}(\X)\coloneqq \{f \in L^p(\mm) \colon {\rm Ch}_p(f) <\infty\}$.
\end{definition}
We recall the following lower semicontinuity property
\begin{equation}\label{eq:W1psemicontinuous}
\begin{array}{l}
(f_n)\subset W^{1,p}(\X),\, f_n \to f \text{ in }L^p \\
 \liminf_{n\uparrow\infty} {\rm Ch}_p(f_n)<\infty
\end{array} \quad \Rightarrow \quad 
\begin{array}{l}
f \in W^{1,p}(\X) \\
 {\rm Ch}_p(f)\le \liminf_{n\uparrow\infty} {\rm Ch}_p(f_n),
\end{array} 
\end{equation}
and that there exists $| D f|_p \in L^p(\mm)$ so that
\[ {\rm Ch}_p(f) = \int |D f|_p^p\, \d \mm.\]
See \cite{AmbrosioGigliSavare11-3} for these claims. The subscript $p$ is intentional, as the object $|Df|_p$ may depend on $p$ \cite{DiMarinoSpeight13}. 

We recall a basic Leibniz rule and, since it is slightly non standard, we include a proof.
\begin{lemma}
    Let $p\in(1,\infty)$, let $\Xdm$ be a metric measure space and let $f,g \in W^{1,p}(\X)$. If $g,|Dg|_p \in L^\infty(\mm)$, then $fg \in W^{1,p}(\X)$ and it holds
    \begin{equation}\label{eq:leibniz modificata}
        \rmCh^{1/p}_p(fg) \le \|g\|_{L^\infty(\mm)}\rmCh_p^{1/p}(f) + \||Dg|_p\|_{L^\infty(\mm)}\|f\|_{L^p(\mm)}.
    \end{equation}
\end{lemma}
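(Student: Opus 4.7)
The plan is to reduce to the classical Leibniz rule for bounded Sobolev functions by truncating $f$, and then recover the general case by lower semicontinuity. For $M>0$, I introduce the truncation $f^M \coloneqq (f \wedge M) \vee (-M) \in L^p(\mm) \cap L^\infty(\mm)$. By picking a Lipschitz approximating sequence for $f$ in the sense of Definition \ref{def:Chp}, applying the same truncation to each element, and using the pointwise bound $\lip(f_n^M) \le \lip(f_n)$, one sees that $f^M \in W^{1,p}(\X)$ with $\rmCh_p(f^M) \le \rmCh_p(f)$, and of course $\|f^M\|_{L^p(\mm)} \le \|f\|_{L^p(\mm)}$.

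Since both $f^M$ and $g$ now lie in $W^{1,p}(\X) \cap L^\infty(\mm)$, the classical Leibniz rule for bounded Sobolev functions on metric measure spaces gives $f^M g \in W^{1,p}(\X)$ with
\[
|D(f^M g)|_p \le |g|\,|Df^M|_p + |f^M|\,|Dg|_p \qquad \mm\text{-a.e.}
\]
Taking $L^p(\mm)$-norms, invoking Minkowski's inequality, and using the hypotheses $g,|Dg|_p \in L^\infty(\mm)$ together with the bounds on $f^M$ yields
\[
\rmCh_p^{1/p}(f^M g) \le \|g\|_{L^\infty(\mm)}\,\rmCh_p^{1/p}(f) + \||Dg|_p\|_{L^\infty(\mm)}\,\|f\|_{L^p(\mm)},
\]
uniformly in $M$. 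Letting $M \uparrow \infty$, dominated convergence gives $f^M \to f$ in $L^p(\mm)$ and, since $g \in L^\infty$, also $f^M g \to fg$ in $L^p(\mm)$. The lower semicontinuity property \eqref{eq:W1psemicontinuous} then yields $fg \in W^{1,p}(\X)$ with $\rmCh_p(fg) \le \liminf_M \rmCh_p(f^M g)$, which combined with the uniform estimate above proves \eqref{eq:leibniz modificata}.

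The main technical input is the classical Leibniz rule for bounded Sobolev functions on metric measure spaces; while standard, its proof requires some care. One way is to further approximate $f^M$ and $g$ by bounded Lipschitz functions (truncating to preserve the $L^\infty$-bounds), exploit the pointwise Leibniz inequality for the local Lipschitz slope, and pass to the relaxation. Alternatively, more in line with the Lagrangian viewpoint stressed in this paper, one may argue via the test-plan characterization anticipated in \eqref{intro:claim}: for any $p$-test plan $\pi$, both $t \mapsto f^M(\gamma_t)$ and $t \mapsto g(\gamma_t)$ are $\pi$-a.e.\ absolutely continuous, and the real-variable product rule delivers the desired control on $|(f^M g)(\gamma_1)-(f^M g)(\gamma_0)|$ after integration against $\pi$.
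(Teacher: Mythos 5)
Your proof is correct and follows essentially the same route as the paper: truncate $f$, apply the standard Leibniz rule for bounded Sobolev functions, control $\rmCh_p(f^M)\le\rmCh_p(f)$ via the $1$-Lipschitz truncation (chain rule), and conclude by lower semicontinuity \eqref{eq:W1psemicontinuous}. You spell out a few steps the paper leaves implicit (Minkowski, dominated convergence) and mention an alternative test-plan argument, but the core strategy is identical.
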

\begin{proof}
    Since $g$ is bounded, then $fg \in L^p(\mm)$. Let $N\in \N$ and consider $f^N\coloneqq (-N)\vee f \wedge N$. In this case, $f^Ng \in W^{1,p}(\X)$ by the standard Leibniz rule (see, e.g.\ \cite{AmbrosioIkonenLucicPasqualetto24}) and
    \[
        \rmCh^{1/p}_p(f^Ng) \le \|g\|_{L^\infty(\mm)}\rmCh_p^{1/p}\left(f^N\right) + \||Dg|_p\|_{L^\infty(\mm)}\left\|f^N\right\|_{L^p(\mm)},\qquad\forall N\in N.
    \]
    The conclusion then follows by a chain rule argument, and by lower semicontinuity \eqref{eq:W1psemicontinuous}.
\end{proof}

We now pass to the introduction of the total variation and the BV-space via relaxation (\cite{Miranda03,AmbrosioDiMarino14}).
\begin{definition}
Let $\Xdm$ be a metric measure space and let $f \in L^1(\mm)$. We define
\[ |\dD f|(\X):= \inf \Big\{ \liminf_{n\to \infty} \int \lip\, f_n\, \d \mm \colon f_n \subset \LIP_{loc}(\mm), f_n \to f \emph{ in } L^1(\mm) \Big\}.\]
The BV-space is defined as $BV(\X) \coloneqq \{ f \in L^1(\mm) \colon |\dD f|(\X)<\infty\}$.
\end{definition}
The quantity $|\dD f|(\X)$ is called the total variation and satisfies the  lower-semicontinuity property
\begin{equation}\label{eq:BVsemicontinuous}
\begin{array}{l}
(f_n)_n \subset BV(\X), f_n \to f \text{ in }L^1 \\
 \liminf_{n\uparrow\infty} |\dD f_n|(\X)<\infty
\end{array} \quad \Rightarrow \quad 
\begin{array}{l}
f \in BV(\X) \\
 |\dD f|(\X)\le \liminf_{n\uparrow\infty} |\dD f_n|(\X).
\end{array} 
\end{equation}

The space $BV(\X)$ can be equivalently characterized in duality with $\infty$-test plans \cite{AmbrosioDiMarino14} (see also \cite{Martio16-2,NobiliPasqualettoSchultz22,BrenaNobiliPasqualetto22} for other equivalent approaches). {The next result is taken from \cite[Theorem 4.5.3]{DiMarinoPhD}.} We say that a test plan $\pi$ is well-contained in an open set $\Omega \subset \X$ if ${\rm dist}([\pi],\Omega^c)>0$, where $[\pi]\coloneqq \{\gamma_t \colon \gamma \in \supp(\pi),t\in[0,1]\}$. 
\begin{theorem}\label{thm:BVplans}
Let $\Xdm$ be a metric measure space and $f \in L^1(\mm)$. They are equivalent:
\begin{itemize}
\item[(i)] $f \in BV(\X)$;
\item[(ii)] There exists $C\ge 0$ so that
\[ \int f(\gamma_1)-f(\gamma_0)\,\d \pi \le {\rm Comp}(\pi)\Lip(\pi)C,\]
for all $\infty$-test plans $\pi$.
\end{itemize}
Moreover, denoting $\cC_f$ the minimal constant $C\ge 0$ for (ii) to hold, we have $\cC_f = |\dD f|(\X)$. Finally, for every $\Omega \subset \X$ open, the following identification holds:
\[
    |\dD f|(\Omega) = \sup \, \frac{\int f(\gamma_1)-f(\gamma_0)\,\d \pi}{{\rm Comp}(\pi)\Lip(\pi)},
\] 
where the supremum is taken among all $\infty$-test plan $\pi$ well contained in $\Omega$. 
\end{theorem}
\begin{remark}\label{rmk:BVplanbounded}
\rm
We notice that (ii) in Theorem \ref{thm:BVplans} is equivalent to 
\begin{itemize}
\item[(ii')] There exists $C\ge 0$ so that
\[ \int f(\gamma_1)-f(\gamma_0)\,\d \pi \le {\rm Comp}(\pi)\Lip(\pi)C,\]
for all $\infty$-test plans $\pi$ with bounded support.
\end{itemize}
Clearly, we only need to show that (ii') implies (ii). This is a standard approximation argument considering, for $\pi$  arbitrary $\infty$-test plan, e.g.\  the boundedly supported test plans
\[
\Gamma_n \coloneqq \big\{ \gamma \in \Lip([0,1],\X) \colon \sfd(\gamma_0,\bar x) \le n, \Lip(\gamma)\le n\big\} \subseteq C([0,1],\X),\qquad \pi_n \coloneqq \frac{\pi\res\Gamma_n}{\pi(\Gamma_n)}.
\]\fr 
\end{remark}
\subsection{pmG-topology and convergence of functions}
We recall here some basic facts about convergence of metric measure structures \cite{Gromov07} (see also \cite{Sturm06I}, here we follow the extrinsic approach described in \cite{GMS15}). A pointed metric measure space is a quadruple $\Xdmx$, where $\Xdm$ is a metric measure space and $x  \in \X$. Also, set $\bar{\N}:=\N\cup\{\infty\}$.
\begin{definition}[pmG-convergence]\label{def:pmGH}
Let $\Xdmxn$, $n \in \bar{\N}$, be a sequence of pointed metric measure spaces. We say that that $\Xdmxn$ \emph{pointed measured Gromov converges} (pmG-converges for short) to $\Xdmxinf$ provided there exists a complete and separable metric measure space $(\Z,\sfd)$, called \emph{realization}, and isometric embeddings
\[
\begin{split}
\iota_n &\colon (\X_n,\sfd_n) \to (\Z,\sfd), \\
\iota_\infty &\colon (\X_\infty,\sfd_\infty) \to (\Z,\sfd),
\end{split}
\]
such that $(\iota_n)(x_n) \to \iota_\infty(x_\infty)$ and
\[ (\iota_n)_\sharp \mm_n \to (\iota_\infty)_\sharp \mm_\infty, \qquad \text{in duality with }C_{bs}(\Z).\]
In this case, we shortly write $\X_n \overset{pmG}{\rightarrow} \X_\infty$.
\end{definition}
In what follows, we shall identify the spaces $\X_n$, $n\in\bar\N$, with their isomorphic images in $\Z$, i.e.\ adopting the so-called extrinsic approach. See \cite{GMS15} for details and equivalences with other notions of convergences. We recall the definition of convergence of functions on varying spaces referring to \cite{GMS15,AmbrosioHonda17}.
\begin{definition}[$L^p$-convergence]\label{def:Lp convergence pmGH}
Let $\Xdmxn$, $n \in \bar{\N}$, be a sequence of pointed metric measure spaces satisfying $\X_n \overset{pmG}{\rightarrow} \X_\infty$. Fix a realization $(\Z,\sfd)$. For $p \in (1,\infty)$, we say that:
\begin{itemize}
\item[(i)] $f_n\in L^p(\mm_n)$ \emph{converges $L^p$-weak} to $f_\infty\in L^p(\mm_\infty)$, provided $\sup_{n}\|f_n\|_{L^p(\mm_n)}<\infty$ and $f_n\mm_n \weakto f_\infty\mm_\infty$ in duality with $C_{bs}(\Z)$,
\item[(ii)] $f_n\in L^p(\mm_n)$ \emph{converges $L^p$-strong} to $f_\infty\in L^p(\mm_\infty)$, provided it converges $L^p$-weak and $\limsup_n \|f_n\|_{L^p(\mm_n)} \le  \|f_\infty\|_{L^p(\mm_\infty)}$.
\end{itemize}
For $p=1$, we say:
\begin{itemize}
\item[(i')] $f_n\in L^1(\mm_n)$ \emph{converges $L^1$-weak} to $f_\infty\in L^1(\mm_\infty)$, provided $\sup_{n}\|f_n\|_{L^1(\mm_n)}<\infty$ and $f_n\mm_n \weakto f_\infty\mm_\infty$ in duality with  $C_{bs}(\Z)$;
\item[(ii')] $f_n\in L^1(\mm_n)$ \emph{converges $L^1$-strong} to $f_\infty\in L^1(\mm_\infty)$, provided $\sigma \circ f_n$ converges $L^2$-strong to $\sigma \circ f_\infty$, where $\sigma(z) = {\rm sgn}(z)\sqrt{|z|}$.
\end{itemize}
\end{definition}
The case of $L^1$-strong convergence needs the above modification, otherwise, it does not match the standard convergence for fixed-based space \cite[Remark 1.27]{AmbrosioBrueSemola19}. Here we recall an important property of the weak/strong convergence of integral couplings: given $p,q \in (1,\infty)$ H\"older conjugate, if $f_n$ converges $L^p$-strong to $f_\infty$ and $g_n$ converges $L^q$-weak to $g_\infty$, then 
\begin{equation}
\lim_{n\to\infty}\int f_n g_n\,\d\mm_n = \int f_\infty g_\infty\,\d\mm_\infty.\label{eq:pq coupling}
\end{equation}
(see, e.g., \cite[Proposition 6.2]{NobiliViolo22} for a proof of this fact working also for non-compact spaces).

Next, we recall the existence of recovery sequences. As already discussed in the Introduction, the validity of the following result is discussed in \cite{Gigli23_working} and its proof for general exponent follows by the analysis in \cite[Theorem 8.1]{AmbrosioHonda17}.
\begin{theorem}\label{th:limsup}
    Let $\Xdmxn$ be pointed metric measure spaces satisfying $\X_n \overset{pmG}{\rightarrow} \X_\infty$ for some $\Xdmxinf$. Then, for every $p \in [1,\infty)$ and $f_\infty \in L^p(\mm_\infty)$ it holds
    \begin{itemize}
        \item[{\rm (i)}] if $p>1$, there are $f_n \in L^p(\mm_n)$ converging in $L^p$-strong to $f_\infty$ such that
        \[
            \limsup_{n\uparrow \infty}\rmCh_p(f_n) \le \rmCh_p(f_\infty);
        \]
        \item[{\rm (ii)}] if $p=1$, there are $f_n \in L^1(\mm_n)$ converging in $L^1$-strong to $f_\infty$ such that
        \[
           \limsup_{n\uparrow \infty}|\dD f_n|(\X_n)\le  |\dD f_\infty|(\X_\infty).
        \]
    \end{itemize}
\end{theorem}

\subsection{Curvature dimension conditions}
We recall here the definition of a \({\sf CD}_q\)-space, after \cite{Sturm06I,Sturm06II} and \cite{Lott-Villani09}. We start from the infinite-dimensional version.

Recall the definition of the Shannon entropy functional ${\rm Ent}_\mm \colon \PP(\X) \to [0,\infty]$ as defined by
\begin{equation} {\rm Ent}_\mm(\mu) := \int \rho\log \rho \,\d \mm, \qquad \text{if } \mu = \rho\mm, \quad \infty\text{ otherwise}. \label{eq:Entm}
\end{equation}

\begin{definition}[\({\sf CD}_q(K,\infty)\)-space]\label{def:CDq infty}
A metric measure space \((\X,\sfd,\mm)\) is said to be a \({\sf CD}_q(K,\infty)\)
space, for some \(K\in\R\) and \(q\in(1,\infty)\), if given \(\mu_0,\mu_1\in\mathscr P_q(\X)\),
there exists \(\pi\in{\rm OptGeo}_q(\mu_0,\mu_1)\) such that, denoting
\(\mu_t\coloneqq(\e_t)_\sharp\pi\) for every \(t\in[0,1]\), it holds
\begin{equation}
{\rm Ent}_\mm(\mu_t)\leq(1-t){\rm Ent}_\mm(\mu_0)+t{\rm Ent}_\mm(\mu_1)-\frac K2t(1-t)W_q^2(\mu_0,\mu_1),
\label{eq:convexity Ent}
\end{equation}
for every $ t\in[0,1]$. When \(q=2\), we only write \({\sf CD}(K,\infty)\) in place of \({\sf CD}_2(K,\infty)\).
\end{definition}
It is important in this note to recall that, on ${\sf CD}_q$-spaces, there is an abundance of $q$-test plans obtained via Wasserstein interpolation. The following theorem is due to \cite{Rajala12-2} (see \cite[Appendix B]{GigliNobili22} for details in the case $q\neq 2$ which we also consider here).
\begin{theorem}\label{thm:rajalaCDqKN}
Let $\Xdm$ be a $\CD_q(K,\infty)$-space for some $K \in \R$ and $q \in (1,\infty)$. Let $\rho_0 ,\rho_1 \in L^{\infty}(\mm)$ be boundedly supported probability densities and suppose, for some $D\in(0,\infty)$, that
\[
\Lip(\pi)\le D,\qquad \forall \pi \in \OptGeo(\rho_0\mm,\rho_1\mm).
\]
Then, there exists $ \pi \in \OptGeo_q(\rho_0\mm,\rho_1\mm)$ with $(\e_t)_\sharp \pi \ll \mm$ and, writing $(\e_t)_\sharp \pi\coloneqq  \rho_t\mm$, we have
\[
 \| \rho_t\|_{L^\infty(\mm)} \le e^{\frac{K^-}{12}D^2} \|\rho_0\|_{L^\infty(\mm)}\vee \|\rho_1\|_{L^\infty(\mm)},\qquad \forall t \in [0,1].
 \]
\end{theorem}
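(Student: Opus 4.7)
The plan is to follow Rajala's approach from \cite{Rajala12-2} (extended to general transport exponent $q$ as in \cite[Appendix B]{GigliNobili22}), combining the entropy convexity provided by $\CD_q(K,\infty)$ with a restriction and gluing procedure. The overall idea is to produce a good candidate optimal dynamical plan, obtain an integrated entropy bound along it, and then upgrade this to a pointwise $L^\infty$-bound.

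First, by standard tightness (the marginals are boundedly supported), there exists some $\pi \in \OptGeo_q(\rho_0\mm,\rho_1\mm)$, and the $\CD_q(K,\infty)$ hypothesis allows us to select such $\pi$ along which \eqref{eq:convexity Ent} holds. By the hypothesis on the Lipschitz bound together with \eqref{eq:dynOPq}, $W_q(\rho_0\mm,\rho_1\mm)\le \Lip(\pi)\le D$, so \eqref{eq:convexity Ent} specializes to
\[
{\rm Ent}_\mm(\mu_t)\le(1-t){\rm Ent}_\mm(\rho_0\mm)+t\,{\rm Ent}_\mm(\rho_1\mm)+\tfrac{K^-}{2}t(1-t)D^2.
\]
Next, I would pass from this integrated entropy bound to the desired pointwise bound via a restriction/gluing trick. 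Set $M:=\max(\|\rho_0\|_{L^\infty(\mm)},\|\rho_1\|_{L^\infty(\mm)})$ and $L:=e^{K^-D^2/12}M$, and assume for contradiction that $\mm(\{\rho_t>L\})>0$ for some $t\in(0,1)$. Putting $A:=\{\gamma\in\Geo(\X): \gamma_t\in\{\rho_t>L\}\}$, the restricted plan $\pi_A:=(\pi\restr A)/\pi(A)$ is still optimal between its marginals $\mu_0^A,\mu_1^A$ by cyclical monotonicity, and their densities are bounded by $M/\pi(A)$. Applying $\CD_q(K,\infty)$ to $(\mu_0^A,\mu_1^A)$ yields a (possibly different) entropy-convex plan $\tilde\pi_A$, which one then splices back into $\pi$ in place of $\pi\restr A$ to produce an admissible plan in $\OptGeo_q(\rho_0\mm,\rho_1\mm)$ with strictly smaller intermediate density at $t$. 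A maximality argument (or iteration plus Wasserstein compactness) then closes the contradiction.

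The main obstacle is obtaining the sharp constant $\tfrac{K^-}{12}$, rather than the naive $\tfrac{K^-}{8}$ coming directly from $\max_t t(1-t)=1/4$. This improvement requires a finer optimization that balances the chosen density threshold against the entropy gain provided by the $\CD_q$ inequality applied to the restricted marginals, in the style of Rajala's original argument, and in particular exploits the freedom to iterate the restriction on carefully chosen subintervals of $[0,1]$. A secondary technical point is handling general $q\in(1,\infty)$: although \eqref{eq:convexity Ent} features $W_q$, the Lipschitz hypothesis controls $W_\infty$ and therefore $W_q$ via \eqref{eq:lim_Wq}, so no additional obstruction arises there, but one still needs the $q$-analogue of cyclical monotonicity in the restriction step, which is where the reduction carried out in \cite[Appendix B]{GigliNobili22} is invoked.
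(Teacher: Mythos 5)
The paper does not itself prove Theorem~\ref{thm:rajalaCDqKN}; the result is cited from \cite[Theorem~1.3, Proposition~3.11]{Rajala12-2} for $q=2$ and from \cite[Appendix~B]{GigliNobili22} for general $q\in(1,\infty)$, with the remark after the statement merely explaining how the hypothesis $\Lip(\pi)\le D$ maps onto Rajala's assumptions. Your outline does track the architecture of that proof: take an entropy-convex $q$-optimal dynamical plan, restrict to geodesics with anomalously large intermediate density, invoke restriction-optimality of subplans (as in Lemma~\ref{lem:subplan is optimal}), apply the $\CD_q$ convexity to the restricted marginals, and splice back.

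There are, however, genuine gaps which you partly acknowledge. The entire content of the statement is the precise constant $e^{K^-D^2/12}$; you correctly observe that the naive argument via $\max_t t(1-t)=1/4$ would give $e^{K^-D^2/8}$, and you state that the improvement to $1/12$ requires a finer optimization with iteration over nested subintervals, but you do not carry this out. This is not a cosmetic refinement: the factor $1/12$ arises (roughly) from $\int_0^1 t(1-t)\,\d t=1/6$ multiplied by the $K/2$ in \eqref{eq:convexity Ent}, through a bootstrap where the entropy inequality is applied on progressively finer time-grids and the density at an intermediate node is controlled in terms of densities at neighboring nodes that are themselves a priori unknown and must be estimated recursively; showing this iteration converges with the advertised constant is the bulk of \cite[Proposition~3.11]{Rajala12-2} and of its extension in \cite[Appendix~B]{GigliNobili22}. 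A second gap is the closing step of your contradiction: ``a maximality argument (or iteration plus Wasserstein compactness) then closes the contradiction'' is not concrete. The $\CD_q$ condition bounds the entropy, an integrated quantity, not the density pointwise; going from one to the other is exactly the delicate step where one shows that a bad set $\{\rho_t>L\}$ of positive measure forces the restricted entropy to contradict the convexity bound, and a vague appeal to maximality or compactness does not substitute for it. As written, the proposal is a plausible roadmap but not a proof.
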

We point out that, in \cite[Theorem 1.3]{Rajala12-2}, the choice $D \coloneqq \supp(\rho_0)\cup\supp(\rho_1)$ is considered. However, its proof relies on \cite[Proposition 3.11]{Rajala12-2} requiring precisely our assumptions (see also \cite{RajalaDgiustadimensionefinita}).

Enforcing a non-branching assumption on the curvature dimension condition produces useful properties. Recall that a metric space \((\X,\sfd)\) is non-branching
provided 
\begin{equation}
(\gamma_0,\gamma_t)=(\sigma_0,\sigma_t),\,\text{ for some }t\in(0,1)
\quad\Longrightarrow\quad\gamma=\sigma,
\label{eq:nonbranching}
\end{equation}
holds for every \(\gamma,\sigma\in{\rm Geo}(\X)\). Following \cite{RajalaSturm12}, we call a  metric measure space $\Xdm$  $q$-essentially non-branching for some $q \in (1,\infty)$, provided for any  $\mu_0,\mu_1 \in \PP_q(\X)$ there exists $\pi \in {\rm OptGeo}_q(\mu_0,\mu_1)$ that is concentrated on a set on non-branching geodesics. 
\begin{remark}[Improved estimates for strong ${\sf CD}$]\label{rem:rajala improved} \rm
If, in Theorem \ref{thm:rajalaCDqKN}, we assume that $\Xdm$ is a strong $\CD_q(K,\infty)$ space, then it turns out that improved density estimates are available. The strong $\CD_q(K,\infty)$ condition requires \eqref{eq:convexity Ent} \emph{for all} dynamical plans. As a byproduct of this reinforcement, it was proved in \cite{RajalaSturm12} (for $q=2$, but extendable for every $q\in(1,\infty)$) that
\begin{equation}\label{eq:optgeo unique}
\OptGeo_q(\mu_0,\mu_1)\text{ is a singleton for all absolutely continuous } \mu_0,\mu_1\in\PP_q(\X).
\end{equation}
In particular, the strong ${\sf CD}$ property guarantees in Theorem \ref{thm:rajalaCDqKN} that the unique $\pi \in \OptGeo_q(\rho_0\mm,\rho_1\mm)$ satisfies
\begin{equation}
 \|\rho_t\|_{L^\infty(\mm)} \le e^{\frac{K^-}{12}\Lip( \pi)^2} \|\rho_0\|_{L^\infty(\mm)}\vee \|\rho_1\|_{L^\infty(\mm)}, \label{eq:rajala improved}
\end{equation}
for every $t \in [0,1]$. This is trivially true since we can choose $D=\Lip( \pi) = \|\sfd(\e_0,\e_1)\|_{L^\infty(\pi)}.$ Lastly, we shall also use property \eqref{eq:rajala improved} in the case of $q$-essentially nonbranching ${\sf CD}_q(K,N)$ spaces when $N<\infty$, a class that will be shortly introduced. This will be possible thanks to \cite{Kell17}. \fr  
\end{remark}

We now recall the finite-dimensional curvature dimension conditions, starting from the definition of the \(\sigma_{K,N}^{(t)}\)  coefficient for \(K\in\R\),
\(N\in(1,\infty)\), \(t\in[0,1]\), and \(\theta\in[0,+\infty)\), defined as  
\[
\sigma^{(t)}_{K,N}(\theta) := \begin{cases} 
\ \infty, &\text{if } K\theta^2 \ge N\pi^2,\\
\ \frac{\sin(t\theta\sqrt{K/N})}{\sin(\theta\sqrt{K/N)}}, &\text{if } 0<K\theta^2<N\pi^2,\\
\ t, &\text{if } K\theta^2<0\text{ and } N=1 \text{ or if } K\theta^2=0, \\
\ \frac{\sinh(t\theta\sqrt{-K/N})}{\sinh(\theta\sqrt{-K/N)}}, &\text{if } K\theta^2< 0 \text{ and }N>0.\\
\end{cases}
\]
We then set $\tau^{(t)}_{K,N}(\theta) := t^{\frac{1}{N}}\sigma^{(t)}_{K,N-1}(\theta)^{1-\frac{1}{N}}$ while $\tau^{(t)}_{K,1}(\theta) =t$ if $K\le 0$ and $\tau^{(t)}_{K,1}(\theta)=\infty$ if $K>0$. 

Given a metric measure space \((\X,\sfd,\mm)\) and
\(N\in(1,\infty)\), we define the \(N\)-R\'{e}nyi relative entropy functional
\(\mathcal U_N\colon\mathscr P(\X)\to[0,+\infty]\) as
\[
\mathcal U_N(\mu)\coloneqq\int\rho^{1-\frac{1}{N}}\,\d\mm,
\qquad \forall \mu\in\mathscr P(\X),\;\mu=\rho\mm+\mu^s
\text{ with }\mu^s\perp\mm.
\]
\begin{definition}[\({\sf CD}_q(K,N)\)-space]\label{def:CDq N}
A metric measure space \((\X,\sfd,\mm)\) is said to be a \({\sf CD}_q(K,N)\)
space, for some \(K\in\R\), \(N\in(1,\infty)\) and \(q\in(1,\infty)\), if given any  \(\mu_0=\rho_0\mm,\mu_1=\rho_1\mm \in\mathscr P_q(\X)\), there exists \(\pi\in{\rm OptGeo}_q(\mu_0,\mu_1)\) so that, denoting
\(\mu_t\coloneqq(\e_t)_\sharp\pi\ll\mm\), it holds
\[
\mathcal U_{N'}(\mu_t)\geq\int\rho_0(\gamma_0)^{-\frac{1}{N'}}
\tau_{K,N'}^{(1-t)}\big(\sfd(\gamma_0,\gamma_1)\big)+
\rho_1(\gamma_1)^{-\frac{1}{N'}}
\tau_{K,N'}^{(t)}\big(\sfd(\gamma_0,\gamma_1)\big)\,\d\pi(\gamma),
\]
for every \(N'\geq N\) and \(t\in[0,1]\). When \(q=2\),
we only write \({\sf CD}(K,N)\) in place of \({\sf CD}_2(K,N)\).
\end{definition}
We shall use the well-known fact that the finite-dimensional $\CD_q$ condition implies its infinite-dimensional analogue. We recall next from \cite[Theorem 1.1]{ACCMcS20} the following important result.
\begin{theorem}[Equivalence of \({\sf CD}_q\) on \(q>1\)]
\label{thm:CDq independent}
Let \((\X,\sfd,\mm)\) be $q$-essentially non-branching for all $q\in(1,\infty)$. Assume that $\X$ is a \({\sf CD}(K,N)\) space,
for some \(K\in\R\) and \(N\in(1,\infty)\) and the measure
\(\mm\) is finite. Then \((\X,\sfd,\mm)\) is a \({\sf CD}_q(K,N)\)
space for all \(q\in(1,\infty)\).
\end{theorem}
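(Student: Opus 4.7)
The strategy I would take is to route the equivalence through the one-dimensional localization characterization of $\CD(K,N)$ on essentially non-branching spaces, in the spirit of Cavalletti--Mondino. The crucial point is that, once one has localized the curvature--dimension condition to a disintegration into transport rays, what remains is a pointwise density inequality on the conditional measures, and this inequality is manifestly independent of the transport exponent used to detect it. So the equivalence of the various $\CD_q$ conditions becomes a question about how to go from the 1D needle-wise condition back to a $q$-transport statement, rather than about transferring convexity inequalities across different global entropies.

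Concretely, I would first reduce, by standard truncation and lower semicontinuity of the R\'enyi entropies, to the case $\mu_0=\rho_0\mm$, $\mu_1=\rho_1\mm$ with $\rho_0,\rho_1\in L^\infty(\mm)$ of bounded support. Then I would apply the Cavalletti--Mondino needle decomposition coming from an $L^1$-Kantorovich potential (whose existence is guaranteed by the finiteness of $\mm$) to get $\mm=\int_Q\mm_\alpha\,\d\mathfrak q(\alpha)$, with $\mathfrak q$-a.e.\ conditional $\mm_\alpha$ being a $\CD(K,N)$-needle: a one-dimensional interval equipped with a density $h_\alpha$ that satisfies the $(K,N)$-concavity inequality. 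This step uses only the hypothesis $\CD(K,N)$ and essential non-branching, and produces a geometric object that does not depend on $q$.

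Next, for the fixed exponent $q\in(1,\infty)$, I would construct a $q$-optimal dynamical plan $\pi\in\OptGeo_q(\mu_0,\mu_1)$ that respects the needles, in the sense that $\pi$-a.e.\ geodesic is contained in a single ray $X_\alpha$. Concretely, one disintegrates $\mu_0$ and $\mu_1$ along the rays, builds the 1D $q$-monotone rearrangement between the conditional marginals on each $X_\alpha$, and glues the resulting transports together; the $q$-essentially non-branching assumption guarantees that the glued plan is indeed a $q$-optimal dynamical plan. Once this is in place, the convexity inequality from Definition~\ref{def:CDq N} along the needles is a purely 1D statement that follows from the $(K,N)$-concavity of $h_\alpha$ combined with Jensen's inequality applied through the monotone rearrangement map. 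Integrating over $Q$ then yields $\CD_q(K,N)$.

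The main obstacle is the ray-respecting property of the $q$-optimal transport: for $q=2$ it is already built into the Cavalletti--Mondino framework (and in fact is how needle decomposition is typically used), but for arbitrary $q\in(1,\infty)$ one has to argue that the geodesics used by the $W_q$-problem cannot cross needles produced from an $L^1$-potential. This is where the full strength of the hypotheses enters: the $q$-essentially non-branching condition for \emph{every} $q$ is needed to rule out $q$-geodesic branching between distinct rays, while the finiteness of $\mm$ is used both to obtain Kantorovich potentials (for $\sfd$ and, implicitly, for $\sfd^q$) and to ensure integrability of the conditional densities $\rho_i\mm_\alpha$ along which the 1D entropy inequality is expressed. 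A variant of this step — the one that seems least routine — is the measurability of the ray-wise $q$-monotone rearrangement in $\alpha$, which I would handle via a selection theorem applied to the graph of the 1D optimal map.
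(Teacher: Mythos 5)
The paper does not prove this statement: it simply cites \cite[Theorem 1.1]{ACCMcS20}, so there is no in-paper proof to compare against. Judged on its own merits, your high-level philosophy (route through a $q$-independent one-dimensional localization and then reassemble) is indeed the organizing principle of the ACCMcS argument, and your reduction to bounded densities with bounded support is fine. However, there is a genuine gap at the reassembly step.

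The gap is your claim that one can build $\pi\in\OptGeo_q(\mu_0,\mu_1)$ that \emph{respects the needles} of an $L^1$-Kantorovich potential, i.e.\ with $\pi$-a.e.\ geodesic contained in a single transport ray. This is false in general: the Cavalletti--Mondino needle decomposition is generated by the cost $\sfd(x,y)$, whereas the $W_q$-optimal transport for $q>1$ is generated by the cost $\sfd(x,y)^q$, and for absolutely continuous $\mu_0,\mu_1$ the two problems have different Kantorovich potentials and hence different transport rays. Already in $\R^n$ one can pick $\mu_0,\mu_1$ so that the $W_q$-optimal map crosses the $L^1$-transport rays transversally; gluing the fibrewise $q$-monotone rearrangements will therefore produce a plan that is \emph{not} $q$-cyclically monotone, and $q$-essential non-branching does not rescue this. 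Your remark that ``for $q=2$ it is already built into the Cavalletti--Mondino framework'' is also not accurate: that framework is a pure $L^1$ structure and makes no assertion that $W_2$-geodesics follow the needles. What actually makes the passage from a needle-wise $\CD^1$ condition back to $\CD_q$ work (in Cavalletti--Milman's globalization theorem and in \cite{ACCMcS20}'s extension of it to general $q$) is much subtler: one runs the $W_q$-geodesic, tracks the Hopf--Lax evolution of the $q$-Kantorovich potential, shows that on \emph{short} time intervals the interpolating geodesics respect the needle decomposition of the \emph{time-dependent} intermediate potential, does a change-of-variables there, and then patches the local inequalities into the global convexity estimate. That ``short time / evolving potential / change of variables'' mechanism, and not a direct ray-respecting $q$-plan between $\mu_0$ and $\mu_1$, is the missing idea in your sketch, and it is where essentially all of the difficulty of \cite{ACCMcS20} lives.
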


Finally, we report the definition of the measure contraction property defined in \cite{Ohta07} and \cite{Sturm06II}.
\begin{definition}[$\mcp$-spaces]\label{def:MCP}
A metric measure space $\Xdm$ is said to be ${\sf MCP}(K,N)$ for some $K \in \R, N \in [1,\infty)$ if for any $\mu_0=\rho_0\mm \in \PP_2(\X)$ absolutely continuous with bounded support contained in $\supp(\mm)$ and $o \in \supp(\mm)$, there exists $\pi \in \OptGeo_2(\mu_0,\delta_o)$ so that
\begin{equation}
    \mathcal{U}_N((\e_t)_\sharp \pi) \ge \int \tau^{(1-t)}_{K,N}(\sfd(x,o))\rho_0^{1-\frac 1N}\, \d \mm,\qquad \forall t \in [0,1).\label{eq:mcp convexity}
\end{equation}
\end{definition}
When coupling the ${\sf MCP}$-class with the essentially non-branching condition, the ${\sf MCP}$ notions of \cite{Ohta07} and \cite{Sturm06II} coincide.  Also, we point out that the $\mcp$ condition is independent of the transport exponent, i.e.\ \eqref{eq:mcp convexity} holds also for $W_q$-geodesics with $q\neq 2$ (see, e.g., \cite[Remark 5.2]{GigliNobili22}).

Finally, we recall the validity of the Sobolev-Poincar\'e inequality under curvature dimension conditions. If $\Xdm$ is either a ${\sf CD}(K,N)$ space, or a $q$-essentially nonbranching ${\sf MCP}(K,N)$ space for some $q \in (1,\infty),\, K\in\R,\, N\in [1,\infty)$, then for all $R>0$ there is a constant $C(K,N,R)>0$ so that, for all $r\le R, \, x \in \X$, it holds
\[
    \left(\fint_{B_r(x)}|u-u_{B_r(x)}|^{s}\,\d \mm \right)^{\frac {1}{s}}\le C(K,N,R)r\fint_{B_{2r}(x)}\lip(u)\,\d \mm,\qquad \forall u \in \Lip(\X),
\]
where $u_B = \fint_B u\,\d \mm$ for $B\subset \X$ and $1\le s <1^* = N/(N-1)$. The above simply follows by \cite[Theorem 5.1]{HK00} as the spaces under considerations satisfy the Bishop-Gromov inequality  (\cite{Sturm06II}) and are equipped with a weak local Poincar\'e inequality (see \cite{Rajala12-2} and \cite{VonRenesse08}).  By considering optimal approximations in the definition of the total variation, the above yields
\[
    \left(\fint_{B_r(x)}|u-u_{B_r(x)}|^{s}\,\d \mm \right)^{\frac {1}{s}}\le C(K,N,R)r\frac{|\dD u|(B_{2r}(x))}{\mm(B_{2r}(x))},\qquad \forall u \in BV(\X).
\]
After algebraic manipulations, we get
\begin{equation}\label{SobPoincare BV}
    \|u\|_{L^s(B_r(x))}\le C(K,N,R)r\frac{\mm(B_r(x))^{1/s}}{\mm(B_{2r}(x))} |\dD u|(B_{2r}(x)) + \mm(B_r(x))^{1/s-1}\|u\|_{L^1(B_r(x))}.
\end{equation}
\section{An equivalent Lagrangian notion of calculus}
Here we follow the strategy of \cite{AmbrosioGigliSavare11-3} to prove that the relaxed notion of Sobolev space of Definition \ref{def:Chp} is equivalent to a Lagrangian notion controlling the oscillations of the function via test plan superposition. Differently from \cite{AmbrosioGigliSavare11-3}, we prove the equivalence with an ``integrated'' notion which is strongly inspired by \cite{AmbrosioDiMarino14}.
\begin{theorem}\label{thm:Sobolevintegrated}
Let $\Xdm$ be a metric measure space, let $p,q \in (1,\infty)$ be H\"older conjugate and $f \in L^p(\mm)$. The following are equivalent:
\begin{itemize}
\item[(i)] $f \in W^{1,p}(\X)$;
\item[(ii)] There exists $C\ge 0$ so that
\[ \int f(\gamma_1) - f(\gamma_0)\,\d \pi \le {\rm Comp}(\pi)^{1/p} \rmKe_q^{1/q}(\pi) C,\]
for all $q$-test plans $\pi$.
\end{itemize}
Moreover, denoting $C_f$ the minimal constant $C\ge0$ for (ii) to hold, we have 
$C_f^p = {\rm Ch}_p(f)$. 
\end{theorem}
\begin{proof}
The implication \textrm{(i) $\to$ (ii)} is immediate noticing that for any sequence $(f_n) \subseteq \Lip(\X)$ competitor in the definition of ${\rm Ch}_p(f)$, we have for any $n \in \N$ that
\[
 \int f_n(\gamma_1)-f_n(\gamma_0)\,\d \pi \le \iint_0^1 \lip\, f_n(\gamma_t)|\dot \gamma_t|\,\d t \d \pi \le {\rm Comp}(\pi)^{1/p} \rmKe_q^{1/q}(\pi)\| \lip f_n\|_{L^p(\mm)}.
\]
Passing to the limit in the above (recall $L^p(\mm) \ni f \mapsto f\circ \e_t \in L^1(\pi)$ is continuous for any $t \in [0,1] $, see e.g.\ \cite{GP19}) and optimizing over $(f_n)$, we get (ii) holds true and $C_f^p \le {\rm Ch}_p(f)$. We pass to the converse implication \textrm{(ii) $\to$ (i)} and ${\rm Ch}_p(f) \le C_f^p$. We subdivide the proof into different steps.

\noindent{\bf Reduction to $f\in L^\infty(\mm)$}. We observe that it is not restrictive to suppose $f \in L^\infty(\mm)$. To see that, suppose that the implication is valid for bounded functions and let $f \in L^p(\mm)$ satisfy (ii) with $C_f<\infty$. 
 For all $N\in\N$ set $f^N \coloneqq (-N)\vee f \wedge N$ and, for any $q$-test plan $\pi$, consider the Borel set
\[
    \Gamma^+_N \coloneqq\{ \gamma: f^N(\gamma_1)-f^N(\gamma_0) >0\}.
\]
For any curve $\gamma \in \Gamma^+_N$, it holds that
\[
    f^N(\gamma_1)-f^N(\gamma_0)=|f^N(\gamma_1)-f^N(\gamma_0)| \leq |f(\gamma_1)-f(\gamma_0)|=f(\gamma_1)-f(\gamma_0),
\]
since $t\mapsto (-N)\vee t \wedge N$ is nondecreasing and $1$-Lipschitz. If $\pi(\Gamma^+_N)>0$, we can set $\pi^+_N \coloneqq \pi\res{\Gamma^+_N}/\pi(\Gamma^+_N)$ and estimate
\begin{equation}\label{eq:truncation plans}
\begin{aligned}
    \int f^N(\gamma_1)-f^N(\gamma_0)\,\d \pi &\le \int f^N(\gamma_1)-f^N(\gamma_0)\,\d \pi\res{\Gamma^+_N} = \pi(\Gamma^+_N)\int f^N(\gamma_1)-f^N(\gamma_0)\,\d \pi^+_N \\
    &\le \pi(\Gamma^+_N) \int f(\gamma_1)-f(\gamma_0)\,\d \pi^+_N \le \pi(\Gamma^+_N) {\rm Comp}^{1/p}(\pi^+_N)\rmKe_q^{1/q}(\pi^+_N)C_f \\
    &\le {\rm Comp}(\pi)^{1/p}\rmKe_q^{1/q}(\pi)C_f,
\end{aligned}
\end{equation}
having used the assumption (ii), i.e.\  $C_f<\infty$, the simple observations
\[
    {\rm Comp}(\pi^+_N) \le  \pi(\Gamma^+_N)^{-1}{\rm Comp}(\pi),\qquad \rmKe_q(\pi^+_N) \le   \pi(\Gamma^+_N)^{-1}\rmKe_q(\pi),
\]
and the fact that $p,q$ are H\"older conjugate exponents. If, instead, $\pi(\Gamma^+_N)=0$, then \eqref{eq:truncation plans} becomes trivially valid. In particular, by arbitrariness of $\pi$, we thus get
\[
C_{f^N}\le C_f,\qquad \forall N\in\N.
\]
Therefore, assuming that (ii) is equivalent to (i) for bounded functions, we deduce
\[
    \rmCh_p(f^N)=C^p_{f^N} \le C^p_f,\qquad \forall N\in\N.
\]
As $f^N \to f $ in $L^p(\mm)$, by lower semicontinuity  \eqref{eq:W1psemicontinuous} we obtain $f \in W^{1,p}(\X)$ and $\rmCh_p(f) \le C^p_f $. This discussion guarantees that we can assume in the sequel, without loss of generality, that $f\in L^\infty(\mm)$.

\noindent\textbf{Proof when }$\mm(\X)<\infty$. Let us assume for the moment that $\mm(\X)<\infty$. This will allow us to sum constants to $f$ without losing its integrability. Being $f \in L^\infty(\mm)$, we can consider $c,C,H>0$ s.t.
\[ 
0< c +H \le f+H \le C+ H,\qquad \mm\text{-a.e.}.\]
Notice that $H>0$ can be chosen arbitrarily large (eventually we will send $H \uparrow \infty$). Now, let $m \coloneqq \int f+H \, \d \mm$, and set
\[
g_0:=m^{-1}(f+H),
\]
and consider the heat flow trajectory 
$$t \mapsto g_t:=h_t(g_0) \in L^p(\mm),$$
of the functional ${\rm Ch_p} \colon L^p(\mm) \to [0,\infty]$ (\cite{AmbrosioGigliSavare11}, see also \cite{GP19}). Having assumed $\mm(\X)<\infty$ and $f\in L^\infty(\mm)$,  then $g_0 \in L^p(\mm)$ and such heat flow trajectory can be considered. We recall from \cite[Proposition 6.6]{AmbrosioGigliSavare11-3} the properties:
\begin{itemize}
\item[ (a)](Mass preservation) $1= \int g_0 \, \d \mm = \int g_t\,\d \mm$ for any $t>0$.
\item[ (b)] (Weak maximum principle) $f\le C$ (resp. $f \ge c$) $\mm$-a.e.\ then $g_t \le m^{-1}(C+H)$ (resp. $g_t\ge m^{-1}(c+ H)$)  $\mm$-a.e..
\item[ (c)] (Energy dissipation) If $c \le f \le C$ $\mm$-a.e.\ and $\Phi \in C^2([m^{-1}(c+H),m^{-1}(C+H)])$, then $t\mapsto \int \Phi(g_t)\,\d\mm$ is locally absolutely continuous on $(0,\infty)$ and
\[
\frac{\d}{\d t}\int \Phi(g_t)\,\d\mm = -p \int \Phi''(g_t)|Dg_t|_p^p\,\d \mm,\qquad \text{a.e.\ } t.
\]
\end{itemize}

Moreover, defining 
\[ \mu_t \coloneqq  g_t \mm, \qquad \forall t \in [0,1],\]
we have that $(\mu_t)$ is an $AC^q$-curve with values in the Wasserstein space $(\PP_q(\X),W_q)$ having the following speed-control (see Kuwada's lemma \cite[Lemma 7.2]{AmbrosioGigliSavare11-3}):
\begin{equation}\label{eq:kuwada speed}
|\dot \mu_s|^q \le p^q\int \frac{|Dg_s|^p_p}{g_s^{q-1}}\,\d \mm,\qquad \text{a.e.\ }s.
\end{equation}
Recall also that, by superposition principle \cite{Lisini07}, we can consider $\pi \in \PP(C([0,1],\X))$ satisfying
\[ (\e_t)_\sharp \pi = \mu_t,\quad \forall t \in[0,1],\qquad \text{and}\qquad  \rmKe_q(\pi) = \int_0^1|\dot \mu_t|^q\,\d t\]
and we set $\pi_t:=\big({\sf rest}_0^t\big)_\sharp \pi$. By choosing $\Phi(z)=z^2/2$, we can use at first the energy dissipation principle (c) and start estimating
\[
\begin{split}
p\int_0^t \int |Dg_s|^p_p \,\d \mm \,\d s &= \int \Phi(g_0) - \Phi(g_t)\,\d \mm  \\
 &\le \int \Phi'(g_0)( g_0 - g_t)\,\d\mm \\
&=\int g_0\circ \e_0 -g_0\circ \e_1\, \d \pi_t \\
&= m^{-1} \int f(\gamma_0)-f(\gamma_1)\,\d \pi_t \\
&\le m^{-1}{\rm Comp}(\pi_t)^{\frac 1p} \rmKe_q^{\frac 1q}(\pi_t) C_f,
\end{split}
\]
having used the convexity of $\Phi$ in the first inequality and, in the latter inequality, hypothesis (ii) with the ``reversed in time'' $\big({\sf rest}_1^0\big)_\sharp \pi_t$ test plan. We now start estimating each term as follows: first, by a simple change of variable and recalling the speed-control \eqref{eq:kuwada speed}, we get
\[ 
\begin{split}
 \rmKe_q(\pi_t) &= t^{q-1}\iint_0^t|\dot \gamma_s|^q\,\d s \, \d \pi =  t^{q-1}\int_0^t|\dot \mu_s|^q\,\d s \le t^{q-1} p^q \int_0^t\int \frac{|Dg_s|^p_p}{g_s^{q-1}}\,\d \mm \,\d s \\
 &\le t^q p^q\left(\frac{c+H}{m}\right)^{1-q}\fint_0^t {\rm Ch}_p(g_s) \, \d s,
 \end{split}
\]
where for the last inequality we used the weak maximum principle (b). Then, from (b) we also infer
\[ 
{\rm Comp}(\pi_t) \le \frac{C+H}{m}.
\]
Combining all the estimates we get
\[ 
\fint_0^t {\rm Ch}_p(g_s) \, \d s \leq m^{-1} \left(\frac{C+H}{c+H}\right)^{\frac 1p} \left(\fint_0^t {\rm Ch}_p(g_s) \,\d s\right)^\frac 1q C_f.
\]
Thus, recalling also that $s\mapsto {\rm Ch}_p(g_s)$ is non-increasing, we obtain
\[
{\rm Ch}_p(g_t)^\frac 1p \leq \left(\fint_0^t {\rm Ch}_p(g_s) \,\d s\right)^\frac 1p \leq m^{-1} \left(\frac{C+H}{c+H}\right)^{\frac 1p} C_f.
\]
Now, sending $t \downarrow 0$, the lower semicontinuity \eqref{eq:W1psemicontinuous} gives
\[
m^{-p} \, {\rm Ch}_p(f)={\rm Ch}_p(g_0) \le m^{-p} \frac {C+H}{c+H} C_f^p.
\]
Finally taking also the limit $H\uparrow \infty$ yields $
{\rm Ch}_p(f) \le C_f^p.$

\noindent\textbf{Proof for general $\mm$}. Here we consider a locally bounded measure $\mm$ and conclude the proof. We start noticing that condition (ii) satisfies the following global-to-local property: if $K\subset \X$ is bounded and closed, then any $q$-test plan on the metric measure space $(K,\sfd,\mm\res{K})$ is a $q$-test plan on $\Xdm$. In particular, passing to $\mm\res{K}$-a.e.\ equality and regarding $f \in L^p(\mm\res{K})$, it holds that
\[ \int f(\gamma_1) - f(\gamma_0)\,\d \pi \le {\rm Comp}(\pi)^{1/p} \rmKe_q^{1/q}(\pi) C_f,\]
for all $q$-test plans $\pi$ on $(K,\sfd,\mm\res{K})$. For what has already been proved, this implies 
\begin{equation}
f \in W^{1,p}(K) := W^{1,p}(K,\sfd,\mm\res{K})\qquad \text{and}\qquad \rmCh_p^K(f) \le  C_f^p.
\label{eq:compact membership}
\end{equation}
Notice the key fact that here $K\subset \X$ is arbitrary and thus, the latter estimate is uniform. We can thus consider an exhaustion $K_n\uparrow \X$ of closed sets with $\mm(K_n)<\infty$ for each $n\in\N$. Here we consider for instance closed balls $K_n\coloneqq \overline{B_{r_n}}(x)$ centred at a given point $x \in \X$ for a suitable sequence $r_n\uparrow \infty$ so that $r_n-r_{n-1} > n$. For each $n\in\N$, let $(f^n_k)_k \subset \Lip(K_n)$ be optimal for the definition of $\rmCh_p^{K_n}(f)$, i.e.\ so that 
\[
f_k^n \to f \text{ in }L^p(K_n)\qquad  \text{and}\qquad  \int \lip^p(f_k^n)\,\d\mm\res{K_n} \to \rmCh^{K_n}_p(f),
\]
as $k\uparrow \infty$. Let now $\nchi^n \colon K_n \to [0,1]$ be $(1/n)$-Lipschitz cut-offs satisfying:
\[
\nchi^n \equiv 1 \text{ on }K_{n-1}, \qquad \supp(\nchi^n) \subset B_{r_{n-1}+n}(x) \subsetneq K_n.
\] 
Notice that $\nchi^n f \in W^{1,p}(K_n)$ and, denoting $|D_{(n)} g|_p$ the function representing the $p$-Cheeger energy of $g \in W^{1,p}(K_n)$, we clearly have by definition of $p$-Cheeger energies on $\X$ that $ \nchi^n f  \in W^{1,p}(\X)$ and
\[
\int |D_{(n)} (\nchi^n f)|^p_p\,\d\mm\res{K_n} = \rmCh_p^{K_n}(\nchi^n f) = \rmCh_p(\nchi^n f).
\]
(the latter equality is due to the fact that the admissible relaxations for $\nchi^n f$ are the same). Combining everything, for every $n\in\N$, we have

\begin{align*}
\rmCh_p^{1/p}(\nchi^nf) &= \left(\int |D_{(n)}(f \nchi^{n})|^p_p\,\d \mm\res{K_n}\right)^{1/p} \\
&\overset{\eqref{eq:leibniz modificata}}{\le}\left(\int_{B_{r_{n-1}+n}} |D_{(n)}f|^p_p\,\d\mm\right)^{1/p}  +\left(\frac1{n^p}\int |f|^p \d\mm\right)^{1/p}\\
&\le \liminf_{k\to\infty} \left(\int_{B_{r_{n-1}+n}} |D_{(n)}f^{n}_k|^p_p\,\d\mm\right)^{1/p}+\frac1n \|f\|_{L^p}\\
&\le \liminf_{k\to\infty} \left(\int \lip^p(f^{n}_k)\,\d\mm\res{K_{n}}\right)^{1/p}+\frac1n \|f\|_{L^p} \\
&= \left(\rmCh_p^{K_{n}}(f)\right)^{1/p}+\frac1n \|f\|_{L^p}\overset{\eqref{eq:compact membership}}{\le} \frac1n \|f\|_{L^p}+C_f.
\end{align*}
Being $n\in\N$ arbitrary and $\chi_n f \to f$ in $L^p(\mm)$, we conclude the proof by lower semicontinuity \eqref{eq:W1psemicontinuous} of $W^{1,p}(\X)$ sending $n\uparrow \infty$.
\end{proof}
\begin{remark}\label{rmk:planboundedW1p}
\rm 
We notice for future use that (ii) in Theorem \ref{thm:Sobolevintegrated} holds if and only if
\begin{itemize}
\item[(ii')] there exists $C\ge 0$ so that
\[ \int f(\gamma_1) - f(\gamma_0)\,\d \pi \le {\rm Comp}(\pi)^{1/p} \rmKe_q^{1/q}(\pi) C,\]
for all $q$-test plans $\pi$ with bounded support.
\end{itemize}
Clearly, we only need to show that (ii') implies (ii). This follows by a standard approximation argument considering, e.g. the boundedly supported $q$-test plan 
\[
\Gamma_n \coloneqq \Big\{ \gamma \in AC([0,1],\X) \colon \sfd(\gamma_0 , \bar x) <n, \int_0^1|\dot \gamma_t|^q\,\d t < n\Big\} \subset C([0,1],\X),\qquad \pi_n\coloneqq\frac{\pi\res{\Gamma_n}}{\pi(\Gamma_n)}.
\]\fr 
\end{remark}
 For every $ \Omega\subset \X$ open and $f\in L^p(\mm)$, we can define the (possibly infinite) nonnegative constant
\[
   C_f(\Omega)\coloneqq \sup \,  \frac{\int f(\gamma_1) - f( \gamma_0)\,\d \pi}{{\rm Comp}(\pi)^{1/p} \rmKe_q^{1/q}(\pi)},\qquad C_f(\varnothing)=0,
\]
where the supremum is taken among all $q$-test plans $\pi$ well-contained in $\Omega$. 
\begin{corollary}\label{cor:identification}
Let $\Xdm$ be a metric measure space, let $p,q \in (1,\infty)$ be H\"older conjugate and $f \in L^p(\mm)$. Then, for every $f\in W^{1,p}(\X)$, the Carathéodory extension of $\Omega \mapsto C^p_f(\Omega)$ to the Borel $\sigma$-algebra 
\[
   B\mapsto \lambda_f(B)\coloneqq \inf\{ C^p_f(\Omega) \colon  B\subset \Omega, \,  \Omega \text{ open} \},
\]
 is a nonnegative finite measure satisfying
\[
    \lambda_f = |Df|_p^p\mm.
\]
\end{corollary}
\begin{proof}
Let us observe that we are not claiming, directly, that $\Omega \mapsto C_f^p(\Omega)$ extends to a finite Borel measure, but we shall achieve this fact indirectly by the identification result given by Theorem \ref{thm:Sobolevintegrated}.

Let us consider an optimal sequence $(f_n)_n\subset \Lip(\X)$ with $f_n\to f,\, \lip \, f_n \to |Df|_p$ in $L^p(\mm)$ (\cite{AmbrosioGigliSavare11-3}) so that, for every plan $\pi$ well contained in $\Omega$ and for any $n \in \N$, it holds that
\[
 \int f_n(\gamma_1)-f_n(\gamma_0)\,\d \pi \le \iint_0^1 \lip\, f_n(\gamma_t)|\dot \gamma_t|\,\d t \d \pi \le {\rm Comp}(\pi)^{1/p} \rmKe_q^{1/q}(\pi)\left(\int_\Omega ( \lip f_n)^p\,\d \mm\right)^{1/p}.
\]
By taking $n$ to infinity, and by arbitrariness of $\pi$, we directly deduce $C_f^p(\Omega) \le \int_\Omega |Df|_p^p\,\d \mm$. Here is the crucial point. Thanks to this discussion, the Carathéodory extension $\lambda_f$ satisfies (pointwise on the $\sigma$-algebra) the inequality
\[
    \lambda_f(B) \le \int_B|Df|_p^p\,\d \mm,
\]
Therefore, to conclude, it is enough to show that the converse inequality holds in the class of open sets, as this gives at the same time that $\lambda_f$ is a nonnegative measure and satisfies $\lambda_f = |Df|_p^p\mm$.

Let us then fix any open set $\varnothing \neq \Omega\subset \X$, and let $A_\delta\subset \Omega$ be open so that ${\rm dist}(A_\delta,\Omega^c)>\delta$ for $\delta>0$ sufficiently small. We have
\[
    \lambda_f(\Omega) \ge C^p_{f,\delta}(\overline{A}_\delta) =\int |D_{(\delta)}f|_p^p\,\d\mm \res{\overline{A}_\delta} \ge \int |D_{(\delta)}f|_p^p\,\d\mm \res{ A_{2\delta}} \ge \int_{A_{2\delta}}|Df|_p^p\,\d\mm ,
\]
where $C^p_{f,\delta}(\overline{A}_\delta),|D_{(\delta)} f|_p^p$ are obtained with respect to the metric measure space $(\overline A_\delta,\sfd,\mm\res{\overline{A}_\delta})$ and $f\in L^p(\mm\res{\overline A_\delta})$. Indeed, the first inequality holds as the constant $C_f(\Omega)$ is a competitor for $C_{f,\delta}(\overline A_\delta)$ (by the fact that a test plan in $(\overline A_\delta,\sfd,\mm\res{\overline A_\delta})$ is a test plan in $\X$ that is well-contained in $\Omega$), the second equality holds by Theorem \ref{thm:Sobolevintegrated}, and the last inequality holds since
\[
\int |D_{(\delta)}f|_p^p\,\d\mm \res{ A_{2\delta}} = \lim_{n\to \infty}\int_{A_{2\delta}} \lip_{(\delta)}( f_n ) ^p\,\d \mm  \ge \liminf_{n\to \infty}\int_{A_{2\delta}} |Df_n|_p^p \,\d \mm \ge \int_{A_{2\delta}} |Df|^p_p\,\d\mm,
\]
where $f_n \to f,\lip_{(\delta)} f_n \to |D_{(\delta)}f|_p$ in $L^p(\mm\res{\overline{A}_\delta})$ is an optimal sequence $(f_n)\subset \Lip(\overline A_\delta)$ for $f \in W^{1,p}(\overline A_\delta)$, and $\lip_{(\delta)} f_n$ is the local Lipschitz constant in  $(\overline A_\delta,\sfd)$ that, pointwise on $A_{2\delta}$ satisfies $\lip_{(\delta)} f_n = \lip f_n$. In the second inequality, we used the fact that $|Df_n|_p\le \lip f_n$ $\mm$-a.e.\ by minimality, and lastly the lower semicontinuity on open subsets. Since $A_\delta \uparrow\Omega$ as $\delta\downarrow 0$, we finally deduce $\lambda_f(\Omega)\ge \int_\Omega |Df|_p^p\,\d\mm$, concluding the proof.
\end{proof}
By a common abuse of notation, we shall  from now on suppress the notation $\lambda_f$ and only write $C_f(\cdot)$ and $C_f = C_f(\X)$.
\section{Polygonal interpolations on varying spaces}
In this section, we study approximation techniques in the Wasserstein space $(\PP_q(\X),W_q)$ when the base space varies and satisfies a uniform curvature dimension condition. We shall build up suitable polygonal approximations of test plans satisfying precise quantitative compression estimates and Kinetic energy conservation principles. We distinguish between the case $q \in(1,\infty)$ and $q=\infty$, as the two analyses present some key differences.

\subsection{The case $q<\infty$}
We develop a standard, yet important for our goals, construction to approximate bounded and boundedly supported probability measures of the limit space.
\begin{lemma}\label{lem:approx muinf} 
 Let $\Xdmxn$ be a sequence of pointed metric measure spaces pmG-converging to some $\Xdmxinf$. Fix also a realization $(\Z,\sfd)$. Let $\rho\in L^\infty(\mm_\infty)$ be a probability density with ${\rm supp}(\rho)\subset B$ for some bounded set $B\subset \Z$.  Fix $\eps>0$, and let $I_\eps(B):=\{z \in \Z: \d(z,B) <\eps \}$. Then, for all $n \in \N$ there is a probability density $\rho_n \in L^\infty(\mm_n)$ so that:
\begin{subequations}\begin{align}
&\supp(\rho_n)\subset  I_\eps(B), \label{eq:supports B}\\
&\limsup_{n\uparrow\infty}\|\rho_n\|_{L^\infty(\mm_n)}\leq \|\rho\|_{L^\infty(\mm_\infty)},\label{eq:limsdens}\\
&\rho_n \to\rho \qquad L^q\text{-strong for all } q\in[1,\infty). \label{eq:Lp strong rho}
\end{align}\end{subequations}
\end{lemma}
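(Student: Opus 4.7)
The plan is to construct $\rho_n$ by first approximating $\rho$ at the ambient level with functions in $C_c(\Z)$, and then to normalize using the pmGH-convergence $\mm_n \to \mm_\infty$ in duality with $C_{bs}(\Z)$ together with a diagonal extraction. Set $M\coloneqq \|\rho\|_{L^\infty(\mm_\infty)}$ and fix a cutoff $\chi\in C_c(\Z)$ with $0\le \chi\le 1$, $\chi\equiv 1$ on $B$ and $\supp(\chi)\subset \tfrac{3}{2}B$. By density of $C_c(\Z)$ in $L^1(\mm_\infty)$ pick $\tilde{\varphi}_k\in C_c(\Z)$ with $\tilde{\varphi}_k\to \rho$ in $L^1(\mm_\infty)$, and set
\[
    \varphi_k \coloneqq \big(0\vee (\chi\tilde{\varphi}_k)\big)\wedge M\in C_c(\Z).
\]
Since $\chi\equiv 1$ on $\supp(\rho)$ and $0\le \rho\le M$, both multiplication by $\chi$ and truncation onto $[0,M]$ are $L^1$-nonexpansive on $\rho$, so $\varphi_k\to \rho$ in $L^1(\mm_\infty)$ while $0\le \varphi_k\le M$ and $\supp(\varphi_k)\subset \tfrac{3}{2}B\subset 2B$. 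The pointwise bound $|\varphi_k-\rho|\le 2M\mathbbm{1}_{2B}$ combined with $\mm_\infty(2B)<\infty$ gives $\|\varphi_k-\rho\|_{L^q(\mm_\infty)}^q\le (2M)^{q-1}\|\varphi_k-\rho\|_{L^1(\mm_\infty)}\to 0$ for every $q\in[1,\infty)$.

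Now choose a countable family $\{\psi_j\}\subset C_{bs}(\Z)$ dense on each bounded set in the uniform norm. For each fixed $k\in\N$, the functions $\varphi_k$, $\varphi_k^q$ ($q\in\N$) and $\psi_j\varphi_k$ ($j\in\N$) all belong to $C_c(\Z)$, so pmGH-convergence gives $\int g\,\d\mm_n\to\int g\,\d\mm_\infty$ for each such $g$. A standard diagonal extraction produces an increasing sequence $k(n)\to \infty$ along which, for every fixed $j,q\in\N$,
\[
    \int \varphi_{k(n)}\,\d\mm_n-\int \varphi_{k(n)}\,\d\mm_\infty\to 0,\qquad \int \varphi_{k(n)}^q\,\d\mm_n-\int \varphi_{k(n)}^q\,\d\mm_\infty\to 0,
\]
and analogously with $\psi_j\varphi_{k(n)}$ in place of $\varphi_{k(n)}$. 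Setting $c_n\coloneqq \int \varphi_{k(n)}\,\d\mm_n$, the diagonal together with the $L^1$-approximation above yields $c_n\to \int \rho\,\d\mm_\infty=1$, so that
\[
    \rho_n\coloneqq \frac{\varphi_{k(n)}}{c_n}
\]
is a well-defined probability density on $\mm_n$ for $n$ large (for small $n$ choose any bounded probability density supported in $2B$, which does not affect the limiting properties).

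Properties \eqref{eq:supports B}--\eqref{eq:Lp strong rho} are then verified directly: \eqref{eq:supports B} is immediate from $\supp(\varphi_{k(n)})\subset \tfrac{3}{2}B$, and \eqref{eq:limsdens} follows from $\|\rho_n\|_{L^\infty(\mm_n)}\le M/c_n\to M$. For \eqref{eq:Lp strong rho}, testing against $\psi\in\{\psi_j\}$ and combining the diagonal with $\varphi_k\to\rho$ in $L^1(\mm_\infty)$ gives
\[
    \int \psi\rho_n\,\d\mm_n=\frac{1}{c_n}\int \psi\varphi_{k(n)}\,\d\mm_n\to \int \psi\rho\,\d\mm_\infty,
\]
and density of $\{\psi_j\}$ together with $\sup_n\|\rho_n\|_{L^1(\mm_n)}=1$ extends this to all $\psi\in C_{bs}(\Z)$, giving $\rho_n\mm_n\weakto \rho\mm_\infty$. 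Similarly $\|\rho_n\|_{L^q(\mm_n)}^q=c_n^{-q}\int \varphi_{k(n)}^q\,\d\mm_n\to \|\rho\|^q_{L^q(\mm_\infty)}$, which together with the weak convergence yields the $L^q$-strong convergence in the sense of Definition \ref{def:Lp convergence pmGH}. The only mildly delicate step is the simultaneous diagonal control of the countably many integrals $\int g\,\d\mm_n$ with $g$ depending on $n$; this is handled routinely via separability of $C_{bs}(\Z)$ on bounded sets and metrizability of pmGH-convergence on the separable ambient space $\Z$.
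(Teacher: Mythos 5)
Your argument is correct but follows a genuinely different route from the paper. The paper starts by invoking the result from \cite{GMS15} that there exist $g_n\in L^2(\mm_n)$ converging $L^2$-strong to $\rho$, applies the same cutoff/truncation $\tilde g_n\coloneqq 0\vee(\nchi g_n)\wedge L$, and then proves a separate claim, via the graph characterization of $L^2$-strong convergence from \cite[Prop.\ 5.4.4]{AmbrosioGigliSavare08}, that $\varphi\circ\tilde g_n\to\varphi\circ\rho$ $L^2$-strong for all continuous $\varphi$ with $\varphi(0)=0$; choosing $\varphi(z)=|z|^{q/2}$ (and $\sgn(z)\sqrt{|z|}$ for $q=1$) then yields $L^q$-strong convergence for all $q$. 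You instead approximate $\rho$ in $L^1(\mm_\infty)$ by compactly supported continuous functions $\varphi_k$ \emph{on the ambient $\Z$}, truncate, and exploit the weak$^*$ convergence $\mm_n\weakto\mm_\infty$ directly via a diagonal extraction $k(n)\uparrow\infty$. Because your approximants live on $\Z$ rather than on the varying $\X_n$, compositions with continuous functions become trivial, so you never need the graph-based composition lemma. This buys self-containedness (no appeal to the $L^2$-strong approximation existence result) at the cost of a more delicate diagonal bookkeeping. A few points should be tightened: (1) the case $q=1$ is not covered by your explicit diagonal, since by Definition \ref{def:Lp convergence pmGH} $L^1$-strong is defined as $L^2$-strong convergence of $\sigma\circ\rho_n=\sqrt{\rho_n}$, which requires adding the continuous functions $\sqrt{\varphi_k}$ and $\psi_j\sqrt{\varphi_k}$ to the diagonal; (2) you diagonalize only over $q\in\N$, whereas the statement asks for all $q\in[1,\infty)$, so you either need to diagonalize over a dense set of exponents or add a short equicontinuity-in-$q$ argument using the uniform $L^\infty$ bound and $\sup_n\mm_n(2B)<\infty$; (3) the phrase ``$\{\psi_j\}$ dense on each bounded set in the uniform norm'' is too strong since $\overline{2B}$ need not be compact (so $C_b(\overline{2B})$ may be non-separable) --- what you actually need is a countable \emph{convergence-determining} family of bounded Lipschitz functions with bounded support, which exists by separability of $\Z$ and suffices for the density step. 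All three fixes are routine and do not affect the structure of the argument.
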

\begin{proof}
Let $\nchi \colon \Z \to [0,1]$ be a Lipschitz cut-off with $\nchi \equiv 1$ on $B$ and $\supp(\nchi)\subset I_\eps(B)$. For instance, consider
\[
\nchi \coloneqq (1- 2\sfd(\cdot,B) /\eps) \vee 0.
\]
Set $L \coloneqq \|\rho\|_{L^\infty(\mm_\infty)}$. By \cite{GMS15}, there is $g_n \in L^2(\mm_n)$ that is $L^2$-strong converging to $\rho$. Define
\[
\tilde g_n \coloneqq  0 \vee (\nchi g_n) \wedge L,\qquad \rho_n \coloneqq \frac{\tilde g_n }{\|\tilde g_n\|_{L^1(\mm_n)}},
\]
for all $n\in\N$ when the latter makes sense. We claim that $\rho_n$ is well-defined for all $n$ large enough and does the job. By \cite[a),c) in Proposition 3.3]{AmbrosioHonda17}, we have that $\tilde g_n$ converges $L^2$-strong to $\rho$. We claim now that $ \varphi \circ \tilde g_n$ converges $L^2$-strong to $\varphi \circ \rho$ for every $\varphi \in C(\R)$ with $\varphi(0) =0 $. This will be possible since supports are uniformly bounded. From the characterization of $L^2$-strong convergence via weak convergence of graphs \cite[Prop. 5.4.4]{AmbrosioGigliSavare08} (see also \cite[Eq. (6.6)]{GMS15}), we can equivalently prove 
\begin{equation}
\int \xi(x,\varphi(\tilde g_n))\,\d\mm_n\to \int \xi(x,\varphi(\rho))\,\d \mm_\infty
\label{eq:claim 2 strong}
\end{equation}
as $n\uparrow\infty$ for every $\xi \in C(\X\times \R)$ with $|\xi(x,t)|\le \phi(x)+C|t|^2$ for some $\phi \in C_{bs}(\X)$ positive and $C>0$. Notice that we can assume $\xi(x,0)=0$ with no loss of generality (if (\ref{eq:claim 2 strong}) holds for $\xi(x,t)-\xi(x,0)$, then it holds for $\xi$). 
Thus, consider any such $\xi \in C(\X\times\R)$, and notice that, under our assumptions, choosing $\overline{\nchi}\in {\rm Lip}_{bs}(Z)$ with $\overline{\nchi}\equiv 1$ on $2B$, the function $ \bar \xi(x,t) \coloneqq \overline{\nchi}(x)\xi ( x, \varphi (-L \vee t \wedge L)) \in C(\X\times\R)$ satisfies (for possibly a bigger $C>0$)
\[
 |\bar \xi(x,t)| \le \overline{\nchi}(x)(\phi(x) + C|\varphi(-L \vee t \wedge L)|^2) \le \overline{\nchi}(x)(\phi(x)+C) + C|t|^2, 
\]
as $|\varphi(t)|^2(1+|t|^2)^{-1}$ is bounded in $[-L,L]$.
Thus, since $\tilde g_n$ converges $L^2$-strong to $\rho$ again by the characterization via graphs, we have
\[
\int \bar \xi (x,\tilde g_n) \,\d\mm_n \to  \int \bar \xi(x,\rho)\,\d \mm_\infty,
\]
as $n\uparrow \infty$. Rewriting the integral, this is precisely \eqref{eq:claim 2 strong} and the claim is therefore proved.

Now, taking $\varphi(z) = |z|^{q/2}$ if $q \in (1,\infty)$ and $\varphi(z) = {\rm sgn(z)}\sqrt{|z|}$ if  $q=1$, we see that $L^2$-strong convergence of $\varphi \circ \tilde g_n$ to $\varphi \circ \rho$ follows using the previous claim, for all $q \in [1,\infty)$. This immediately implies that $\tilde g_n$ converges $L^q$-strong to $\rho$ for every $q \in [1,\infty)$, by definition. The proof is concluded since, by $L^1$-strong convergence, we deduce that, for all $n$ large enough, $\rho_n$ is a well-defined probability density satisfying \eqref{eq:supports B} by construction. The conclusions \eqref{eq:limsdens},\eqref{eq:Lp strong rho} follow from those of $\tilde g_n$.
\end{proof}
We next build polygonal geodesic interpolation along varying spaces with key estimates. We start with an easy case when the spaces under consideration have nonnegative curvature.
\begin{proposition}\label{prop:q_polygonal K>0}
    Let $q\in(1,\infty)$ be fixed and let $\Xdmxn$ be a sequence of pointed metric measure spaces satisfying ${\sf CD}_q(0,\infty)$. Assume that $\X_n \overset{pmG}{\rightarrow} \X_\infty$ for some $\Xdmxinf$. 

    Then, for every $q$-test plan $\eta  \in \PP(C([0,1],\X_\infty))$ with bounded support, there are $q$-test plans $\pi_n \in \PP(C([0,1],\X_n))$ satisfying
    \[
    \limsup_{n\uparrow\infty} \Comp(\pi_n)\le \Comp(\eta),\qquad \limsup_{n\uparrow\infty}\rmKe_q(\pi_n)\le \rmKe_q(\eta),
    \]
    and with the property
    \[
    \frac{\d (\e_0)_\sharp \pi_n}{\d \mm_n}\to \frac{\d (\e_0)_\sharp \eta}{\d \mm_\infty},\qquad  \frac{\d (\e_1)_\sharp \pi_n}{\d \mm_n}\to \frac{\d (\e_1)_\sharp \eta}{\d \mm_\infty},\qquad   \text{in }L^q\text{-strong}.
    \]
\end{proposition}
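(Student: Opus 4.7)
The strategy is to build each $\pi_n$ as a single optimal dynamical plan on $\X_n$ interpolating between suitable approximations of the two endpoint marginals of $\eta$. The assumption $K=0$ is what makes a one-shot interpolation sufficient: Theorem \ref{thm:rajalaCDqKN} then provides density estimates along geodesics without any exponential penalty in the diameter, so no polygonal refinement is needed.

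First, write $(\e_i)_\sharp\eta=\rho_i^\infty\mm_\infty$ for $i=0,1$. Since $\eta$ has bounded support and compression $\Comp(\eta)<\infty$, each $\rho_i^\infty$ is bounded above by $\Comp(\eta)$ and is supported in a common ball $B\subset\Z$. Apply Lemma \ref{lem:approx muinf} separately to $\rho_0^\infty$ and $\rho_1^\infty$: this produces probability densities $\rho_i^n\in L^\infty(\mm_n)$ supported in $2B$, converging $L^q$-strong to $\rho_i^\infty$, and satisfying $\limsup_n\|\rho_i^n\|_{L^\infty(\mm_n)}\leq\|\rho_i^\infty\|_{L^\infty(\mm_\infty)}$.

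Next, for each $n$, the supports of $\rho_0^n\mm_n$ and $\rho_1^n\mm_n$ are contained in $2B$, so any element of $\OptGeo_q(\rho_0^n\mm_n,\rho_1^n\mm_n)$ is concentrated on geodesics of length at most $D:=\diam(2B)<\infty$, a bound which is independent of $n$. Since $\X_n$ is $\CD_q(0,\infty)$, Theorem \ref{thm:rajalaCDqKN} (whose exponential prefactor collapses to $1$ when $K=0$) furnishes $\pi_n\in\OptGeo_q(\rho_0^n\mm_n,\rho_1^n\mm_n)$ such that the time-$t$ density satisfies
\[
\left\|\frac{\d(\e_t)_\sharp\pi_n}{\d\mm_n}\right\|_{L^\infty(\mm_n)}\leq\max\bigl(\|\rho_0^n\|_{L^\infty(\mm_n)},\|\rho_1^n\|_{L^\infty(\mm_n)}\bigr),\qquad\forall t\in[0,1].
\]
Passing to $\limsup$ in $n$ and using the previous step,
\[
\limsup_n\Comp(\pi_n)\leq\max\bigl(\|\rho_0^\infty\|_{L^\infty(\mm_\infty)},\|\rho_1^\infty\|_{L^\infty(\mm_\infty)}\bigr)\leq\Comp(\eta).
\]
By \eqref{eq:dynOPq}, $\rmKe_q(\pi_n)=W_q^q(\rho_0^n\mm_n,\rho_1^n\mm_n)$. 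Since the marginals $\rho_i^n\mm_n$ are supported in the fixed bounded set $2B\subset\Z$ and converge weakly to $\rho_i^\infty\mm_\infty$ (from the $L^q$-strong convergence of the previous step), the standard continuity of $W_q$ on probabilities with uniformly bounded supports gives $W_q(\rho_0^n\mm_n,\rho_1^n\mm_n)\to W_q(\mu_0^\infty,\mu_1^\infty)$. Combining with Hölder's inequality applied curve-by-curve in $\eta$,
\[
\limsup_n\rmKe_q(\pi_n)=W_q^q(\mu_0^\infty,\mu_1^\infty)\leq\int\sfd(\gamma_0,\gamma_1)^q\,\d\eta\leq\int\!\!\int_0^1|\dot\gamma_s|^q\,\d s\,\d\eta=\rmKe_q(\eta).
\]
Finally, the endpoint $L^q$-strong convergence is built-in, since $(\e_i)_\sharp\pi_n=\rho_i^n\mm_n$ by construction.

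The key simplification here is that under $K=0$ a single Wasserstein geodesic transports all the desired information; the truly delicate point, which becomes the main obstacle in the general $K<0$ analogue, is the compression bound. In that case Theorem \ref{thm:rajalaCDqKN} acquires the factor $e^{K^- D^2/12}$ with $D=\diam(2B)$, which does not vanish asymptotically. Recovering the sharp compression bound then forces one to interpolate through many intermediate marginals in order to shrink the relevant diameter, which is precisely the polygonal construction announced in the introduction and carried out in Proposition \ref{prop:q_polygonal}.
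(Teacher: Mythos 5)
Your proof is correct and follows essentially the same route as the paper: approximate the endpoint marginals via Lemma \ref{lem:approx muinf}, apply Theorem \ref{thm:rajalaCDqKN} (noting the exponential factor is $1$ for $K=0$) to a single $q$-optimal dynamical plan to get the compression bound, and control the kinetic energy via $W_q$-continuity under weak convergence on uniformly bounded supports plus the curve-by-curve Jensen/H\"older estimate $\sfd(\gamma_0,\gamma_1)^q \le \int_0^1 |\dot\gamma_s|^q\,\d s$. Your closing remark about why $K<0$ forces a polygonal refinement matches the paper's motivation for Proposition \ref{prop:q_polygonal}.
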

\begin{proof}
Assume, for some ball $B\subset \Z$, that the image of $\eta$-a.e.\ curve $\gamma$ lies inside $B$. Let us consider, by Lemma \ref{lem:approx muinf}, the probability measures $(\rho_{0,n})_n ,(\rho_{1,n})_n $ converging $L^q$-strong to $ \frac{\d (\e_0)_\sharp \eta}{\d \mm_\infty}, \frac{\d (\e_1)_\sharp \eta}{\d \mm_\infty}$  with supports uniformly contained in $2B\subset \Z$. Consider $\pi_n \in \OptGeo_q(\rho_{0,n}\mm_n,\rho_{1,n}\mm_n)$ given by Theorem \ref{thm:rajalaCDqKN},  thus satisfying 
\[
\limsup_{n\uparrow\infty}\Comp(\pi_n)\le\limsup_{n\uparrow\infty}\|\rho_{0,n}\|_{L^\infty(\mm_n)}\vee \|\rho_{1,n}\|_{L^\infty(\mm_n)}\le \Comp(\eta).
\]
Finally, since  $\rho_{0,n},\rho_{1,n}$ are supported on uniformly bounded set, weak convergence implies $W_q$-convergence giving in turn, by optimality of $\pi_n$, that
\[
\limsup_{n\to\infty}\rmKe^{1/q}_q(\pi_n) = \lim_{n\to\infty}W_q( \rho_{0,n}\mm_n,\rho_{1,n}\mm_n ) =W_q((\sfe_0)_\sharp \eta,(\sfe_1)_\sharp\eta) \le  \rmKe^{1/q}_q(\eta),
\]
having used, lastly, that $(\sfe_0,\sfe_1)_\sharp \eta  \in {\rm Adm}((\sfe_0)_\sharp \eta,(\sfe_1)_\sharp\eta)$.
\end{proof}
We next face the general case of $K\in\R$ and further assume the validity of the strong curvature dimension condition to couple Theorem \ref{thm:rajalaCDqKN} and Remark \ref{rem:rajala improved}. In this case, we shall work out a polygonal geodesic interpolation rather than a single geodesic interpolation. We first face two preliminary lemmas that will be employed in the proof of Proposition \ref{prop:q_polygonal} below.
\begin{lemma}\label{lem:subplan is optimal}
Let $q\in[1,\infty)$ be fixed,  let $(\X,\sfd)$ be complete and separable and let $\mu_0,\mu_1 \in \PP(\X)$. If $\pi \in {\rm OptGeo}_q (\mu_0 ,\mu_1) $ and $f \in L^1(\pi)$ with $0\le f \le 1$, then it holds $\eta \in \OptGeo_q\left((\e_0)_\sharp\eta,(\e_1)_\sharp \eta \right)$, where $\eta \coloneqq  (\|f\|_{L^1(\pi)})^{-1} f \pi$. In particular, if $g\in L^1(\mu_0)$ with $0\le g \le 1$ then
\[
     \eta=\frac{g\circ\e_0}{\| g\|_{L^1(\mu_0)}} \pi,\qquad  \eta \in \OptGeo_q\left( \frac{g\mu_0}{\|g\|_{L^1(\mu_0)}}, (\e_1)_\sharp \eta\right).
\]
\end{lemma}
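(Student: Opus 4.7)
The statement is a standard restriction principle: any subplan of a $q$-optimal dynamical plan, once renormalized, remains $q$-optimal between its own marginals. I would prove it by a direct ``splitting + contradiction'' argument on the cost, together with an elementary push-forward identity for the corollary.

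First, I observe that since $0 \le f \le 1$ we have the convex combination
\[
\pi \;=\; \|f\|_{L^1(\pi)}\,\eta \;+\; \|1-f\|_{L^1(\pi)}\,\tilde\eta,
\qquad \tilde\eta \coloneqq \|1-f\|_{L^1(\pi)}^{-1}(1-f)\pi,
\]
(if $\|1-f\|_{L^1(\pi)} = 0$ there is nothing to do, since then $\eta = \pi$). Pushing forward by $(\e_0,\e_1)$ gives an analogous convex decomposition of $(\e_0,\e_1)_\sharp\pi$ into two admissible plans between the corresponding marginals, and hence
\[
W_q^q(\mu_0,\mu_1) \;=\; \|f\|_{L^1(\pi)}\!\int\!\sfd^q\,\d(\e_0,\e_1)_\sharp\eta \;+\; \|1-f\|_{L^1(\pi)}\!\int\!\sfd^q\,\d(\e_0,\e_1)_\sharp\tilde\eta.
\]

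Next, suppose by contradiction there existed $\alpha'\in\Adm((\e_0)_\sharp\eta,(\e_1)_\sharp\eta)$ with strictly smaller $q$-cost than $(\e_0,\e_1)_\sharp\eta$. Then the convex combination $\|f\|_{L^1(\pi)}\alpha' + \|1-f\|_{L^1(\pi)}(\e_0,\e_1)_\sharp\tilde\eta$ would be admissible between $\mu_0$ and $\mu_1$ and have cost strictly smaller than $W_q^q(\mu_0,\mu_1)$, a contradiction. Therefore $(\e_0,\e_1)_\sharp\eta$ is $W_q$-optimal. Since $\pi$ is concentrated on $\Geo(\X)$ and $f\ge 0$, so is $\eta$, and its evaluations at $0$ and $1$ give its marginals by definition; hence $\eta \in \OptGeo_q((\e_0)_\sharp\eta,(\e_1)_\sharp\eta)$.

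For the ``in particular'' part, I set $f \coloneqq g\circ\e_0$, which satisfies $0 \le f \le 1$, and compute $\|f\|_{L^1(\pi)} = \int g\circ\e_0\,\d\pi = \int g\,\d\mu_0 = \|g\|_{L^1(\mu_0)}$. The push-forward identity $(\e_0)_\sharp(g\circ\e_0\cdot\pi) = g\,\mu_0$ (check on a Borel set $B$ via $(g\circ\e_0\,\pi)(\e_0^{-1}(B)) = \int_B g\,\d\mu_0$) then yields $(\e_0)_\sharp\eta = g\mu_0/\|g\|_{L^1(\mu_0)}$, and the first part applied to this $f$ gives the required optimality. There is no substantial obstacle here; the only point of care is handling the degenerate case $\|1-f\|_{L^1(\pi)}=0$, which is trivial since then $\eta$ coincides with $\pi$.
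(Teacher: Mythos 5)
Your proof is correct and takes essentially the same route as the paper's: both decompose $\pi$ as $\|f\|_{L^1(\pi)}\eta + (1-f)\pi$, suppose a strictly cheaper competitor between the marginals of $\eta$ exists, splice it back into the decomposition to produce an admissible plan between $\mu_0,\mu_1$ with cost strictly below $W_q^q(\mu_0,\mu_1)$, and conclude by contradiction. The only cosmetic difference is that you splice in a static competitor $\alpha'\in\Adm$, while the paper splices in a dynamical competitor $\bar\eta\in\OptGeo_q$ and then passes to $(\e_0,\e_1)_\sharp$; your explicit remark that $\eta$ remains concentrated on $\Geo(\X)$ and your treatment of the degenerate case $\|1-f\|_{L^1(\pi)}=0$ are both welcome points of care that the paper leaves implicit.
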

\begin{proof}
    The last conclusion is straightforward choosing $f\coloneqq g\circ \e_0$. We shall only prove that $\eta \in \OptGeo_q\left((\e_0)_\sharp\eta,(\e_1)_\sharp \eta \right)$. If not, there exists $ \bar \eta \in \OptGeo_q\left((\e_0)_\sharp\eta,(\e_1)_\sharp \eta \right)$ so that
    \[
        W_q\left((\e_0)_\sharp\eta,(\e_1)_\sharp \eta \right) = \| \sfd(\e_0,\e_1)\|_{L^q(\bar \eta)}< \| \sfd(\e_0,\e_1)\|_{L^q(\eta)}.
    \]
    By optimality and straightforward manipulations, we obtain
    \begin{equation}\label{eq:W contradiction}
    \begin{aligned} 
        W_q^q(\mu_0,\mu_1)  &=   \|f\|_{L^1(\pi)} \|\sfd(\e_0,\e_1)\|_{L^q(\eta)}^q + \|\sfd(\e_0,\e_1)\|_{L^q((1-f) \pi)}^q \\
        &>\|f\|_{L^1(\pi)}  \| \sfd(\e_0,\e_1)\|^q_{L^q(\bar \eta)}  + \|\sfd(\e_0,\e_1)\|_{L^q((1-f) \pi)}^q =\| \sfd(\e_0,\e_1)\|_{L^q (\bar \pi)}^q,
    \end{aligned}
    \end{equation}
    having set $\bar \pi \coloneqq \|f\|_{L^1(\pi)}\bar \eta + (1-f) \pi$. Notice, by construction, that $\bar \pi \in \PP(C([0,1],\X))$ and 
    \[
    (\e_i)_\sharp \bar \pi = \|f\|_{L^1(\pi)}(\e_i)_\sharp \bar \eta + (\e_i)_\sharp(1-f \pi) = (\e_i)_\sharp (f \pi) + (\e_i)_\sharp(1-f\pi) = (\e_i)_\sharp \pi,
    \]
    for $i=0,1$. Therefore, $(\e_0,\e_1)_\sharp \bar \pi \in {\rm Adm}(\mu_0,\mu_1)$ which yields a contradiction with \eqref{eq:W contradiction}.
\end{proof}
\begin{proposition}\label{propoptsubplan}
    Let $q\in [1,\infty),M\in\mathbb{N}$ be fixed and let $\Xdm$ be a metric measure space. Let $(\tilde{\rho}_i )_{i=0}^{M}\subset L^\infty (\mm)$ be probability measures and let $\tilde{\eta}_i \in \OptGeo_q (\tilde \rho_i \mm,\tilde \rho_{i+1}\mm)$ for $i=0,\ldots M-1$. Suppose there is $\Gamma_i \subset C([0,1],\X)$ Borel so that $\sum_{i=0}^{M-1}\tilde{\eta}_i (\Gamma_i^c)\leq \frac{1}{2}$. Then, there exist probability densities $(\rho_i )_{i=0}^{M}\subset L^\infty (\mm)$ and $\eta_i \in \OptGeo_q (\rho_i \mm,\rho_{i+1}\mm)$, with $\supp(\rho_i) \subseteq \supp (\tilde{\rho}_i)$ and $\supp (\eta_i) \subseteq \supp (\tilde{\eta}_i)$, such that
    \begin{itemize}
        \item[{\rm (a)}] $\eta_i(\Gamma_i^c)=0$ for all $i=0,\dots,M-1$;
        \item[{\rm (b)}] $\|\rho_i\|_{L^\infty(\mm)}\leq \left(\frac{1}{1-\sum_{j=0}^{M-1}\tilde \eta_j(\Gamma_j^c)}\right) \|\tilde{\rho}_i\|_{L^\infty(\mm)}$ for all $i=0,\dots M$;
        \item[{\rm (c)}]  $\|\tilde{\rho_i}-\rho_i\|_{L^1(\mm)}\leq 2\sum_{j=0}^{M-1}\tilde{\eta}_j (\Gamma_j^c)$ for all $i=0,\dots M$;
        \item[{\rm (d)}] $\rmKe_q(\eta_i) \le \left(\frac{1}{1-\sum_{j=0}^{M-1}\tilde \eta_j(\Gamma_j^c)}\right)\rmKe_q(\tilde \eta_i)$ for all $i=0,\dots,M-1$;
    \end{itemize}
\end{proposition}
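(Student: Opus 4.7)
\textbf{Proof plan for Proposition \ref{propoptsubplan}.} The strategy is to glue the plans $\tilde\eta_0,\ldots,\tilde\eta_{M-1}$ into a single multi-time measure $\tilde\pi$ on $C([0,1],\X)^M$, restrict it to the set of ``entirely good'' trajectories, and then read the conclusion off the projections. The key observation is that by the (iterated) gluing lemma, using the matching identity $(\e_1)_\sharp\tilde\eta_i=(\e_0)_\sharp\tilde\eta_{i+1}=\tilde\rho_{i+1}\mm$, we may construct $\tilde\pi\in\PP\big(C([0,1],\X)^M\big)$ so that $(P_i)_\sharp\tilde\pi=\tilde\eta_i$ for every $i$ and so that $\tilde\pi$ is concentrated on the set where $\gamma_i(1)=\gamma_{i+1}(0)$ for all $i=0,\ldots,M-2$. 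Here $P_i$ denotes the projection onto the $i$-th factor.

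Next, set $A\coloneqq\{(\gamma_0,\ldots,\gamma_{M-1})\in C([0,1],\X)^M : \gamma_i\in\Gamma_i\text{ for every }i\}$, so that a union bound together with the assumption yields $\tilde\pi(A^c)\le\sum_{i=0}^{M-1}\tilde\eta_i(\Gamma_i^c)\eqqcolon\delta\le\tfrac12$. Define $\pi\coloneqq(1-\delta')^{-1}\tilde\pi\res{A}$ where $\delta'\coloneqq\tilde\pi(A^c)\le\delta$, and let
\[
\eta_i\coloneqq(P_i)_\sharp\pi,\qquad \rho_i\mm\coloneqq(\e_0\circ P_i)_\sharp\pi\;\;(i=0,\ldots,M-1),\qquad \rho_M\mm\coloneqq(\e_1\circ P_{M-1})_\sharp\pi.
\]
Since $\tilde\pi$ lives on trajectories whose successive endpoints match, the same holds for $\pi$, so $(\e_1)_\sharp\eta_i=(\e_0)_\sharp\eta_{i+1}=\rho_{i+1}\mm$ automatically. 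By disintegrating $\tilde\pi$ along $P_i$, one computes that $(P_i)_\sharp(\tilde\pi\res A)=f_i\tilde\eta_i$ for some Borel $f_i\colon C([0,1],\X)\to[0,1]$ with $\|f_i\|_{L^1(\tilde\eta_i)}=1-\delta'$. Hence $\eta_i=\|f_i\|_{L^1(\tilde\eta_i)}^{-1}f_i\,\tilde\eta_i$ and Lemma \ref{lem:subplan is optimal} yields $\eta_i\in\OptGeo_q(\rho_i\mm,\rho_{i+1}\mm)$. The inclusions $\supp(\eta_i)\subseteq\supp(\tilde\eta_i)$ and $\supp(\rho_i)\subseteq\supp(\tilde\rho_i)$ are immediate from $\eta_i\le(1-\delta')^{-1}\tilde\eta_i$.

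The verifications of (a)--(d) are now quantitative bookkeeping. For (a), if $\gamma_i\notin\Gamma_i$ then the fiber of $P_i$ over $\gamma_i$ misses $A$, so $f_i\equiv 0$ off $\Gamma_i$ and thus $\eta_i(\Gamma_i^c)=0$. For (b), one has $\rho_i\mm\le(1-\delta')^{-1}(\e_0\circ P_i)_\sharp\tilde\pi=(1-\delta')^{-1}\tilde\rho_i\mm$, which gives the pointwise bound and hence the $L^\infty$ one, with $\delta'$ replaced by its upper bound $\delta$ in the statement. For (c), decomposing $\tilde\rho_i\mm=\mu+\nu$ with $\mu\coloneqq(\e_0\circ P_i)_\sharp(\tilde\pi\res A)$ (mass $1-\delta'$) and $\nu\coloneqq(\e_0\circ P_i)_\sharp(\tilde\pi\res{A^c})$ (mass $\delta'$), one has $\rho_i\mm-\tilde\rho_i\mm=\tfrac{\delta'}{1-\delta'}\mu-\nu$, so the total variation is at most $\delta'+\delta'=2\delta'\le 2\delta$. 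Finally (d) follows since along geodesics $|\dot\gamma_t|$ is constant in $t$, so $\rmKe_q(\eta_i)=(1-\delta')^{-1}\int\sfd(\gamma_0,\gamma_1)^q f_i\,\d\tilde\eta_i\le(1-\delta')^{-1}\rmKe_q(\tilde\eta_i)$.

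The only conceptually delicate point is the gluing step: we need one global measure whose marginals are the given $\tilde\eta_i$ and whose curves concatenate at matching endpoints, because naively restricting each $\tilde\eta_i$ to $\Gamma_i$ individually would in general destroy the matching $(\e_1)_\sharp\eta_i=(\e_0)_\sharp\eta_{i+1}$. Once this iterated gluing is in place, all four estimates reduce to standard manipulations with disintegrations and the fact that optimality of $\tilde\eta_i$ is preserved under reweighting by a bounded density, as guaranteed by Lemma \ref{lem:subplan is optimal}.
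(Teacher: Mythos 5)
Your proof is correct, and it takes a genuinely different route from the paper's. The paper proceeds by an explicit iteration over $j=1,\dots,M+1$: at step $j$ it restricts $\tilde\eta_{j-1}$ to $\Gamma_{j-1}$ and then propagates the resulting density deficit both forward (through $\tilde\eta_j,\tilde\eta_{j+1},\dots$) and backward (through $\tilde\eta_{j-2},\dots,\tilde\eta_0$) by reweighting via the ratios $\rho_i^j/\rho_i^{j-1}$, keeping track of the $L^1$ loss at each of the $M$ stages. Your approach instead builds a single global measure $\tilde\pi$ on $C([0,1],\X)^M$ by iterated gluing along the matching marginals $(\e_1)_\sharp\tilde\eta_i=(\e_0)_\sharp\tilde\eta_{i+1}$, restricts once to the ``all-good'' event $A=\bigcap_i P_i^{-1}(\Gamma_i)$, normalizes, and reads off the marginals. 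This collapses the paper's $M$-step bookkeeping into a single union bound $\tilde\pi(A^c)\le\sum_i\tilde\eta_i(\Gamma_i^c)$, and the matching condition $(\e_1)_\sharp\eta_i=(\e_0)_\sharp\eta_{i+1}$ is automatic because $\tilde\pi$ lives on concatenating trajectories, rather than being maintained by hand at each step. The invocation of Lemma \ref{lem:subplan is optimal} plays the same role in both arguments (optimality of a sub-plan with respect to its own marginals). Your computation of (c) via the decomposition $\tilde\rho_i\mm=\mu+\nu$ is clean and gives $2\delta'$ directly; the $(1-\delta')^{-1}$ factors in (b) and (d) follow immediately from $\eta_i\le(1-\delta')^{-1}\tilde\eta_i$ and $f_i\le1$. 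In short, your gluing-then-restricting argument is a more conceptual packaging of what the paper does iteratively, and it buys readability at no cost in generality.
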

\begin{proof}
We shall proceed inductively with finitely many steps $j \in \{0, 1, \ldots, M\}$ to construct $(\rho_{i}^j)_{i=0}^M \subset L^\infty (\mm)$ and Borel functions $(f_i^j)_{i=0}^{M-1}$, $f_{i}^j \colon C([0,1],\X) \to[0,1]$ such that, setting $\eta^{j}_i :=f_{i}^j \tilde{\eta}_i$, it holds
\begin{align}
    &(\e_{0})_\sharp \eta^{j}_i =\rho_{i}^j \mm,\qquad  (\e_{1})_\sharp \eta^{j}_{i} =\rho_{i+1}^j\mm;\label{iterativecorrectmarginal} \\
    &\|\rho_{i}^{j}-\rho_{i}^{j-1}\|_{L^1(\mm)}\leq \tilde{\eta}_{j-1}(\Gamma^{c}_{j-1})\qquad \textrm{for $j\geq 1$};\label{boundL1extra} \\
    &\|\rho_{i}^{j}\|_{L^1(\mm)}\geq 1-\sum_{k=0}^{j-1} \tilde{\eta_{k}}(\Gamma^{c}_{k})  \qquad \textrm{for $j\geq 1$};\label{lowerL1iterative} \\
    &f_{i}^{j}\restr{\Gamma^{c}_{i}}\equiv 0 \,\,\textrm{for}\,\, i< j\,\,\,\textrm{and}\,\,f_{i}^{j}\leq f_{i}^{j-1}.\label{poli0onbadcurves}
\end{align}
Note that \eqref{iterativecorrectmarginal} and $f_{i}^{j}\leq f_{i}^{j-1}$ for some $i,j$ yields  $\rho_{i}^j\leq \rho_{i}^{j-1}$ and $\rho_{i+1}^j\leq \rho_{i+1}^{j-1}$. We first show the construction and, at the end, exhibit $(\rho_i),(\eta_i)$ satisfying the conclusions (a),(b),(c),(d).

We set $\rho_{i}^0 :=\tilde{\rho}_i$, $\eta^{0}_i =\tilde{\eta}_i$ $f_{i}^{0}\equiv 1$.
Starting from the $(j-1)$-th step, we will now show how to produce $(\rho_{i}^{j})_{i=0}^M$, $(\eta_{i}^{j})_{i=0}^{M-1}$ and $(f_{i}^{j})_{i=0}^{M-1}$. We first set 
\[
C([0,1],\X)\ni\gamma\mapsto f_{j-1}^{j}(\gamma)\coloneqq \nchi_{\Gamma_{j-1}}(\gamma) f_{j-1}^{j-1}(\gamma) ,
\]
which is Borel and automatically defines 
\[
\eta^{j}_{j-1}\coloneqq \nchi_{\Gamma_{j-1}}\eta^{j-1}_{j-1},\qquad \rho^{j}_{j} \mm \coloneqq (\e_{1})_\sharp \eta^{j}_{j-1}.
\]
Since $\eta^{j}_{j-1}\leq \tilde \eta_{j-1}$, it is easy to check that \eqref{iterativecorrectmarginal},\eqref{boundL1extra},\eqref{lowerL1iterative} and \eqref{poli0onbadcurves} hold for $i=j-1$. In particular, $f_{j-1}^j$ takes values in $[0,1]$.

We next consider $i>j-1$ (the case $i<j-1$ will be the same, and it is omitted) and we build next $\rho_{i+1}^{j} ,\eta_{i}^{j}$ and $f_{i}^{j}$ from $\rho_i^{j} ,\eta_i^{j-1}$ and $\rho_i^{j-1}$ (recall, the $(j-1)$-step is assumed, hence these are available for all $i$).
We define
\[
C([0,1],\X)\ni\gamma\mapsto  f^{j}_{i}(\gamma)\coloneqq \left(\frac{\rho^{j}_i}{\rho^{j-1}_i}\chi_{\{\rho^{j-1}_i>0\}}\right)\circ \e_0(\gamma),
\]
which is Borel and, as before, it automatically defines
\[
\eta^{j}_{i}\coloneqq f^{j}_{i}\eta_{i}^{j-1},\qquad \rho_{i+1}^{j}\mm \coloneqq (\e_1)_\sharp\eta^{j}_{i}.
\]
The properties \eqref{iterativecorrectmarginal},\eqref{poli0onbadcurves} hold by construction. In particular, $f_{i}^j$ takes values in $[0,1]$. Instead, \eqref{boundL1extra} follows noticing that for 
$i>j-1$ 
\begin{equation}
\begin{split}
    \|\rho_{i}^{j-1}-\rho_{i}^{j}\|_{L^1(\mm)}&=|\e_{1\#}(\eta_{i-1}^{j-1}-\eta_{i-1}^{j})|(C([0,1],\X))= | \eta_{i-1}^{j-1}-\eta_{i-1}^{j}|(C([0,1],\X))\\
    &=|\e_{0\#}(\eta_{i-1}^{j-1}-\eta_{i-1}^{j})|(C([0,1],\X))=\|\rho_{i-1}^{j-1}-\rho_{i-1}^{j}\|_{L^1(\mm)},
    \end{split}
\end{equation}
which implies
\[
\|\rho_{i}^{j-1}-\rho_{i}^{j}\|_{L^1(\mm)}= | \eta_{j-1}^{j-1}-\eta_{j-1}^{j}|(C([0,1],\X))\leq \tilde{\eta}_{j-1}(\Gamma^{c}_{j-1}).
\]
Finally, property \eqref{lowerL1iterative} easily follows from \eqref{boundL1extra} since 
\[
\|\rho_{i}^{j}\|_{L^1(\mm)}=\left\|\rho_{i}^{0}-\sum_{k=0}^{j-1}(\rho_{i}^{k}-\rho_{i}^{k+1})\right\|_{L^1(\mm)}\geq \|\rho_{i}^{0}\|_1-\sum_{k=0}^{j-1}\|\rho_{i}^{k}-\rho_{i}^{k+1}\|_{L^1(\mm)} \geq 1-\sum_{k=0}^{j-1} \tilde{\eta}_{k}(\Gamma^{c}_{k}) .
\]
As said, the case $i<j-1$ can be done similarly. We are thus ready to define the $(\rho_i )_{i}$, $(\eta)_{i}$ and prove all the listed properties to conclude the proof. For $i=0,\ldots M$, we set
\[
\rho_{i}=\frac{\rho_{i}^M}{\|\rho_{i}^M\|_{L^1(\mm)}},\qquad\eta_{i}=\frac{\eta_{i}^M}{\eta_{i}^M(C([0,1],\X)) }.
\]
Observe that, by construction, $\supp(\rho_i) \subseteq \supp (\tilde{\rho}_i)$ and $\supp (\eta_i) \subseteq \supp(\tilde{\eta}_i)$. 
By \eqref{propoptsubplan} and Lemma \ref{lem:subplan is optimal}, we have $\eta_i \in \OptGeo_q (\rho_i \mm,\rho_{i+1}\mm)$. By construction, the conclusion (a) is obvious. Since  $\rho_{i}^M \leq \tilde{\rho_{i}}$ and using \eqref{lowerL1iterative}, we also deduce (b). The conclusion (c) is instead implied by 
\[
\|\tilde{\rho_i}-\rho_i\|_{L^1(\mm)}\leq\sum_{j=1}^M\|\rho^{j}_i-\rho_{i}^{j-1}\|_{L^1(\mm)} + \|\rho_{i}^M-\rho_{i}\|_{L^1(\mm)}\leq \sum_{j=0}^{M-1}\tilde{\eta}_j (\Gamma^{c}_j) + 1-\|\rho_{i}^M\|_{L^1(\mm)} \leq 2\sum_{j=0}^{M-1}\tilde{\eta}_j (\Gamma^{c}_j),
\]
where we exploited properties \eqref{boundL1extra},\eqref{lowerL1iterative}. Finally, the conclusion (d) follows as by construction we have
\[
    \rmKe_q(\eta_i) = \frac{\int\sfd^q(\gamma_0,\gamma_1)\,\d\eta_i^M}{\eta_{i}^M(C([0,1],\X))} \le \frac{\int\sfd^q(\gamma_0,\gamma_1)\,\d\tilde \eta_i}{\|\rho_i^M\|_{L^1(\mm)}} \overset{\eqref{lowerL1iterative}}{\le} \left(\frac{1}{1-\sum_{j=0}^{M-1}\tilde \eta_j(\Gamma_j^c)}\right)\rmKe_q(\tilde{\eta}_i),
\]
having used that $\eta_i^M \le \tilde \eta_i$ by recursion and since $f_i^j \le 1$ for all $j$.
\end{proof}
\begin{proposition}[Polygonal $W_q$-geodesics on varying spaces]\label{prop:q_polygonal}
Let $q\in(1,\infty)$ be fixed. Let $\Xdmxn$ be a sequence of pointed metric measure spaces satisfying strong ${\sf CD}_q(K,\infty)$ for some $K\in \R$. Assume that $\X_n \overset{pmG}{\rightarrow} \X_\infty$ for some $\Xdmxinf$. 

Then, for every $q$-test plan $\eta  \in \PP(C([0,1],\X_\infty))$ with bounded support, there is $M_0 \in \N$ and, for all $M\ge M_0$, there is  $n_0(M) \in \N$ and  there are $q$-test plans $\pi_{n}^M   \in \PP(C([0,1],\X_{n}))$ for all $n\ge n_0(M)$ so that the following holds:
\begin{itemize}
\item[(i)] $\pi_{n}^M$ is an $M$-polygonal geodesic (with bounded support), that is
\[
\left({\sf rest}_{\frac{i}{M}}^{\frac{i+1}{M}} \right)_\sharp \pi^M_{n} \in \OptGeo_q\left((\e_{\frac iM})_\sharp \pi_{n}^M, (\e_{\frac{i+1}M})_\sharp \pi_{n}^M\right),\qquad \forall i=0,1,...,M-1 .\]
%It is piecewise 
%
\item[(ii)] $\limsup_{M\uparrow\infty}\limsup_{n \uparrow \infty}\rmKe_q(\pi_{n}^M) \le \rmKe_q(\eta)$; %and both limits exists;
\item[(iii)] $\limsup_{M\uparrow\infty}\limsup_{n \uparrow\infty} \Comp(\pi^M_{n}) \le \Comp(\eta);$ 
\item [(iv)] there are boundedly supported probability densities $\xi_n,\zeta_n \in L^\infty(\mm_n)$ so that
\begin{align*}
    &\xi_n \to \frac{\d (\e_0)_\sharp \eta }{\d \mm_{\infty}}\quad \text{in $L^q$-strong},& & \zeta_n \to \frac{\d (\e_1)_\sharp \eta }{\d \mm_{\infty}} \quad \text{in $L^q$-strong},\\
    &\limsup_{M\uparrow\infty}\limsup_{n\uparrow\infty}\left\| \xi_n - \frac{\d (\e_0)_\sharp \pi^{M}_{n}}{\d \mm_n}\right\|_{L^q(\mm_n)} =0,&
    &\limsup_{M\uparrow\infty}\limsup_{n\uparrow\infty}\left\| \zeta_n - \frac{\d (\e_1)_\sharp \pi^{M}_{n}}{\d \mm_n} \right\|_{L^q(\mm_n)}=0.
\end{align*}
\end{itemize}
 Furthermore, if $(\Z,\sfd)$ is the realization of the convergence, $A\subset \Z$ is open, and ${\rm dist}([\eta],A^c)>0 $, then we can also require that ${\rm dist}([\pi_n^M],A^c)>0$ up to increasing further $M_0,n_0(M)$ depending also on $A$.
\end{proposition}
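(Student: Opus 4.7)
My plan is to discretize $\eta$ through its intermediate marginals $\mu_i \coloneqq (\e_{i/M})_\sharp\eta$ for $i=0,\ldots,M$, transfer each marginal from $\X_\infty$ to $\X_n$ using Lemma~\ref{lem:approx muinf}, interpolate consecutive pairs by $W_q$-optimal dynamical plans whose length is carefully bounded, and concatenate by reparameterization. The strong ${\sf CD}_q(K,\infty)$ hypothesis, through Theorem~\ref{thm:rajalaCDqKN} and Remark~\ref{rem:rajala improved}, gives per-segment compression bounds of the form $e^{K^-\Lip(\cdot)^2/12}$, so the crux is to guarantee that each optimal segment concentrates on short geodesics. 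Fix once and for all an exponent $\alpha \in (1/q,1)$ and set $L = L(M) \coloneqq M^\alpha$: this calibration is the central technical point and works precisely because $q>1$ makes $(1/q,1)$ nonempty. The inequality $\alpha q > 1$ will force the total discarded mass to vanish as $M\to\infty$, while $\alpha < 1$ will force the per-segment Lipschitz bound $L/M \to 0$, hence trivializing the compression factor.

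\noindent\textbf{Construction of $\pi_n^M$.} Let $B \subset \Z$ be a ball containing the supports of all $\mu_i$. Write $\mu_i = \rho_i^\infty \mm_\infty$, noting $\|\rho_i^\infty\|_\infty \le \Comp(\eta)$. Apply Lemma~\ref{lem:approx muinf} to each $\rho_i^\infty$ to obtain probability densities $\tilde\rho_i^n \in L^\infty(\mm_n)$ with $\supp(\tilde\rho_i^n) \subset 2B$, $\limsup_n \|\tilde\rho_i^n\|_\infty \le \|\rho_i^\infty\|_\infty$, and $\tilde\rho_i^n \to \rho_i^\infty$ in $L^q$-strong. Theorem~\ref{thm:rajalaCDqKN} then provides $\tilde\eta_i^n \in \OptGeo_q(\tilde\rho_i^n \mm_n, \tilde\rho_{i+1}^n \mm_n)$. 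Define the bad sets
\[
\Gamma_i^{c,n} \coloneqq \{\gamma \in C([0,1],\X_n) : \sfd(\gamma_0,\gamma_1) > L/M\}.
\]
Markov's inequality gives $\tilde\eta_i^n(\Gamma_i^{c,n}) \le (M/L)^q W_q^q(\tilde\rho_i^n\mm_n, \tilde\rho_{i+1}^n\mm_n)$. Weak convergence with uniformly bounded supports yields $W_q$-continuity, and Jensen applied to $\eta$ gives $\sum_i M^{q-1} W_q^q(\mu_i,\mu_{i+1}) \le \rmKe_q(\eta)$, so summing over $i$:
\[
\limsup_{n\to\infty} \sum_{i=0}^{M-1} \tilde\eta_i^n(\Gamma_i^{c,n}) \le M^{1-\alpha q}\, \rmKe_q(\eta) \xrightarrow[M\to\infty]{} 0.
\]
Thus for $M \ge M_0$ and then $n \ge n_0(M)$ this sum is at most $1/2$, and Proposition~\ref{propoptsubplan} produces densities $\rho_i^n$ and plans $\eta_i^n \in \OptGeo_q(\rho_i^n\mm_n, \rho_{i+1}^n\mm_n)$ concentrated on curves with $\sfd(\gamma_0,\gamma_1) \le L/M$. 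Define $\pi_n^M$ by reparameterizing each $\eta_i^n$ on $[i/M,(i+1)/M]$ and concatenating; its support is bounded since every segment joins points in $2B$.

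\noindent\textbf{Verification of (i)--(iv).} Item (i) is immediate from the construction. For (ii), optimality of each $\eta_i^n$ and reparameterization yield $\rmKe_q(\pi_n^M) = \sum_i M^{q-1}\, W_q^q(\rho_i^n\mm_n, \rho_{i+1}^n\mm_n)$; Proposition~\ref{propoptsubplan}(d) combined with $W_q$-continuity as $n\to\infty$ and Jensen gives
\[
\limsup_{n\to\infty} \rmKe_q(\pi_n^M) \le \frac{\rmKe_q(\eta)}{1 - M^{1-\alpha q}\, \rmKe_q(\eta)} \xrightarrow[M\to\infty]{} \rmKe_q(\eta).
\]
For (iii), since $\eta_i^n$ is concentrated on constant-speed geodesics of length at most $L/M$, one has $\Lip(\eta_i^n)\le L/M$, and Remark~\ref{rem:rajala improved} applied to each segment produces
\[
\Comp(\pi_n^M) \le e^{K^-(L/M)^2/12}\, \max_i \|\rho_i^n\|_\infty;
\]
Proposition~\ref{propoptsubplan}(b) together with the $L^\infty$-bound from Lemma~\ref{lem:approx muinf} gives $\limsup_M \limsup_n \max_i\|\rho_i^n\|_\infty \le \Comp(\eta)$, while the exponential tends to $1$ as $M\to\infty$. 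For (iv), take $\xi_n \coloneqq \tilde\rho_0^n$ and $\zeta_n \coloneqq \tilde\rho_M^n$: these are $M$-independent approximations of $(\e_0)_\sharp\eta$ and $(\e_1)_\sharp\eta$ which converge $L^q$-strong by Lemma~\ref{lem:approx muinf}. Proposition~\ref{propoptsubplan}(c) bounds $\|\xi_n - \tfrac{\d(\e_0)_\sharp \pi_n^M}{\d\mm_n}\|_{L^1(\mm_n)}$ by twice the total bad mass, which vanishes in the double limit; interpolating with the uniform $L^\infty$-bound yields the required $L^q$-smallness, and symmetrically for $\zeta_n$. The main obstacle throughout is precisely the calibration $L = M^\alpha$ above, since one must simultaneously retain enough mass to preserve the marginal structure yet sufficiently short geodesics to trivialize the per-step compression factor.
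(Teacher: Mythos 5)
Your proof is correct and follows essentially the same strategy as the paper's: approximate the intermediate marginals via Lemma~\ref{lem:approx muinf}, interpolate by $W_q$-optimal plans, discard long geodesics via a Markov/Chebyshev estimate combined with Jensen, apply Proposition~\ref{propoptsubplan} to restore the polygonal structure, bound compression through Remark~\ref{rem:rajala improved}, and glue. The only deviation is cosmetic: you calibrate the bad-set threshold explicitly as $L/M = M^{\alpha-1}$ with $\alpha \in (1/q,1)$, whereas the paper uses the $W_q$-dependent threshold $\sfd^q(\gamma_0,\gamma_1)\le W_q(\tilde\rho_{i,n}\mm_n,\tilde\rho_{i+1,n}\mm_n)^{(q-1)/2}$; both choices achieve the same balance between vanishing discarded mass and vanishing per-segment Lipschitz constants.
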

\begin{proof}
We subdivide the proof into different steps.

\noindent\textbf{Preliminary observations}. To prove the first part of the statement, we can replace $K$ with $K^-$ in what follows. Since $\eta$ is of bounded support we can assume that $\Gamma\coloneqq \{\gamma_t \colon t \in [0,1],\gamma \in \supp(\eta)\}$ is contained in an open ball $B\subset \Z$ with $D\coloneqq$ diam$(B)<\infty$ and $\dist(\Gamma,\Z\setminus B)>0$. Then we consider fixed $\nchi \colon \Z \to [0,1]$ a $1$-Lipschitz cut-off with $\nchi\equiv 1 $ on $B$ and with support in $2B$. It will be clear during the proof that all the probability measures involved are supported in $2B$. In particular, in what follows, weak and Wasserstein convergence will always coincide. 

\noindent\textbf{Construction of the polygonal I}. Let us consider $M\in\N,M\ge M_0$ arbitrary for $M_0$ to be chosen in the sequel. For every $i=0,1,...,M$, we can invoke Lemma \ref{lem:approx muinf} and find an integer $n_0\coloneqq n_0(M)$ depending on $M$ and, for all $n\ge n_0$, probability measures $\tilde \rho_{i,n}\mm_n \in \PP(\X_n)$ satisfying
\begin{subequations}\begin{align}
\label{eq:Wqconvergence_i}
&\tilde \rho_{i,n} \to \rho_i\coloneqq \frac{\d (\e_{t_{i }})_\sharp \eta}{\d\mm_\infty} \qquad \text{in } L^q\text{-strong as }n\uparrow\infty, \\
\label{eq:limsdens_i}
&\limsup_{n\uparrow\infty} \|\tilde \rho_{i,n}\|_{L^\infty(\mm_n)}\le \| \rho_i\|_{L^\infty(\mm_\infty)}\le {\rm Comp}(\eta),
\end{align}
\end{subequations}
 and with supports contained in $2B\subset \Z$. Thus, $W_q$-convergence also occurs.  Here, $t_{i}\coloneqq\frac{i}{M}$ is the uniform grid in $[0,1]$. Now, for all $i=0,...,M-1$, consider $ \tilde \eta_{i,n} \in \OptGeo_q(\tilde \rho_{i,n}\mm_n,\tilde\rho_{i+1,n}\mm_n)$ and define the Borel set
\begin{equation}\label{eq: gamma_i,n}
\Gamma_{i,n}\coloneqq \Big\{ \gamma \in C([0,1],\X_n) \colon  \sfd^q(\gamma_0,\gamma_1) \leq W_q(\tilde \rho_{i,n}\mm_n,\tilde\rho_{i+1,n}\mm_n)^{\frac{q-1}{2}} \Big\}.
\end{equation}
An application of Chebyshev and H\"older inequalities gives
\begin{equation}\label{Chebyshev}
\tilde  \eta_{i,n}(\Gamma_{i,n}^c) \le \frac{\int \sfd^q(\gamma_0,\gamma_1)\,\d  \eta_{i,n}}{W_q(\tilde \rho_{i,n}\mm_n,\tilde\rho_{i+1,n}\mm_n)^{\frac{q-1}{2}}} \le  W_q(\tilde \rho_{i,n}\mm_n,\tilde\rho_{i+1,n}\mm_n)^{\frac{q+1}{2}}.
\end{equation}

\noindent\textbf{Construction of the polygonal II}. In this step, we will build a sequence of $M$-polygonal geodesic between carefully selected sub-marginals $\rho_{i,n} \le \tilde \rho_{i,n}$ making sure that, interpolating geodesics from $\rho_{i,n}$ to $\rho_{i+1,n}$ lie within the set of curves $\Gamma_{i,n}$. 

Let us define $\Sigma_n^M \coloneqq \sum_{i=0}^{M-1}\tilde \eta_{i,n}(\Gamma_{i,n}^c)$. Since, by Jensen's inequality, it holds
\begin{equation}
  \sum_{i=0}^{M-1} W^q_q((\e_{t_{i}})_\sharp \eta, (\e_{ t_{i+1}})_\sharp\eta) \le \sum_{i=0}^{M-1} \int\Big(\int_{ t_{i}}^{ t_{i+1}} |\dot \gamma_t|\,\d t\Big)^q\,\d \eta \le   M^{1-q}\,\rmKe_q(\eta),
\label{eq:kinetic grid points}
\end{equation}
then, using (\ref{Chebyshev}), the concavity of $t \mapsto t^\frac{q+1}{2q}$ and (\ref{eq:Wqconvergence_i}), we obtain
\begin{equation}\label{eq:discarded mass}
\begin{split}
\limsup_{n\uparrow \infty}\Sigma_n^M &\le \varlimsup_{n \up \infty}\sum_{i=0}^{M-1} W_q(\tilde \rho_{i,n}\mm_n,\tilde\rho_{i+1,n}\mm_n)^{\frac{q+1}{2}}=\sum_{i=0}^{M-1} \left(W^q_q((\e_{t_{i}})_\sharp \eta, (\e_{ t_{i+1}})_\sharp\eta) \right)^\frac{q+1}{2q} \\
&\le M\left(\frac1M M^{1-q}\,\rmKe_q(\eta)\right)^\frac{q+1}{2q}=\frac{\rmKe_q(\eta)^\frac{q+1}{2q}}{M^{\frac{q+1}{2}-1}}.
\end{split}
\end{equation}
Note that $\frac{q+1}{2}>1$. We can now choose $M_0 \in \N$ so that  $\limsup_{n\uparrow \infty} \Sigma_n^M\le \frac 14$ for all $M\ge M_0$, and we can further increase $n_0(M)\in \N$ so that $\Sigma_n^M\le \frac 12$ for all $n\ge n_0$. Thanks to these choices, for all $M\ge M_0$ and $n\ge n_0(M)$, we are in position to invoke Proposition \ref{propoptsubplan} to obtain the submarginals $\rho_{i,n}\le \tilde \rho_{i,n}$ and the sub-plans $\eta_{i,n}\le \tilde \eta_{i,n}$ satisfying $\eta_{i,n} \in \OptGeo_q(\rho_{i,n}\mm_n,\rho_{i+1,n}\mm_n)$ and all the listed conclusions (a),(b),(c),(d) which we are going to use next. Moreover, exploiting the strong curvature dimension condition, we deduce coupling Theorem \ref{thm:rajalaCDqKN} and Remark \ref{rem:rajala improved} the following crucial property
\begin{equation}\label{eq:comp q polygonal}
    \Comp(\eta_{i,n}) \overset{\eqref{eq:rajala improved}}{\le} e^{\frac{K^-}{12} \Lip(\eta_{i,n})^2} \bigvee_{j=0}^{M}\|\rho_{j,n}\|_{L^\infty(\mm_n)} \le  e^{\frac{K^-}{12} W_q(\tilde{\rho}_{i,n}\mm_n,\tilde{\rho}_{i+1,n}\mm_n )^{\frac{q-1}{q}}}\bigvee_{j=0}^{M}\|\rho_{j,n}\|_{L^\infty(\mm_n)},
\end{equation}
for all $i=0,\dots,M-1$, having used that $\sfd(\gamma_0,\gamma_1) \le W_q(\tilde{\rho}_{i,n}\mm_n,\tilde{\rho}_{i+1,n}\mm_n )^{\frac{q-1}{2q}}$ for $\eta_{i,n}$-a.e.\ $\gamma \in \Gamma_{i,n}$ and that $\eta_{i,n}(\Gamma_{i,n}^c)=0$, by (a) in Proposition \ref{propoptsubplan}. Lastly, with a gluing argument (see, e.g., \cite[Lemma 2.1.1]{G11}), we can find $\pi_n^M \in \PP(C([0,1],\X_n))$ with the property
\begin{equation}\label{eq:piecewise geodesics}
   \eta_{i,n}=\left({\sf rest}_{t_{i}}^{t_{i+1}} \right)_\sharp \pi^M_{n},\qquad \forall i=0,1,...,M-1.
\end{equation}

\noindent\textbf{Proof of properties} (i),(ii),(iii).
It is clear from the construction that $\pi^M_{n}$ are all concentrated on piece-wise geodesics living in the bounded region $2B\subset \Z$. In particular, taking also into account \eqref{eq:comp q polygonal}, it is already evident that  $\pi^M_{n}$ is a $q$-test plan. This fact, however, will also follow by (ii)-(iii). \\
Property (i) is obvious from \eqref{eq:piecewise geodesics}. We now prove  (iii). For  $M\ge M_0$ we can estimate
\[
\limsup_{n\uparrow\infty} \Comp(\pi^M_{n}) \overset{\eqref{eq:comp q polygonal}}{\le}  e^{\frac{K^-}{12} \vee_i W_q((\e_{t_{i}})_\sharp \eta, (\e_{ t_{i+1}})_\sharp\eta) )^{\frac{q-1}{q}}}\limsup_{n\uparrow\infty}\bigvee_{i=0}^{M}\|\rho_{i,n}\|_{L^\infty(\mm_n)},
\]
having used that $\tilde \rho_{i,n}\mm_n$ converge weakly to $(\e_{t_{i}})_\sharp \eta$. As $\lim_{M \up \infty}\vee_i W_q((\e_{t_{i}})_\sharp \eta, (\e_{ t_{i+1}})_\sharp\eta) )=0$ thanks to (\ref{eq:kinetic grid points}), property (iii) follows taking also into account (b) in Proposition \ref{propoptsubplan}, then \eqref{eq:discarded mass} and lastly \eqref{eq:limsdens_i}. Let us prove now (ii). We start estimating the kinetic energy as follows:
\begin{align*}
\rmKe_q(\pi^M_{n}) &= \sum_{i=0}^{M-1}  \iint_{\frac iM}^{\frac{i+1}M}|\dot\gamma_t|^q\,\d t \d \pi^M_{n}\\
&=\sum_{i=0}^{M-1}  \iint_0^1M^{q-1}|\dot\gamma_t|^q\,\d t \d \big({\sf rest}_{\frac iM}^{\frac{i+1}M}\big)_\sharp \pi^M_{n} =\sum_{i=0}^{M-1} M^{q-1}\rmKe_q(\eta_{i,n}) \\
&{\le} \sum_{i=0}^{M-1} \frac{M^{q-1}}{1-\Sigma_n^M}\rmKe_q(\tilde \eta_{i,n}) =\frac{ M^{q-1}}{1-\Sigma_n^M}\sum_{i=0}^{M-1} W^q_q(\tilde \rho_{i,n}\mm_n,\tilde \rho_{i+1,n}\mm_n),
\end{align*}
having used, in the third line, property (d) in Proposition \ref{propoptsubplan}. Therefore, sending first $n$ and then $M$ to infinity, we reach
\[
\begin{aligned}
\limsup_{M\uparrow\infty}\limsup_{n\uparrow\infty}\rmKe_q(\pi^M_{n})& \le \left(\limsup_{M\uparrow\infty } \limsup_{n\uparrow\infty} \frac{1}{1-\Sigma_n^M}\right)\limsup_{M\uparrow\infty}M^{q-1}\sum_{i=0}^{M-1}  W_q^q((\e_{t_{i}})_\sharp \eta, (\e_{t_{i+1}})_\sharp\eta) \\
&\overset{\eqref{eq:discarded mass}}{\le}  \limsup_{M\uparrow\infty} M^{q-1}\sum_{i=0}^{M-1}W^q_q((\e_{t_{i}})_\sharp \eta, (\e_{t_{i+1}})_\sharp\eta) \overset{\eqref{eq:kinetic grid points}}{\le} \rmKe_q(\eta).
\end{aligned}
\]

\noindent\textbf{Proof of property} (iv). For each $j\in \{0,M\}$, we estimate by conclusion (c) in Proposition \ref{propoptsubplan}
\begin{align*}
    \| \rho_{j,n}-\tilde \rho_{j,n}\|^q_{L^q(\mm_n)} &\le \left(\|\rho_{j,n}\|_{L^\infty(\mm_n)} + \|\tilde \rho_{j,n}\|_{L^\infty(\mm_n)} \right)^{q-1}\| \rho_{j,n}-\tilde \rho_{j,n}\|_{L^1(\mm_n)} \\
    &\le \left(\|\rho_{j,n}\|_{L^\infty(\mm_n)} + \|\tilde \rho_{j,n}\|_{L^\infty(\mm_n)} \right)^{q-1}2\Sigma_n^M.
\end{align*}
Thanks to conclusion (b) in Proposition \ref{propoptsubplan} and recalling \eqref{eq:limsdens_i}, we get that 
\[
\limsup_{n\uparrow\infty} \| \rho_{j,n}-\tilde \rho_{j,n}\|^q_{L^q(\mm_n)} \le c(\eta)^{q-1} \limsup_{n\uparrow\infty}\Sigma_n^M,
\]
for some constant $c(\eta)>0$ depending only on $\Comp(\eta)$. Notice that the right-hand side converges to zero a $M\uparrow \infty$ by \eqref{eq:discarded mass}. The proof is then concluded with the choice $\xi_n = \tilde{\rho}_{0,n},\zeta_n= \tilde{\rho}_{M,n}$ (the latter does not depend on $M$). 

\noindent\textbf{Last conclusion}.
We conclude the proof by discussing how to modify the construction for $A\subset \Z$ open. By applying Lemma \ref{lem:approx muinf} with $\eps<\frac{1}{2}{\rm dist}([\eta],A^c)$ and $B=[\eta]$ when we first introduce the $\tilde{\rho}_{i,n}$, $i=0,\ldots,M$ and $n\in \N$, we may assume that 
$${\rm dist}({\rm supp} \, \rho_{i,n},A^c)\geq{\rm dist}({\rm supp} \, \tilde{\rho}_{i,n},A^c)>\frac{1}{2}{\rm dist}([\eta],A^c).$$
Recalling (\ref{eq: gamma_i,n}) and (\ref{eq:kinetic grid points}), we can choose $M_0$ with the additional requirement that, for any $M \ge M_0$,
$$\sup_{n \geq n_0(M)}\max_{i\in\{0,\ldots M-1\}} \Lip(\eta_{i,n})\leq \sup_{n \geq n_0(M)}\max_{i\in\{0,\ldots M-1\}} W_q(\tilde \rho_{i,n}\mm_n,\tilde\rho_{i+1,n}\mm_n)^{\frac{q-1}{2q}} < \frac{1}{4}{\rm dist}([\eta],A^c).$$
Thus we finally infer that ${\rm dist}([\pi_n ^M],A^c)>\frac{1}{4}{\rm dist}([\eta],A^c)>0$ for all $M \ge M_0$ and all $n\geq n_0(M)$.

\end{proof} 
\subsection{The case $q=\infty$}
We shall prove an $\infty$-version of the polygonal interpolation on varying spaces in Proposition \ref{prop:infty_polgeo} below.

We start with a preliminary step that is needed to find $W_\infty$-geodesics with good compression estimates. The main argument has already been carried out in \cite[Theorem 3.2]{NobiliPasqualettoSchultz22}. For the sake of completeness, and to consider more general infinite-dimensional settings, we included the proof.
\begin{proposition}\label{prop:good infty plan}
Let us assume that, for a given sequence  $q_k\uparrow \infty$,  $\Xdm$ is a metric measure space satisfying strong ${\sf CD}_{q_k}(K,\infty)$ for some $K \in \R,N\in[1,\infty)$ and for all $k\in\N$. Then, for every boundedly supported probability densities $\rho_0,\rho_1\in L^\infty(\mm)$, there is $\pi \in \OptGeo_\infty(\rho_0\mm,\rho_1\mm)$ so that
\[
\Comp(\pi) \le e^{\frac{K^-}{12}W^2_\infty(\rho_0\mm,\rho_1\mm)}\|\rho\|_{L^\infty(\mm)}\vee \|\rho_1\|_{L^\infty(\mm)}.
\]
In particular, $\pi$ is an $\infty$-test plan. If $K\ge 0$, the same  also holds if $\Xdm$ is assumed $\CD_{q_k}(K,\infty)$.
\end{proposition}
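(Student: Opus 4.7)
The plan is to leverage the strong $\CD_{q_k}$ condition to build, for each $k$, a unique $q_k$-optimal geodesic plan $\pi_k$ with a good density bound via Theorem \ref{thm:rajalaCDqKN} and Remark \ref{rem:rajala improved}, then extract a weak limit $\pi$ as $k \to \infty$ which will turn out to be $W_\infty$-optimal by virtue of \eqref{eq:Mosco Keq Lip} and \eqref{eq:dynOPq}, and finally refine the bound on the density of the marginals of $\pi$ by restricting to sub-geodesics whose length is close to $W_\infty(\rho_0\mm,\rho_1\mm)$.

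First, set $M \coloneqq \|\rho_0\|_{L^\infty(\mm)} \vee \|\rho_1\|_{L^\infty(\mm)}$, $W \coloneqq W_\infty(\rho_0\mm, \rho_1\mm)$, and denote by $D_0$ the diameter of $\supp(\rho_0)\cup \supp(\rho_1)$, finite by assumption. By strong $\CD_{q_k}(K,\infty)$ and \eqref{eq:optgeo unique}, there exists a unique $\pi_k \in \OptGeo_{q_k}(\rho_0\mm,\rho_1\mm)$, and Theorem \ref{thm:rajalaCDqKN} applied with the crude a-priori bound $\Lip(\pi_k) \le D_0$ produces absolutely continuous marginals $(\e_t)_\sharp \pi_k = \rho_t^k \mm$ with $\|\rho_t^k\|_{L^\infty(\mm)} \le e^{\frac{K^-}{12}D_0^2}M$. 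The uniform compression and Lipschitz bounds, together with the bounded supports of $\pi_k$, allow me to invoke Proposition \ref{prop:infty_tightness} to extract a subsequence with $\pi_k \weakto \pi$ for some $\infty$-test plan $\pi$. Passing to the limit, $\pi$ has marginals $\rho_0\mm,\rho_1\mm$, is concentrated on $\Geo(\X)$, and by \eqref{eq:Mosco Keq Lip} together with \eqref{eq:dynOPq} and $W_{q_k} \uparrow W$, it satisfies $\Lip(\pi) \le W$; the reverse inequality $\Lip(\pi) \ge W$ is immediate from \eqref{eq:def_Winfty}, so $\pi \in \OptGeo_\infty(\rho_0\mm, \rho_1\mm)$.

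The main obstacle will be replacing $D_0^2$ by $W^2$ in the density bound at the limit, since nothing prevents $\Lip(\pi_k)$ from being strictly larger than $W$. For $\epsilon > 0$ the plan is to introduce the ``good set'' $\Gamma_k^\epsilon \coloneqq \{\gamma \in \Geo(\X): \sfd(\gamma_0,\gamma_1) \le W + \epsilon\}$ and exploit the $W_{q_k}$-optimality of $\pi_k$ together with Chebyshev's inequality to show
\[
\pi_k(C([0,1],\X)\setminus \Gamma_k^\epsilon) \le \left(\frac{W_{q_k}}{W+\epsilon}\right)^{q_k} \le \left(\frac{W}{W+\epsilon}\right)^{q_k} \to 0.
\]
Setting $\pi_k^\epsilon \coloneqq \pi_k\res{\Gamma_k^\epsilon} / \pi_k(\Gamma_k^\epsilon)$, Lemma \ref{lem:subplan is optimal} ensures $\pi_k^\epsilon$ is a $q_k$-optimal geodesic plan between its own absolutely continuous marginals (which have densities bounded by $M/\pi_k(\Gamma_k^\epsilon)$), so by strong CD and \eqref{eq:optgeo unique} it is the unique such plan, and Remark \ref{rem:rajala improved} applies with the improved bound $\Lip(\pi_k^\epsilon) \le W+\epsilon$ to yield
\[
\|\rho_t^{k,\epsilon}\|_{L^\infty(\mm)} \le e^{\frac{K^-}{12}(W+\epsilon)^2}\frac{M}{\pi_k(\Gamma_k^\epsilon)}.
\]
Since $\pi_k(\Gamma_k^\epsilon) \to 1$, a routine check shows $\pi_k^\epsilon \weakto \pi$ along the same subsequence, so weak-$*$ lower semicontinuity of the $L^\infty$-norm of densities gives $\|\rho_t\|_{L^\infty(\mm)} \le e^{\frac{K^-}{12}(W+\epsilon)^2}M$ for every $t \in [0,1]$; letting $\epsilon\downarrow 0$ concludes. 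In the case $K \ge 0$, the estimate in Theorem \ref{thm:rajalaCDqKN} becomes independent of $D$ and reduces to $\|\rho_t^k\|_{L^\infty(\mm)} \le M$, so the restriction-and-uniqueness argument is unnecessary and plain $\CD_{q_k}(K,\infty)$ suffices.
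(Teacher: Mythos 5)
Your proposal is correct and follows essentially the same strategy as the paper's proof: restrict $q_k$-optimal plans via Chebyshev to control the Lipschitz constant, invoke the improved density estimate of Remark~\ref{rem:rajala improved} (which is where the strong $\CD$ uniqueness is used), pass to a weak limit via Proposition~\ref{prop:infty_tightness}, and obtain $W_\infty$-optimality from \eqref{eq:Mosco Keq Lip}. The only cosmetic difference is that you first extract the limit $\pi$ from the unrestricted plans using a crude diameter bound and then show the fixed-$\eps$ restrictions converge to the same $\pi$ before sending $\eps\downarrow 0$, whereas the paper builds the shrinking threshold $\eps_k\downarrow 0$ directly into a single sequence of restrictions; both routes are equivalent.
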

\begin{proof}
For brevity, we set $\mu_i=\rho_i\mm$, for $i=0,1$. To prove the first part of the statement, we can replace $K$ with $K^-$ in what follows. In the end, we discuss the case $K\ge 0$. For each $k \in \N$, consider $\eta_k \in \OptGeo_{q_k}(\mu_0,\mu_1)$, define $\eps_k \coloneqq W_\infty(\mu_0,\mu_1)(q_k^{1/q_k}-1)$ and set
\[
\Gamma_k\coloneqq \Big\{ \gamma \in C([0,1],\X) \colon \int_0^1 |\dot \gamma_t|^{q_k}\,\d t \le (W_\infty(\mu_0,\mu_1)+\eps_k)^{q_k}\Big\},\qquad  \pi_k \coloneqq \frac{\eta_k\restr{\Gamma_k}}{\eta_k(\Gamma_k)},
\]
whenever $\eta_k(\Gamma_k)>0$. By Chebyshev's, we have 
\[
 \pi_k(\Gamma^c_k)\le \frac{W^{q_k}_{q_k}(\mu_0,\mu_1)}{q_kW^{q_k}_{\infty}(\mu_0,\mu_1)} \le \frac1{q_k}\to 0,\qquad \text{ as }k\uparrow \infty,
\]
so that,  eventually, $ \pi_k$ is well-defined for all $k$ large enough. Being a restriction, $ \pi_k$ is optimal with respect its own endpoints, i.e.\ $ \pi_k \in \OptGeo_{q_k}((\e_0)_\sharp \pi_k,(\e_1)_\sharp  \pi_k)$ (cf.\ Lemma \ref{lem:subplan is optimal}). Moreover, by Remark \ref{rem:rajala improved}, it is the unique dynamical plan satisfying
\begin{equation}
\Comp(\pi_k)  \le \frac{e^{\frac{K^-}{12}(W_\infty\left(\mu_0,\mu_1)+\eps_k\right)^2 }}{\eta(\Gamma_k)} \|\rho_0\|_{L^\infty(\mm)}\vee \|\rho_1\|_{L^\infty(\mm)}
\label{eq:stima curvewise}
\end{equation}
for all $t\in[0,1]$, having used that $\Lip( \pi_k)\le (W_\infty(\mu_0,\mu_1)+\eps_k)$ and that the endpoints of $\pi_k$ satisfies the uniform estimates
\[
 (\e_i)_\sharp \pi_k  \le \frac{(\e_i)_\sharp \eta_k}{\eta(\Gamma_k)} 
  \le \frac 1{\eta(\Gamma_k)}\|\rho_0\|_{L^\infty(\mm)}\vee \|\rho_1\|_{L^\infty(\mm)} \mm,\qquad \forall i=0,1.
\]
Next, notice that $\sup_k \Comp(\pi_k)<\infty$, $\sup_k\Lip(\pi_k)<\infty$ (as they are concentrated on equi-Lipschitz geodesics), and the zero-marginal is fixed with bounded support. Hence, $(\pi_k)$ is a sequence of $\infty$-test plans satisfying the assumptions of Proposition \ref{prop:infty_tightness} and, up to a not relabeled subsequence, $\pi_k \weakto  \pi$ for some $\infty$-test plan  $ \pi$ as $k\uparrow\infty$.

We claim that $\pi$ does the job. First, notice that $(\e_i)_\sharp \pi = \mu_i$, for $i=0,1$. This can be seen as follows: notice that $(\e_i)_\sharp \pi_k = \eta_k(\Gamma_k)^{-1}(\e_i)_\sharp \eta_k\restr{\Gamma_k} \weakto \mu_i$ as $k\uparrow\infty$ in duality with $C_b(\X)$, for $i=0,1$. Hence, the claim follows from weak continuity of pushforward via the evaluation map. Also, by \eqref{eq:Mosco Keq Lip}, we have
\begin{align*}
\Lip(\pi) &\le \liminf_{k\to\infty}\rmKe_{q_k}^{1/q_k}(\pi_k) = \liminf_{k\to\infty}  W_{q_k}^{1/q_k}((\e_0)_\sharp \pi_k,(\e_1)_\sharp  \pi_k) = \liminf_{k\to\infty}  \rmKe_{q_k}^{1/q_k}( \pi_k) \\
&\le \limsup_{k\to\infty}\frac{1}{\eta(\Gamma_k)}\rmKe_{q_k}^{1/q_k}(\eta_k)  = \lim_{k\to\infty}W_{q_k}(\mu_0,\mu_1) = W_{\infty}(\mu_0,\mu_1).
\end{align*}
Thus \eqref{eq:dynOPinfty} holds and $\pi \in \OptGeo_\infty(\mu_0,\mu_1)$. Since compression bounds are stable under weak convergence, it holds
\[
\Comp(\pi)\le \liminf_{k\to\infty}\Comp(\pi_k) \le\limsup_{k\uparrow\infty} \frac{e^{\frac{K^-}{12}(W_\infty(\mu_0,\mu_1)+\eps_k)^2}}{\eta(\Gamma_k)} \|\rho_0\|_{L^\infty(\mm)}\vee \|\rho_1\|_{L^\infty(\mm)}.
\]
Finally, if $K\ge 0$ and $\Xdm$ is a $\CD_{q_k}(K,\infty)$ space, we see that the application of Theorem \ref{thm:rajalaCDqKN} still guarantees the validity of \eqref{eq:stima curvewise} when $K =0$, even though $\pi_k$ might be not unique. Hence, all the rest of the argument still applies.
\end{proof}
The following lemma, playing the same roles as Lemma \ref{lem:approx muinf} and Lemma \ref{lem:subplan is optimal}, will be needed to build polygonal geodesic interpolations in the $W_\infty$-topology.

\begin{lemma}\label{lemma_approxWinf}
Let $\Xdmxn$ be pointed metric measure spaces satisfying $\X_n \overset{pmG}{\rightarrow} \X_\infty$ for some $\Xdmxinf$, and fix a realization $(\Z,\sfd)$. Let $M\in\N$ and let $(\rho_i)_{i=0}^M \subset L^\infty(\mm_\infty)$ be probability densities with $\supp(\rho_i) \subset B$ for all $i=0, \ldots, M$ and for some  bounded set $B\subset \Z$.  Fix $\eps>0$. Then, for all $n\in\N$ there are probability densities $(\rho_{i,n})_{i=0}^M \subset L^\infty(\mm_n)$ with $\supp(\rho_{i,n}) \subset I_\eps(B)$ such that:
\begin{enumerate}[\rm(\rm a\rm)]
    \item $\varlimsup_{n \uparrow \infty} W_\infty(\rho_{i,n} \mm_n, \rho_{i+1,n} \mm_n) \leq W_\infty(\rho_i \mm_\infty, \rho_{i+1} \mm_\infty) $ for all $i=0,\ldots, M-1$; \label{Winf-est}
    \item $\varlimsup_{n \up\infty} \| \rho_{i,n}\|_{L^\infty(\mm_n)} \leq \|\rho_i\|_{L^\infty(\mm_\infty)}$ for all $i=0,\ldots,M$; \label{L^infty-est}
    \item $\rho_{i,n}$ converges in $L^q$-strong to $\rho_i$ as $n\uparrow\infty$ for all $q \in [1, \infty)$ and $i=0,\ldots,M$. \label{Lq strong-est}
\end{enumerate}
\end{lemma}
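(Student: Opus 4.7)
The plan is to approximate each density $\rho_i$ by a step function adapted to a \emph{common} finite Borel partition $\{F_k^\eps\}_{k=1}^{K_\eps}$ of $2B\subset\Z$ of mesh size $\eps>0$, to transfer these step functions to $\X_n$ via the weak convergence $\mm_n\to\mm_\infty$, and to conclude by a diagonal argument in $\eps,n$. Using the same partition for every $i$ is what yields the $W_\infty$-bound in \eqref{Winf-est}: a single-density approximation (as in Lemma~\ref{lem:approx muinf}) delivers the $L^\infty$- and $L^q$-bounds but ignores the joint transport structure, whereas a cell-based approximation preserves it by pushing an optimal $\infty$-plan through the partition.

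For each $\eps>0$, choose $\{F_k^\eps\}_k$ with $\diam(F_k^\eps)<\eps$, $\mm_\infty(F_k^\eps)>0$ and $\mm_\infty(\partial F_k^\eps)=0$, nested as $\eps$ decreases; this is standard by first using Ulam tightness of the finite measure $\mm_\infty\restr{2B}$ to reduce to a compact core, then subdividing by intersecting with spheres of generic radii, whose boundaries are $\mm_\infty$-null by Fubini. Setting $m_i^k\coloneqq\int_{F_k^\eps}\rho_i\,\d\mm_\infty$, define
\[
\bar\rho_i^\eps\coloneqq\sum_{k}\frac{m_i^k}{\mm_\infty(F_k^\eps)}\nchi_{F_k^\eps}\in L^\infty(\mm_\infty),\qquad
\bar\rho_{i,n}^\eps\coloneqq\sum_{k}\frac{m_i^k}{\mm_n(F_k^\eps)}\nchi_{F_k^\eps}\in L^\infty(\mm_n),
\]
the latter well-defined for $n$ large, since the portmanteau theorem together with $\mm_\infty(\partial F_k^\eps)=0$ gives $\mm_n(F_k^\eps)\to\mm_\infty(F_k^\eps)$. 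Both are probability densities supported in $2B$. Given $\alpha_i\in\Opt_\infty(\rho_i\mm_\infty,\rho_{i+1}\mm_\infty)$, the product-type coupling
\[
\beta_{i,n}^\eps\coloneqq\sum_{k,l}\alpha_i(F_k^\eps\times F_l^\eps)\,\Big(\tfrac{\nchi_{F_k^\eps}\mm_n}{\mm_n(F_k^\eps)}\Big)\otimes\Big(\tfrac{\nchi_{F_l^\eps}\mm_n}{\mm_n(F_l^\eps)}\Big)
\]
is admissible between $\bar\rho_{i,n}^\eps\mm_n$ and $\bar\rho_{i+1,n}^\eps\mm_n$ with $\infty$-cost at most $W_\infty(\rho_i\mm_\infty,\rho_{i+1}\mm_\infty)+2\eps$, since $\alpha_i(F_k^\eps\times F_l^\eps)>0$ forces $\sfd(F_k^\eps,F_l^\eps)\le W_\infty(\rho_i\mm_\infty,\rho_{i+1}\mm_\infty)$.

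For fixed $\eps$ and $n\uparrow\infty$, I would then check (i) $\limsup_n\|\bar\rho_{i,n}^\eps\|_{L^\infty(\mm_n)}=\max_k m_i^k/\mm_\infty(F_k^\eps)\le\|\rho_i\|_{L^\infty(\mm_\infty)}$; (ii) $\bar\rho_{i,n}^\eps\to\bar\rho_i^\eps$ in $L^q$-strong for every $q\in[1,\infty)$, since each $\nchi_{F_k^\eps}$ converges $L^q$-strong (the $L^q(\mm_n)$-norm equals $\mm_n(F_k^\eps)^{1/q}$, convergent by portmanteau, and the weak-measure convergence in duality with $C_{bs}(\Z)$ again uses $\mm_\infty(\partial F_k^\eps)=0$), and $\bar\rho_{i,n}^\eps$ is a finite combination with converging coefficients (the $q=1$ case is covered by the $\sigma$-trick, since $\sqrt{\bar\rho_{i,n}^\eps}$ is itself a step function with converging coefficients); and (iii) the $\infty$-cost bound for $\beta_{i,n}^\eps$ just proved. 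Separately, as $\eps\downarrow 0$ along the refining partitions, $\bar\rho_i^\eps\to\rho_i$ in $L^q(\mm_\infty)$ for every $q\in[1,\infty)$ by the martingale convergence theorem applied to the finite probability space $(2B,\mm_\infty(2B)^{-1}\mm_\infty\restr{2B})$. A standard diagonal extraction $\rho_{i,n}\coloneqq\bar\rho_{i,n}^{\eps_{j(n)}}$ with $\eps_j\downarrow 0$ and $j(n)\uparrow\infty$ slow enough then delivers \eqref{Winf-est}, \eqref{L^infty-est} and \eqref{Lq strong-est}. The main delicate point is that (iii) requires $\eps\to 0$ while (ii) requires $n$ large compared with the mesh; the balance is possible precisely because, at each fixed $\eps$, the finiteness of the partition makes the portmanteau convergence $\mm_n(F_k^\eps)\to\mm_\infty(F_k^\eps)$ automatically uniform in $k$.
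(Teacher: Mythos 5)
Your approach is genuinely different from the paper's. The paper reduces to the case where $\mm_n,\mm_\infty$ are probability measures on $2B$ (so that $W_2(\mm_n,\mm_\infty)\to 0$), then transports $\rho_i\mm_\infty$ along an optimal plan $\alpha^{(n)}\in{\rm Opt}_2(\mm_\infty,\mm_n)$ restricted to the ``short-distance'' set $E_n=\{\sfd(x,y)\le\eps_n\}$ (controlled by Markov/Chebyshev), with the leftover mass spread uniformly; the delicate point there is an iterative pruning claim that keeps the sub-densities and sub-plans $\tilde\eta^{i,i+1}$ consistent along the whole chain $i=0,\dots,M$ while losing only $O(\eps_n^2)$ mass. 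Your proposal replaces this by a cell-level coupling through a fixed finite partition and a diagonal extraction. Where your construction applies, the cell-coupling is arguably cleaner, since admissibility and the $W_\infty$-bound come for free from the matching $m_i^k$-weights, so no analogue of the paper's pruning Claim is needed.

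However, there is a genuine gap. The whole construction hinges on the existence, for each $\eps>0$, of a \emph{finite} Borel partition $\{F_k^\eps\}$ of $2B$ with $\diam(F_k^\eps)<\eps$ for \emph{every} cell, and this requires $2B$ to be totally bounded. The Lemma as stated (and the paper's proof) impose no such assumption on $(\Z,\sfd)$: $\Z$ may be infinite-dimensional, in which case no bounded ball is totally bounded. Your appeal to Ulam tightness only produces a compact core $K_\delta\subset 2B$ with $\mm_\infty(2B\setminus K_\delta)<\delta$, which you can indeed partition finely, but what happens to the remainder $F_0:=2B\setminus K_\delta$ is not addressed. This is not a harmless omission: if any of the densities $\rho_i$ deposits positive mass in $F_0$ (which is the generic situation since you cannot choose $K_\delta$ to contain $\supp\rho_i$ unless the latter is itself compact), then the cell-coupling $\beta_{i,n}^\eps$ transports a positive amount of mass at a distance up to $\diam(2B)$, and the crucial bound
\[
W_\infty(\bar\rho_{i,n}^\eps\mm_n,\bar\rho_{i+1,n}^\eps\mm_n)\le W_\infty(\rho_i\mm_\infty,\rho_{i+1}\mm_\infty)+2\eps
\]
fails outright, since $W_\infty$ sees \emph{any} positive mass transported far. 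Truncating away the mass in $F_0$ and renormalizing re-introduces exactly the bookkeeping difficulty that the paper's inductive Claim was designed to handle (the pruning has to be propagated consistently through all $M$ densities and $M-1$ couplings), so the fix is not routine. In the finite-dimensional applications in this paper (Proposition \ref{prop:infty_polgeo}, where the spaces are $\CD(K,N)$ with $N<\infty$ and hence proper), your argument would go through; but it does not establish the Lemma in the generality in which it is stated and used as a building block. A secondary, minor point: in the diagonal step, the rate of the weak convergence $\bar\rho_{i,n}^\eps\mm_n\weakto\bar\rho_i^\eps\mm_\infty$ depends on the test function $\phi$; to make the diagonal argument airtight one should either test against a countable dense family (e.g.\ bounded Lipschitz functions) or use a metric for weak convergence -- you should say so, though this is standard.
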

\begin{proof}
We subdivide the proof into different steps.

\noindent\textbf{Reduction to $W_2$-converging reference measures}.
Without loss of generality, we can assume that $\mm_n,\mm_\infty$ are probability measures supported in $ I_\eps(B)$, so in particular they satisfy $W_q(\mm_n, \mm_\infty) \rightarrow 0$ for all $q \in [1, \infty)$ (though it is generally false that $W_\infty(\mm_n, \mm_\infty) \rightarrow 0$). Assume, indeed, that the conclusion is valid in this case and let us show how to prove the general claim. Consider $\eta=\nchi_{ I_{\eps/2}(B)}$ with $0<\eps<1$ such that $\mm_\infty(\partial ( I_{\eps/2}(B)))=0$ (which is the case for a.e. $\eps$), so that by standard arguments we have $\lim_n \int_{ I_{\eps/2}(B)} g \,\d\mm_n=\int_{ I_{\eps/2}(B)} g \,\d\mm_\infty$ for all $g \in C_b(\Z)$. Then if we replace $\mm_n,\mm_\infty$ respectively with $\mm_n'\coloneqq \|\eta\|_{L^1(\mm_n)}^{-1}\eta\, \mm_n,\mm_\infty'\coloneqq \|\eta\|_{L^1(\mm_\infty)}^{-1}\eta \, \mm_\infty$ we still have $\mm_n' \weakto \mm_\infty'$. Therefore, if we define
\[
\rho'_{i} \coloneqq \frac{\rho_{i}}{\|\rho_i\|_{L^1(\mm_\infty')}},\qquad \text{so that}\qquad \rho'_i\mm_\infty' \in \PP(\X_\infty),
\]
the conclusion of the Lemma provides the existence of $\rho'_{i,n} \mm_n' \in \PP(\X_n)$ (in particular we can assume $\rho'_{i,n}=0$ outside $ I_{\eps/2}(B)$) satisfying all the listed properties \eqref{Winf-est},\eqref{L^infty-est},\eqref{Lq strong-est}. Thus, setting
\[
    \rho_{i,n}\mm_n\coloneqq \frac{\rho_{i,n}'\mm_n}{\|\rho_{i,n}'\|_{L^1(\mm_n)}}   \in \PP(\X_n),
\]
we get
\[
    \rho_{i,n}\mm_n = \frac{\rho_{i,n}'\|\eta\|_{L^1(\mm_n)} \eta\, \mm_n}{\|\rho_{i,n}'\|_{L^1(\mm_n)}\|\eta\|_{L^1(\mm_n)}} = \frac{\rho_{i,n}' \mm_n'}{\|\rho_{i,n}'\|_{L^1(\mm_n')}} = \rho_{i,n}'\mm_n' . 
\]
Thanks to this identity, we see that the conclusions \eqref{Winf-est},\eqref{L^infty-est},\eqref{Lq strong-est} for $(\rho_{i,n})$ follow straightforwardly from those of $\rho_{i,n}'$. Thus, we can assume and will assume in the sequel that
\[\eps_n:=\sqrt{W_2(\mm_n, \mm_\infty)} \to 0.\] 
\noindent\textbf{Construction of $(\rho_{i,n})$ and proof of \eqref{Winf-est},\eqref{L^infty-est},\eqref{Lq strong-est}}. Consider $\alpha^{(n)} \in {\rm Opt}_2(\mm_\infty, \mm_n)$ and its disintegration $\alpha^{(n)}=\mm_\infty \otimes\{ \alpha^{(n)}_x\}_x$ with respect to the first marginal (see, e.g., \cite{AmbrosioBrueSemola24_Book}). If we define
\[E_n:=\{(x,y) \in \Z \times \Z: \sfd(x,y) \leq \eps_n \},\]
 then Markov's inequality yields
\begin{equation}\label{markov-gamma}
\alpha^{(n)}(E_n^c)  \leq \frac1{\eps_n^2}W_2^2(\mm_n, \mm_\infty)=\eps_n^2.
\end{equation}

Now, the rough idea of the proof can be explained as follows: we would like to build the density $\rho_{i,n}$ by moving most of the mass of $\rho_i \mm_\infty$ with the transport plan $\rho_i(x) \alpha^{(n)} \res E_n$, and spreading the remaining mass (infinitesimal as $n \uparrow \infty$) uniformly over $\mm_n$, so that it does not contribute to $W_\infty(\rho_{i,n} \mm_n, \rho_{i+1,n} \mm_\infty)$. However, in order to make this construction work, some technical refinements are needed. Namely, for $0 \leq i < M-1$ and $\eta^{i, i+1} \in {\rm Opt}_\infty(\rho_i\mm_\infty, \rho_{i+1}\mm_\infty)$, we claim the following.
\begin{claim}\label{claim1}
There are densities $(\Tilde{\rho}_i)_{i=0}^{M} \subset L^\infty_+(\mm_\infty)$ and plans $(\tilde{\eta}^{i, i+1})_{i=0}^{M-1} \subset \MM_+(\Z\times \Z)$ (depending on $n \in \N$) such that
\begin{enumerate}[\rm(1\rm)]
    \item $\Tilde{\rho}_i(x) \leq \rho_i(x) \alpha_x^{(n)}(E_n)$ $\mm_\infty$-a.e.\ $x \in \X_\infty$ for all $i=0,\dots,M$;
    \item $\| \Tilde{\rho}_0\|_{L^1(\mm_\infty)}= \ldots =\| \Tilde{\rho}_M\|_{L^1(\mm_\infty)} \geq 1-\eps_n^2 \sum_{i=0}^M \| \rho_i\|_{L^\infty(\mm_\infty)} $;
    \item $\tilde{\eta}^{i, i+1} \leq \eta^{i, i+1}$ and $\tilde{\eta}^{i, i+1} \in {\rm Adm}(\Tilde{\rho}_i \mm_\infty, \Tilde{\rho}_{i+1} \mm_\infty)$ for all $i=0,\dots M-1$.
\end{enumerate}
\end{claim}
\noindent Now we actually construct $(\rho_{i,n})_{i=0, \ldots, M}$ taking Claim \ref{claim1} for granted. Fix any $i=0, \ldots, M$ and set
\[\alpha^{(n), i}:= \Tilde{\rho}_i \mm_\infty \otimes \left\{ \frac{\alpha_x^{(n)} \res E_n}{\alpha_x^{(n)}(E_n)}\right\}+(\rho_i-\Tilde{\rho}_i)\mm_\infty \times \mm_n, \qquad \rho_{i,n} \mm_n:= P^2_\sharp(\alpha^{(n), i}),\]
where $P^1, P^2: \Z \times \Z \to \Z$ are the canonical projections on the two components. Notice that $P^1_\sharp(\alpha^{(n), i})=\rho_i \mm_\infty$ and that $P^2_\sharp(\alpha^{(n), i}) \ll \mm_n$ (hence $\rho_{i,n}$ is well-defined), indeed by point (1)
\[P^2_\sharp(\alpha^{(n), i})=\|\rho_i-\Tilde{\rho}_i\|_{L^1(\mm_\infty)}\mm_n+\int \frac{\Tilde{\rho}_i(x)}{\alpha^{(n)}_x(E_n)} \alpha^{(n)}_x(x, \cdot) \,\d\mm_\infty(x) \leq (\|\rho_i-\Tilde{\rho}_i\|_{L^1(\mm_\infty)}+ \|\rho_i\|_{L^\infty(\mm_\infty)})\mm_n.\]
The above computation also gives the bound $\| \rho_{i,n}\|_{L^\infty(\mm_n)} \leq c_n+\| \rho_i\|_{L^\infty(\mm_\infty)}$, where we denote $c_n:=\| \rho_i-\Tilde{\rho}_i\|_{L^1(\mm_\infty)}=1-\| \Tilde{\rho}_i\|_{L^1(\mm_\infty)}$ (by (1) again). By (2) we have $c_n \leq \eps_n^2 \sum_j \|\rho_j\|_{L^\infty(\mm_\infty)}$, which proves (\ref{L^infty-est}) in our main statement. \\
Let us now show the validity of (\ref{Winf-est}). Let
\begin{equation}\label{beta_n_i}
\beta^{(n), i}:=\Tilde{\rho}_i \mm_\infty \otimes \left\{ \frac{\alpha_x^{(n)} \res E_n}{\alpha_x^{(n)}(E_n)}\right\}=\alpha^{(n), i}-(\rho_i-\Tilde{\rho}_i)\mm_\infty \times \mm_n,
\end{equation}
and notice that $P^2_\sharp(\beta^{(n), i})=(\rho_{i,n}-c_n) \mm_n$.
We estimate $W_\infty(\rho_{i,n}, \rho_{i+1,n})$ from above using the transport plan
\begin{equation}\label{candidate-plan}
\beta^{(n), i+1} \circ \tilde{\eta}^{i, i+1} \circ (\beta^{(n), i})^{-1}+({\rm Id}, {\rm Id})_\sharp(c_n \mm_n) \in {\rm Adm}(\rho_{i,n} \mm_n, \rho_{i+1,n} \mm_n).
\end{equation}
Here the ``inversion" and ``composition" of transport plans are defined in the natural way. That is, we denote $\eta^{-1}:= S_\sharp\eta$, with $S: (x,y) \mapsto (y,x)$, and given $\eta_1= \mu_1 \otimes \{(\eta_1)_x \} \in {\rm Adm}(\mu_1, \mu_2)$, $\eta_2=\mu_2 \otimes \{(\eta_2)_y \} \in {\rm Adm}(\mu_2, \mu_3)$ we set $\eta_2 \circ \eta_1:= \mu_1 \otimes \{\eta_x\} \in {\rm Adm}(\mu_1, \mu_3)$, where
\[\eta_x=(\eta_2 \circ \eta_1)_x:=\int (\eta_2)_y(y, \cdot) \, \d (\eta_1)_x(x,y).\]
It follows by the construction that $\eta_2 \circ \eta_1$ is concentrated on any set of the form $\{(x,z): \exists\, y \mbox{ s.t. }(x,y) \in C_1, (y,z) \in C_2\}$, given two sets $C_1, C_2$ so that $\eta_j$ is concentrated on $C_j$ for $j=1,2$. Thus, we have
\begin{equation}\label{eq:stimaWinf}
W_\infty(\rho_{i,n} \mm_n, \rho_{i+1,n} \mm_n) \overset{\eqref{candidate-plan}}{\leq} \|\sfd(\cdot,\cdot)\|_{L^\infty(\tilde{\eta}^{i, i+1})}+2\eps_n \leq W_\infty(\rho_i \mm_\infty, \rho_{i+1} \mm_\infty)+2\eps_n,
\end{equation}
where we have also exploited the fact that $\beta^{(n), i}, \beta^{(n), i+1}$ are concentrated on $E_n$, property (3) (which also guarantees that the compositions in (\ref{candidate-plan}) are well-posed) and the optimality of $\eta^{i, i+1}$. \\
Finally, we prove (\ref{Lq strong-est}). The weak convergence $\rho_{i,n} \mm_n \weakto \rho_i \mm_\infty$ follows by the previous estimates:
\[ \begin{split} W_2^2(\rho_i \mm_\infty, \rho_{i,n} \mm_n) & \leq \int \sfd^2(x,y) \, \d\alpha^{(n), i} \leq \int  \sfd^2(x,y) \, \d\beta^{(n), i}+ c_n(\diam(2B))^2 \\
& \leq\eps_n^2 \,\left(1+\sum_j \| \rho_j\|_{L^\infty(\mm_\infty)}(\diam(2B))^2 \right) \to 0.\end{split}\]
In order to check the strong $L^q$ estimate for $q \in [1, \infty)$, notice that if we write $\alpha^{(n)}=\mm_n \otimes \{\alpha^{(n)}_y\}_y$, disintegrating with respect to the second marginal, then we have
\begin{equation}\label{dis-formula}
\rho_{i,n}(y)=\int_{E_n} \frac{\Tilde{\rho}_i(x)}{\alpha_x^{(n)}(E_n)} \, \d \alpha_y^{(n)}(x,y)+c_n,
\end{equation}
as one can infer from (\ref{beta_n_i}) by computing explicitly the coupling $ \langle \varphi, P^2_\sharp(\beta^{(n), i})  \rangle$, for arbitrary $\varphi \in C_{bs}(\Z)$. Combining (\ref{dis-formula}) with Jensen's inequality and the property (1), we get
\[\begin{split}
\int [\rho_{i,n}(y)]^q \, \d \mm_n(y) &\leq \int \int \left(\frac{\Tilde{\rho}_i(x)}{\alpha_x^{(n)}(E_n)}+c_n \right)^q \d\alpha^{(n)}_y(x,y) \, \d \mm_n(y) \\
&\leq \int (\rho_i(x)+c_n)^q \, \d\alpha^{(n)}(x,y)=\int (\rho_i(x)+c_n)^q \, \d\mm_\infty(x),
\end{split}\]
which gives (\ref{Lq strong-est}) (for $q>1$) by taking the limit $n \up \infty$. The case $q=1$ follows instead the same arguments as in the proof of Lemma \ref{lem:approx muinf}.

\noindent\textbf{Proof of Claim \ref{claim1} and conclusion}. To conclude the proof, we are only left to show Claim \ref{claim1}. We construct the densities $(\Tilde{\rho}_i)_{i \leq M}$ and the subplans $(\tilde{\eta}^{i, i+1})_{i<M}$ by an inductive procedure in $M+1$ steps, starting from $\rho_i^{(0)}:=\rho_i$ and $\eta^{i, i+1, (0)}:=\eta^{i, i+1}$, so that at each step $k=1, \ldots, M+1$ we build families $(\rho_i^{(k)})_{i=0}^M$ and $(\eta^{i, i+1, (k)})_{i=0}^{M-1}$ with the following properties:
\\
\begin{enumerate}[(i)]
    \item $\eta^{i, i+1, (k)} \leq \eta^{i, i+1, (k-1)} $ for all $ 0\le i<M$;
    \item $\eta^{i, i+1, (k)} \in {\rm Adm}(\rho_i^{(k)}\mm_\infty, \rho_{i+1}^{(k)} \mm_\infty)$ for all $ 0\le i<M$;
    \item $\| \rho_0^{(k)}\|_{L^1(\mm_\infty)}= \ldots=\| \rho_M^{(k)}\|_{L^1(\mm_\infty)}$ and $\| \rho_0^{(k)}\|_{L^1(\mm_\infty)} \geq \| \rho_0^{(k-1)}\|_{L^1(\mm_\infty)}-\eps_n^2 \|\rho_{k-1}\|_{L^\infty(\mm_\infty)}$;
    \item $\rho_i^{(k)}(x) \leq \rho_i(x)\alpha^{(n)}_x(E_n) \, \,\,\,\mm_\infty\mbox{-a.e. }x$ for all $ 0\le i\le k-1$; \\
\end{enumerate}
Hence the choices $\Tilde{\rho}_i:=\rho_i^{(M+1)}$ and $\tilde{\eta}^{i, i+1}:=\eta^{i, i+1, (M+1)}$ fulfill all the properties listed in Claim \ref{claim1}. The inductive step (from $k-1$ to $k$) proceeds as follows: we ``cut mass" from the density $\rho_{k-1}^{(k-1)}$ by setting 
\[\rho_{k-1}^{(k)}(x):=\rho_{k-1}^{(k-1)}(x) \alpha^{(n)}_x(E_n).\]
Note that $\rho_{k-1}^{(k)}(x) \leq \rho_{k-1}(x) \alpha^{(n)}_x(E_n)$, as $(\rho_{k-1}^{(h)})_{h=0}^{k-1}$ is a decreasing family (so (iv) holds for $i=k-1$), and that we have, by (\ref{markov-gamma}),
\begin{equation}\label{mass-loss}
\begin{split}
\|\rho_{k-1}^{(k)}\|_{L^1(\mm_\infty)}&=\| \rho_{k-1}^{(k-1)}\|_{L^1(\mm_\infty)}-\int \rho_{k-1}^{(k-1)} \alpha^{(n)}_x(E_n^c) \, \d\mm_\infty \\
&\geq \| \rho_0^{(k-1)}\|_{L^1(\mm_\infty)}-\eps_n^2\| \rho_{k-1}\|_{L^\infty(\mm_\infty)}.
\end{split} 
\end{equation}
Now we adjust the other densities and transport plans accordingly to the mass loss which happened to the $k$-th density, so that (ii) and (iii) continue to hold. The argument is reminiscent of that in the proof of Proposition \ref{propoptsubplan} (with the maps $P^1, P^2$ playing the roles of $\e_0$, $\e_1$). \\
So, for $i \geq k-1$, consider the disintegrations $\eta^{i, i+1, (k-1)}=\rho_i^{(k-1)}\mm_\infty \otimes \{\eta^{i, i+1, (k-1)}_x\}_x$ with respect to the first marginal. Set
\[\eta^{k-1, k, (k)}:=\rho_{k-1}^{(k)} \mm_\infty \otimes  \{\eta^{k-1, k, (k-1)}_x\}_x, \qquad \rho_k^{(k)} \mm_\infty:=P^2_\sharp(\eta^{k-1, k, (k)}) \leq \rho_k^{(k-1)} \mm_\infty,\]
and, inductively for $i>k-1$, define
\[\eta^{i, i+1, (k)}:=\rho_i^{(k)} \mm_\infty \otimes  \{\eta^{i, i+1, (k-1)}_x\}_x, \qquad \rho_{i+1}^{(k)} \mm_\infty:=P^2_\sharp(\eta^{i, i+1, (k)}) \leq \rho_{i+1}^{(k-1)} \mm_\infty.\]
Similarly, for $i<k-1$ consider the disintegrations $\eta^{i, i+1, (k-1)}=\rho_{i+1}^{(k-1)}\mm_\infty \otimes \{\eta^{i, i+1, (k-1)}_y\}_y$ with respect to the second marginal and define inductively (but this time proceeding backwards, from $i=k-2$ to $i=0$)
\[\eta^{i, i+1, (k)}:=\rho_{i+1}^{(k)} \mm_\infty \otimes  \{\eta^{i, i+1, (k-1)}_y\}_y, \qquad \rho_i^{(k)} \mm_\infty:=P^1_\sharp(\eta^{i, i+1, (k)}) \leq \rho_i^{(k-1)} \mm_\infty.\]
Observe that all the objects defined in this construction share the same mass, which we estimated in (\ref{mass-loss}), so (iii) is proved. Finally, properties (i) and (ii) follow immediately from the construction, and thus also property (iv) for $i=0, \ldots, k-2$.
\end{proof}

Next, we shall assume to work under the assumptions of Theorem \ref{thm:CDq independent} to be able to invoke Proposition \ref{prop:good infty plan}.
\begin{proposition}[Polygonal $W_\infty$-geodesics on varying spaces]\label{prop:infty_polgeo}
Let $\Xdmxn$ be $q$-essentially non-branching pointed metric measure spaces with $\mm_n(\X_n)<\infty$ for all $q\in(1,\infty),n\in\N$. Assume $\X_n$ satisfies ${\sf CD}(K,N)$ for some $K \in \R,N \in [1,\infty)$ and that $\X_n \overset{pmG}{\rightarrow} \X_\infty$ for some $\Xdmxinf$. 

Then, for every $\infty$-test plan $\eta  \in \PP(C([0,1],\X_\infty))$ with bounded support and for every $M \in \N$, there exists an $\infty$-test plan $\pi_n^M   \in \PP(C([0,1],\X_n))$ so that:
\begin{itemize}
\item[(i)] $\pi^M_n$ is an $M$-polygonal geodesic (with bounded support), that is
\[
\left({\sf rest}_{\frac{i}{M}}^{\frac{i+1}{M}} \right)_\sharp \pi^M_{n} \in \OptGeo_\infty( (\e_\frac{i}{M})_\sharp \pi^{M}_n, \left(\e_\frac{i+1}{M})_\sharp \pi^M_n  \right),\qquad \forall i=0,1,...,M-1;
\]
\item[(ii)] $\limsup_n\Lip(\pi^M_n) \le \Lip(\eta)$ for all $M \in \N$;
\item[(iii)] $\limsup_n\Comp(\pi^M_{n})\leq e^{\frac{K^-}{12}\frac{\Lip(\eta)^2}{M^2}} \Comp(\eta)$ for all $M \in \N$;
\item[(iv)]for all $M \in\N$ and $q \in [1,\infty)$, it holds
\[
    \frac{\d (\e_0)_\sharp \pi^{M}_{n} }{\d \mm_{n}} \to \frac{\d (\e_0)_\sharp \eta }{\d \mm_{\infty}}, \qquad \frac{\d (\e_1)_\sharp \pi^{M}_{n} }{\d \mm_{n}}  \to \frac{\d (\e_1)_\sharp \eta }{\d \mm_{\infty}}, \qquad \text{in $L^q$-strong as $n\uparrow\infty$}.
\]
\end{itemize}
Furthermore, if $(\Z,\sfd)$ is the realization of the convergence, $A\subset \Z$ is open, and ${\rm dist}([\eta],A^c)>0 $, then we can also require that ${\rm dist}([\pi_n^M],A^c)>0$ for all $n$ sufficiently large and $M$ sufficiently large depending also on $A$.
\end{proposition}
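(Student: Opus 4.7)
The plan is to build $\pi_n^M$ as the concatenation of $\infty$-optimal dynamical plans between suitably chosen marginals at the grid $t_i := i/M$, and then verify the four properties by reducing them to estimates already encoded in the endpoint approximation lemma. The first task is to ensure Proposition \ref{prop:good infty plan} is applicable on each $\X_n$. Since the spaces are $q$-essentially non-branching for every $q$ and satisfy $\CD(K,N)$ with finite reference measure, Theorem \ref{thm:CDq independent} upgrades this to $\CD_q(K,N)$ for every $q\in(1,\infty)$, and by \cite{Kell17} (cf.\ Remark \ref{rem:rajala improved}) the improved density estimate \eqref{eq:rajala improved} holds for optimal dynamical plans between boundedly supported $L^\infty$-densities. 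As this is the only property of the strong $\CD$ condition used in the proof of Proposition \ref{prop:good infty plan}, that proposition's conclusion holds in our setting as well.

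Next, I set $\rho_i := \frac{\d(\e_{t_i})_\sharp\eta}{\d\mm_\infty}$ for $i=0,\dots,M$. Boundedness of the support of $\eta$ together with the $\infty$-test plan hypothesis give $\|\rho_i\|_{L^\infty(\mm_\infty)}\le \Comp(\eta)$ and a common bounded support $B\subset\Z$, while \eqref{eq:Winf_admissible} gives $W_\infty(\rho_i\mm_\infty,\rho_{i+1}\mm_\infty)\le \Lip(\eta)/M$. Applying Lemma \ref{lemma_approxWinf} produces probability densities $\rho_{i,n}\in L^\infty(\mm_n)$ supported in $2B$ with the three asymptotic properties (a), (b), (c) of that lemma. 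For each $i$, Proposition \ref{prop:good infty plan} applied to $(\rho_{i,n}\mm_n,\rho_{i+1,n}\mm_n)$ yields $\sigma_{i,n}\in\OptGeo_\infty(\rho_{i,n}\mm_n,\rho_{i+1,n}\mm_n)$ with the expected exponential compression bound. A standard gluing (e.g.\ \cite[Lemma 2.1.1]{G11}) then produces $\pi_n^M\in\PP(C([0,1],\X_n))$ satisfying $({\sf rest}_{t_i}^{t_{i+1}})_\sharp \pi_n^M = \sigma_{i,n}$, concentrated on piecewise-geodesic curves with range in $2B$, which establishes (i).

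The remaining properties follow by direct manipulation: (iv) is immediate since the endpoint marginals of $\pi_n^M$ are $\rho_{0,n}\mm_n$ and $\rho_{M,n}\mm_n$, converging $L^q$-strong to the required limits by (c) of Lemma \ref{lemma_approxWinf}; (ii) follows from \eqref{eq:Lippolygonal} and \eqref{eq:dynOPinfty}, which yield $\Lip(\pi_n^M) = M\max_i W_\infty(\rho_{i,n}\mm_n,\rho_{i+1,n}\mm_n)$, and taking $\limsup_n$ via (a) of Lemma \ref{lemma_approxWinf} bounds this by $\Lip(\eta)$; (iii) is analogous, using $\Comp(\pi_n^M) = \max_i \Comp(\sigma_{i,n})$ and taking $\limsup_n$ via (a) and (b) of Lemma \ref{lemma_approxWinf} together with $\|\rho_i\|_{L^\infty(\mm_\infty)}\le\Comp(\eta)$ and $W_\infty(\rho_i\mm_\infty,\rho_{i+1}\mm_\infty)\le\Lip(\eta)/M$ to absorb the exponential factor. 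The main subtlety is the uniform availability of \eqref{eq:rajala improved} in $n$ in the essentially non-branching $\CD(K,N)$ setting, which is precisely what \cite{ACCMcS20} together with \cite{Kell17} deliver; without it one would have to pass through the $q<\infty$ polygonal construction and take a limit $q_k\uparrow\infty$ as in the proof of Proposition \ref{prop:good infty plan}.
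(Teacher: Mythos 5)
Your proposal is correct and follows the paper's proof closely: fix the grid marginals $\rho_i\mm_\infty = (\e_{i/M})_\sharp\eta$, approximate them on $\X_n$ via Lemma \ref{lemma_approxWinf}, connect consecutive approximations with $W_\infty$-optimal dynamical plans obtained from Proposition \ref{prop:good infty plan}, glue, and read off (i)--(iv) from \eqref{eq:Lippolygonal}, \eqref{eq:dynOPinfty}, \eqref{eq:Winf_admissible}, and the three conclusions of Lemma \ref{lemma_approxWinf}. The one place where you are more careful than the paper's write-up is in justifying that Proposition \ref{prop:good infty plan}, stated there under a strong $\CD_{q_k}(K,\infty)$ hypothesis, applies to essentially non-branching $\CD(K,N)$ spaces: you correctly isolate that its proof needs only uniqueness of the $q$-optimal dynamical plan plus the improved density estimate \eqref{eq:rajala improved}, both of which the paper itself recovers from Theorem \ref{thm:CDq independent} and Kell's theorem (as flagged in Remark \ref{rem:rajala improved}). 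This is precisely the implicit step the paper takes, so there is no genuine gap — you simply made it explicit.
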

\begin{proof}
Fix  $M\in\N$, an $\infty$-test plan $\eta \in \PP(C([0,1],\X_\infty))$ with bounded support and a sequence $q_k \uparrow \infty$ as $k\in\N$ goes to infinity. By assumption, each $\X_n$ is a ${\sf CD}_{q_k}(K,N)$ space for all $k\in\N$ thanks to Theorem \ref{thm:CDq independent}. We now build the sequence of $(\pi^M_n)$ as required by the statement.

We can apply Lemma \ref{lemma_approxWinf} with $\rho_i\mm_\infty\coloneqq (\e_{i/M})_\sharp \eta \in \PP(\X_\infty)$ to get $ \rho_{i,n}\mm_n \in \PP(\X_n)$ satisfying
\begin{subequations}\begin{align}\label{eq:limsdens_i_inf}
&\limsup_{n\uparrow\infty}\|\rho_{i,n}\|_{L^\infty(\mm_n)}\le \|\rho_i\|_{L^\infty(\mm_\infty)}\le {\rm Comp}(\eta),\\
&\varlimsup_{n \uparrow \infty} W_\infty(\rho_{i,n} \mm_n, \rho_{i+1,n} \mm_n) \leq W_\infty(\rho_i \mm_\infty, \rho_{i+1} \mm_\infty), \label{eq:stima infinito}
\\
&\rho_{i,n} \to \rho_{i} \qquad \text{in $L^q$-strong, for all }q\in[1,\infty), \label{eq:Winftyconvergence_i_inf}
\end{align}
\end{subequations}
and $\rho_{i,n}$ are all supported in a uniformly bounded region, for all $i$ and $n\in\N$. 

Now, for every $i=0,...,M-1$, consider $\eta_{i,n} \in \OptGeo_\infty(\rho_{i,n}\mm_n,\rho_{i+1,n}\mm_n)$  given by Proposition \ref{prop:good infty plan} satisfying
\begin{equation}
\Comp(\eta_{i,n})\le e^{\frac{K^-}{12}W_\infty^2(\rho_{i,n}\mm_n,\rho_{i+1,n}\mm_n)}\|\rho_{i,n}\|_{L^\infty(\mm_n)}\vee\|\rho_{i+1,n}\|_{L^\infty(\mm_n)}.\label{eq:comp infty proof}
\end{equation}
Fixing the uniform grid $t_{i} = \frac iM$ of $[0,1]$ and thanks to a gluing argument as previously done, we can build a plan $\pi^M_n\in \PP(C([0,1],\X_n))$ satisfying
\[
    \left({\sf rest}_{t_{i}}^{t_{i+1}}\right)_\sharp \pi^M_n = \eta_{i,n},\qquad \forall i=0,\dots M-1.
\]
We claim that $\pi_n^M$ satisfies all the listed properties. The fact that it is an $\infty$-test plan follows from the very construction, and the fact that the supports are equibounded. Property (i) follows again by the gluing construction. To see (ii), we simply estimate as follows
\[ 
\begin{split}
\limsup_{n\to\infty} \Lip(\pi^M_n) &\overset{\eqref{eq:Lippolygonal}}{=} \limsup_{n\to\infty} M \bigvee_{i=0}^{M-1} \Lip \left( \left({\sf rest}_{t_{i}}^{t_{i+1}}\right)_\sharp \pi^M_n \right)  =\limsup_{n\to\infty} M \bigvee_{i=0}^{M-1} \Lip(\eta_{i,n})\\
& = M\bigvee_{i=0}^{M-1} \lim_{n\to\infty}  W_\infty(\rho_{i,n}\mm_n,\rho_{i+1,n}\mm_n) \overset{\eqref{eq:Winftyconvergence_i_inf}}{=} M \bigvee_{i=0}^{M-1} W_\infty\left(\left(\e_{t_{i}})_\sharp \eta,(\e_{t_{i+1}}\right)_\sharp \eta\right) \\
&\overset{\eqref{eq:Winf_admissible}}{\le}  \Lip(\eta).
\end{split}
\]
Finally, to prove (iii),  we estimate
\begin{align*}
\limsup_{n\uparrow\infty}\Comp(\pi^M_n) &\overset{\eqref{eq:comp infty proof}}{\le} \limsup_{n\uparrow\infty} e^{\frac{K^-}{12}\vee_{i=0}^{M-1} W_\infty^2(\rho_{i,n}\mm_n,\rho_{i+1,n}\mm_n)}\bigvee_{i=0}^M\|\rho_{i,n}\|_{L^\infty(\mm_n) } \\
&\overset{\eqref{eq:limsdens_i_inf},\eqref{eq:stima infinito}}{\le}  e^{\frac{K^-}{12} \vee_{i=0}^{M-1}  W_\infty\left((\e_{t_{i}})_\sharp \eta,(\e_{t_{i+1}})_\sharp \eta\right) ^2} \Comp(\eta) \overset{\eqref{eq:Winf_admissible}}{\le}  e^{\frac{K^-}{12}\frac{\Lip^2(\eta)}{M^2}} \Comp(\eta).
\end{align*}
Taking now $M$ to infinity, (iii) follows. Property (iv) reads just as \eqref{eq:Winftyconvergence_i_inf} for $i=\{0,M\}$.

We conclude the proof by discussing how to modify the construction for $A\subset \Z$ open. By applying Lemma \ref{lemma_approxWinf} with $\eps<\frac{1}{2}{\rm dist}([\eta],A^c)$ and $B=[\eta]$ when we first introduce the $\rho_{i,n}$, $i=0,\ldots,M$ and $n\in \N$, we may assume that 
$${\rm dist}({\rm supp}(\rho_{i,n}),A^c)>\frac{1}{2}{\rm dist}([\eta],A^c).$$
Recalling \eqref{eq:stimaWinf} and \eqref{eq:Winf_admissible}, we can find $M_0$, $n_0$ s.t.
\[\Lip(\eta_{i,n})=W_\infty(\rho_{i,n} \mm_n, \rho_{i+1,n} \mm_n) \le W_\infty\left((\e_{t_{i}})_\sharp \eta,(\e_{t_{i+1}})_\sharp \eta\right)+2\eps_n < \frac 14{\rm dist}([\eta],A^c)  \]
for all $M \ge M_0$ and $n \ge n_0$, where $\eps_n \to 0$ is the same as in \eqref{eq:stimaWinf} (note that $\eps_n$ depends only on $\X_n$, $\X_\infty$ and $[\eta]$, $A$). Thus we finally infer that ${\rm dist}([\pi_n ^M],A^c)>\frac{1}{4}{\rm dist}([\eta],A^c)>0$ for $n$, $M$ large enough.

\end{proof}
\section{Mosco-convergence results}
In this part, we prove our main Mosco-convergence results, namely Theorem \ref{thm:Mosco in CDKN} and Theorem \ref{thm:Mosco in MCP}. 
\subsection{The infinite-dimensional setting}
We start with a result dealing with more generally infinite-dimensional spaces. The conclusion (i) in Theorem \ref{thm:Mosco in CDKN} will directly follow from it.
\begin{theorem}\label{thm:Mosco in CDKinf}
Let $p,q \in (1,\infty)$ be two fixed H\"older conjugate exponents. Let $\Xdmxn$  be a pointed metric measure space satisfying strong ${\sf  CD}_q(K,\infty)$ for some $K\in\R$. Assume that $\X_n \overset{pmG}{\rightarrow} \X_\infty$ for some $\Xdmxinf$.

If $f_n\in L^p(\mm_n)$ converges $L^p$-weak to $f_\infty \in L^p(\mm_\infty)$, then it holds
\[
 \rmCh_p(f_\infty)\le \liminf_{n\to\infty}\rmCh_p(f_n).
\]
If $K\ge0$, the same conclusion holds if $\Xdmxn$ are only assumed $\CD_q(K,\infty)$ spaces.
\end{theorem}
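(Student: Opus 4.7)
The plan is to combine the Lagrangian characterization of $W^{1,p}$ from Proposition \ref{prop:Sobolevintegrated} (together with the boundedly-supported reduction of Remark \ref{rmk:planboundedW1p}) with the polygonal interpolation built in Proposition \ref{prop:q_polygonal}, or the simpler one-shot interpolation of Proposition \ref{prop:q_polygonal K>0} when $K\ge 0$. Without loss of generality I assume $\liminf_{n\to\infty}\rmCh_p(f_n)<\infty$, pass to a subsequence realizing the liminf, and set $L\coloneqq \liminf_{n\to\infty}\rmCh_p(f_n)^{1/p}$. By Proposition \ref{prop:Sobolevintegrated} and Remark \ref{rmk:planboundedW1p}, in order to conclude at once that $f_\infty \in W^{1,p}(\X_\infty)$ and that $\rmCh_p(f_\infty)\le L^p$, it is enough to prove that for every $q$-test plan $\eta$ on $\X_\infty$ with bounded support one has
\[
\int f_\infty(\gamma_1)-f_\infty(\gamma_0)\,\d\eta \;\le\; \Comp(\eta)^{1/p}\,\rmKe_q(\eta)^{1/q}\,L.
\]

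Fix such an $\eta$ and apply Proposition \ref{prop:q_polygonal} (respectively Proposition \ref{prop:q_polygonal K>0} when $K\ge 0$, where one may avoid the $M$-refinement and take $\xi_n,\zeta_n$ to be the marginal densities of the constructed plan). This produces $q$-test plans $\pi_n^M \in \PP(C([0,1],\X_n))$ and boundedly supported probability densities $\xi_n,\zeta_n\in L^\infty(\mm_n)$ such that $\xi_n,\zeta_n$ converge $L^q$-strong to the endpoint densities of $\eta$, asymptotically approximate in $L^q$ the marginals of $\pi_n^M$, and satisfy the compression and kinetic-energy control $\limsup_M\limsup_n \Comp(\pi_n^M)\le \Comp(\eta)$ and $\limsup_M\limsup_n \rmKe_q(\pi_n^M)\le \rmKe_q(\eta)$. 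Since eventually $f_n \in W^{1,p}(\X_n)$, testing Proposition \ref{prop:Sobolevintegrated} for $f_n$ against $\pi_n^M$ yields
\[
\int f_n(\gamma_1)-f_n(\gamma_0)\,\d\pi_n^M \;\le\; \Comp(\pi_n^M)^{1/p}\,\rmKe_q(\pi_n^M)^{1/q}\,\rmCh_p(f_n)^{1/p}.
\]

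Now I pass to the limit first in $n$ and then in $M$. I rewrite the left-hand side as
\[
\int f_n\,\zeta_n\,\d\mm_n - \int f_n\,\xi_n\,\d\mm_n + R_n^M,
\]
where the remainder $R_n^M$ is controlled, via H\"older, by $\|f_n\|_{L^p(\mm_n)}$ times the $L^q$-distance between $\xi_n,\zeta_n$ and the marginal densities of $\pi_n^M$. Property (iv) of Proposition \ref{prop:q_polygonal}, combined with the uniform $L^p$-bound on $f_n$ inherited from $L^p$-weak convergence, gives $\limsup_{M\uparrow\infty}\limsup_{n\uparrow\infty}|R_n^M|=0$. The $L^p$-weak convergence of $f_n$ to $f_\infty$, coupled with the $L^q$-strong convergence of $\xi_n,\zeta_n$ to the endpoint densities of $\eta$, yields via \eqref{eq:pq coupling} that $\int f_n\zeta_n\,\d\mm_n-\int f_n\xi_n\,\d\mm_n \to \int f_\infty(\gamma_1)-f_\infty(\gamma_0)\,\d\eta$. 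Combining these convergences with the compression and kinetic-energy limsup bounds from Proposition \ref{prop:q_polygonal} produces exactly the target inequality, and the proof is concluded.

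The main obstacle is the asymptotic reconciliation between the marginals of the optimal polygonal plan $\pi_n^M$ and reasonable $L^q$-strong approximants of the endpoint densities of $\eta$: a priori these two families are unrelated, and it is only the iterated limit in $M$ and then in $n$, controlled quantitatively in (iv) of Proposition \ref{prop:q_polygonal}, that brings them together. A secondary subtlety is the passage from $K\ge 0$ to $K<0$: in the latter regime the exponential factor $e^{K^-D^2/12}$ appearing in the one-shot compression estimate of Theorem \ref{thm:rajalaCDqKN} cannot be controlled by the single Wasserstein distance between the endpoints, which is precisely why the polygonal refinement that shrinks the per-step length along the grid is necessary.
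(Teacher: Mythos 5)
Your proof is correct and follows essentially the same route as the paper: reduce via Proposition~\ref{prop:Sobolevintegrated} and Remark~\ref{rmk:planboundedW1p} to testing against boundedly supported $q$-test plans, apply the polygonal interpolation of Proposition~\ref{prop:q_polygonal} (or the single interpolation of Proposition~\ref{prop:q_polygonal K>0} when $K\ge 0$), and pass to the limit using \eqref{eq:pq coupling}. The only cosmetic difference is bookkeeping: you take iterated limits (first $n$, then $M$) and control the mismatch between $\xi_n,\zeta_n$ and the marginals of $\pi_n^M$ via an explicit H\"older-bounded remainder, whereas the paper extracts a diagonal subsequence $(M_k,n_k)$ along which the compression, kinetic energy, and endpoint-density convergences all hold simultaneously; the two bookkeeping schemes are equivalent and rest on the same content of statement (iv) of Proposition~\ref{prop:q_polygonal}.
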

\begin{proof}
If $\liminf_{n\uparrow\infty}\rmCh_p(f_n)=\infty$ there is nothing to prove. So, let us assume without loss of generality that $C\coloneqq \liminf_{n\uparrow\infty} \rmCh_p^{1/p}(f_n)<\infty$ and, up to pass to a not relabeled subsequence, that $C= \lim_{n\uparrow\infty} \rmCh_p^{1/p}(f_n)$ and $f_n \in W^{1,p}(\X_n)$ for all $n\in\N$. In view of the equivalence formulation given by Proposition \ref{thm:Sobolevintegrated}, and recalling Remark \ref{rmk:planboundedW1p}, the proof will be concluded if we show
\begin{equation}
\int f_\infty(\gamma_1)-f_\infty(\gamma_0)\,\d\eta \le \Comp(\eta)^{1/p}\rmKe_q^{1/q}(\eta)C,\label{eq:claim}
\end{equation}
for all $q$-test plans $\eta \in \PP(C([0,1],\X_\infty))$ with bounded support. Indeed, this implies the sought conclusion, i.e.\ $f_\infty \in W^{1,p}(\X_\infty)$ and $\rmCh_p(f_\infty)\le C^p.$

Let us then fix a $q$-test plan $\eta \in \PP(C([0,1],\X_\infty))$ with bounded support and consider by Proposition \ref{prop:q_polygonal} the $M$-polygonal geodesic $\pi^M_{n} \in \PP(C([0,1],\X_{n}))$ for all $M\ge M_0, n\ge n_0(M)$ satisfying all the listed properties. We claim that there are sequences $M_k\uparrow\infty$, and $n_k\ge n_0(M_k)\uparrow\infty$ so that
\begin{itemize}
    \item[{\rm (a)}] $\rmKe_q(\pi_{n_k}^{M_k}) \le \rmKe_q(\eta) +\frac{2}{k}$ for all $k\in\N$; %and both limits exists;
    \item[{\rm (b)}] $\Comp(\pi_{n_k}^{M_k}) \le \Comp(\eta) +\frac{2}{k}$ for all $k\in\N$;
    \item[{\rm (c)}]it holds as $k\uparrow\infty $ that
    \[
       \frac{\d (\e_0)_\sharp \pi_{n_k}^{M_k} }{\d \mm_{n_k}} \to \frac{\d (\e_0)_\sharp \eta }{\d \mm_{\infty}},\qquad \frac{\d (\e_
       1)_\sharp \pi_{n_k}^{M_k} }{\d \mm_{n_k}}\to \frac{\d (\e_1)_\sharp \eta }{\d \mm_{\infty}}, \qquad \text{in $L^q$-strong}.
    \]
\end{itemize}
Indeed, by definition of upper limit, and with the same notation as in Proposition \ref{prop:q_polygonal}, we have that for each $k\in \N$ there exists $M_k>M_0$ so that
\begin{align*}
    &\limsup_{n\uparrow\infty} \rmKe_q(\pi_{n}^{M_k}) \le \rmKe_q(\eta) + \frac{1}{k},& &\limsup_{n\uparrow\infty} \Comp(\pi_{n}^{M_k}) \le \Comp(\eta) + \frac{1}{k},\\
    &\limsup_{n\uparrow\infty}\left\| \xi_n - \frac{\d (\e_0)_\sharp \pi^{M_k}_{n}}{\d \mm_n}\right\|_{L^q(\mm_n)} \le \frac{1}{k}, &   &\limsup_{n\uparrow\infty}\left\| \zeta_n - \frac{\d (\e_1)_\sharp \pi^{M_k}_{n}}{\d \mm_n} \right\|_{L^q(\mm_n)}\le \frac 1k.
\end{align*}
By an analogous argument, for each $k\in\N$ there exists $n_k\ge n_0(M_k)$ so that (a),(b) hold and  
\[
\left\| \xi_{n_k} - \frac{\d (\e_0)_\sharp \pi^{M_k}_{n_k}}{\d \mm_{n_k}}\right\|_{L^q(\mm_{n_k})} \le \frac{2}{k},\qquad \left\| \zeta_{n_k} - \frac{\d (\e_1)_\sharp \pi^{M_k}_{n_k}}{\d \mm_{n_k}} \right\|_{L^q(\mm_{n_k})}\le \frac 2k.
\]
Recalling that $\xi_n,\zeta_n$ were converging respectively to $\frac{\d (\e_0)_\sharp \eta }{\d \mm_{\infty}},\frac{\d (\e_1)_\sharp \eta }{\d \mm_{\infty}}$ in $L^q$-strong, we also deduce (c).

We are now ready to conclude the proof. For all $k\in\N$, using Theorem \ref{thm:Sobolevintegrated} we have
\[
\int f_{n_k}(\gamma_1)-f_{n_k}(\gamma_0)\,\d\pi_{n_k}^{M_k} \le \Comp\left(\pi_{n_k}^{M_k} \right)^{1/p}\rmKe_q^{1/q}\left(\pi_{n_k}^{M_k}\right)\rmCh_p^{1/p}(f_{n_k}).
\]
By property (c) and using the convergence of coupling \eqref{eq:pq coupling}, we have 
\begin{equation}
\int f_{n_k}(\gamma_1)-f_{n_k}(\gamma_0)\,\d\pi_{n_k}^{M_k} = 
\int f_{n_k}\,\d (\e_1)_\sharp \pi^{M_k}_{n_k} - \int f_{n_k}\,\d (\e_0)_\sharp \pi_{n_k}^{M_k} \to \int f_\infty(\gamma_1)-f_\infty(\gamma_0)\,\d\eta.
\label{eq:endpoint coupling}
\end{equation}
All in all, using the properties (a),(b), we have
\[
\begin{aligned}
\int f_\infty(\gamma_1)-f_\infty(\gamma_0)\,\d\eta  &=\lim_{k\uparrow\infty} \int f_{n_k}(\gamma_1)-f_{n_k}(\gamma_0)\,\d\pi_{n_k}^{M_k} \\
&\le \limsup_{k\uparrow\infty}
\Comp\left(\pi_{n_k}^{M_k} \right)^{1/p}\rmKe_q^{1/q}\left(\pi_{n_k}^{M_k}\right)\rmCh_p^{1/p}(f_{n_k})\\
&\le \Comp(\eta)^{1/p}\rmKe^{1/q}_q(\eta) \limsup_{k\uparrow\infty} \rmCh_p^{1/p}(f_{n_k}).
\end{aligned}
\]
By the choice of the sequence $(n_k)$, we have $\limsup_{k\to\infty} \rmCh_p^{1/p}(f_{n_k}) = C$ and this proves \eqref{eq:claim}.

Finally, if $K\ge0$, the above argument can be repeated on ${\sf CD}_q(0,\infty)$ spaces with a single interpolation (rather than a polygonal geodesic) by appealing instead to Proposition \ref{prop:q_polygonal K>0}. 
\end{proof}
 Next, we revisit within our framework a lower semicontinuity result on open sets originally obtained for $p=2$ in \cite[Lemma 5.8]{AmbrosioHonda17} in the ${\sf RCD}$ context. The proof is a slight modification of the above arguments, hence we only comment on the main differences.
\begin{proposition}\label{prop:liminf on open}
Let $p,q \in (1,\infty)$ be two fixed H\"older conjugate exponents. Let $\Xdmxn$  be a pointed metric measure space satisfying strong ${\sf  CD}_q(K,\infty)$ for some $K\in\R$. Assume that $\X_n \overset{pmG}{\rightarrow} \X_\infty$ for some $\Xdmxinf$, and let $A\subset \Z$ be any open set, $(\Z,\sfd)$ being the realization of the convergence.

If $f_n\in W^{1,p}(\mm_n)$ converges $L^p$-weak to $f_\infty \in W^{1,p}(\mm_\infty)$, then it holds
\[
     \int_A|Df_\infty|_p^p\,\d \mm_\infty \le \liminf_{n\to\infty}\int_A|Df_n|_p^p\,\d\mm_n.
\]
\end{proposition}
\begin{proof}
    Up to passing to a subsequence, we can assume that the liminf is a limit. Also, being the conclusion monotone for increasing open sets $A$, it is enough to prove the conclusion for $A$ bounded and open. Moreover, by Corollary \ref{cor:identification}, we can alternatively show that
   \[
        C_{f_\infty}(A) \le \lim_{n\to \infty}C_{f_n}(A) \eqcolon C.
   \]
   To this aim, the proof will be concluded if we show
    \begin{equation}
        \int f_\infty(\gamma_1)-f_\infty(\gamma_0)\,\d\eta \le \Comp(\eta)^{1/p}\rmKe_q^{1/q}(\eta)C,\label{eq:claim open}
    \end{equation}
    for all $q$-test plans $\eta \in \PP(C([0,1],\X_\infty))$ satisfying ${\rm dist}([\eta],A^c)>0$ (cf. Corollary \ref{cor:identification} again). 

    Having reduced to the case where $A$ is bounded, we can argue exactly as done in the proof of Theorem \ref{thm:Mosco in CDKinf}, noticing that in the application of Proposition \ref{prop:q_polygonal} we can further increase $M_0,n_0(M)$ so that ${\rm dist}([\pi_n^M],A^c)>0$ for all $M\ge M_0,n\ge n_0$. Thanks to this fact, we can select a joint sequence in $k\in\N$, and by Corollary \ref{cor:identification} write
    \[
        \int f_{n_k}(\gamma_1)-f_{n_k}(\gamma_0)\,\d\pi_{n_k}^{M_k} \le \Comp\left(\pi_{n_k}^{M_k} \right)^{1/p}\rmKe_q^{1/q}\left(\pi_{n_k}^{M_k}\right) C_{f_{n_k}}(A).
    \]
    From here, the conclusion of the proof follows by sending $k\to \infty$ by the very same arguments at the end of the proof in Theorem \ref{thm:Mosco in CDKinf}.
\end{proof}

\subsection{Main results for ${\sf CD}(K,N)$ spaces}
\begin{proof}[Proof of (i) in Theorem \ref{thm:Mosco in CDKN}]
    Notice that, under the present assumption, Theorem \ref{thm:CDq independent} applies. Hence, the sequence $\X_n$ satisfies all the requirements of Theorem \ref{thm:Mosco in CDKinf} for any couple of H\"older exponent $p,q \in (1,\infty)$. The conclusion follows.
\end{proof}
\begin{proof}[Proof of (ii) in Theorem \ref{thm:Mosco in CDKN}]
If $ \liminf_{n\uparrow\infty} |\dD f_n|(\X_n)=\infty$, there is nothing to prove. Thus, we can suppose that $C\coloneqq \liminf_{n\uparrow\infty} |\dD f_n|(\X_n)<\infty$ and, up to pass to a not relabeled subsequence, that $C = \lim_{n\uparrow\infty} |\dD f_n|(\X_n)$ and $f_n \in BV(\X_n)$ for all $n\in\N$.

By the equivalent characterization of BV-functions given by Proposition \ref{thm:BVplans}, it is enough to show
\begin{equation}
\int f_\infty(\gamma_1)-f_\infty(\gamma_0)\,\d\eta \le \Comp(\eta)\Lip(\eta)C, \label{eq:claimBV}
\end{equation}
for every $\infty$-test plan $\eta \in \PP(C([0,1],\X_\infty))$ with bounded support (recall Remark \ref{rmk:BVplanbounded}).

Let us then fix $M\in\N$ and consider the sequence of polygonal geodesics $\infty$-test plans $\pi^M_n \in \PP(C([0,1],\X_n))$ given by Proposition \ref{prop:infty_polgeo}. By the equivalence result of Theorem \ref{thm:BVplans}, we can write
\[
\int f_n(\gamma_1)-f_n(\gamma_0)\,\d\pi_n^M \le \Comp(\pi_n^M)\Lip(\pi_n^M)|\dD f_n|(\X_n),
\]
for all $n\in\N$. Next, we claim that
\[
\begin{split}
\int f_n(\gamma_1)-f_n(\gamma_0)\,\d \pi^M_n \to   \int f_\infty(\gamma_1)-f_\infty(\gamma_0)\,\d \eta.
\end{split}
\]
Indeed, fix any $s>1$ as in \eqref{SobPoincare BV} and consider $s'$ its conjugate exponent. By iv) in Proposition \ref{prop:infty_polgeo}, for all $M\in\N$ $\frac{\d (\e_0)_\sharp \pi^M_n}{\d \mm_n},\frac{\d (\e_1)_\sharp \pi^M_n}{\d \mm_n} $ converge $L^{s'}$-strong respectively to $ \frac{\d (\e_0)_\sharp \eta}{\d \mm_\infty},\frac{\d (\e_1)_\sharp \eta}{\d \mm_\infty}$ as $n\uparrow\infty$. Now, recall that both $\pi^M_n, \eta$ are supported on curves contained in a bounded region, say a ball $B\subset \Z$. We can thus consider $\varphi \in \Lip_{bs}(Z)$ with $\varphi\ge 0$ and $\varphi \equiv 1$ on $B$, and rewrite
\[
    \int f_n(\gamma_1)-f_n(\gamma_0)\,\d \pi^M_n = \int \varphi f_n\, \d (\e_1)_\sharp \pi^M_n -\int \varphi f_n\,\d (\e_0)_\sharp \pi^M_n.
\]
By the Leibniz rule, we also have $\varphi f_n \in BV(\X_n)$ and, thanks to \eqref{SobPoincare BV}, we thus deduce
\[
    \sup_{n\in\N}\|\varphi f_n\|_{L^s(\mm_n)} <+\infty,
\]
using that $C<\infty$ and $\sup_n \|f_n\|_{L^1(\mm_n)} < \infty$. 
In particular, we have that $\varphi f_n$ converges $L^s$-weak to $\varphi f_\infty$. The claim now follows recalling \eqref{eq:pq coupling} and using again that $\varphi f_\infty \circ \e_i = f_\infty \circ \e_i$, for $i=0,1$.

All in all, from the properties (ii),(iii)  of $\pi^M_n$ given by in Proposition \ref{prop:infty_polgeo}, we can estimate
\begin{align*}
\int f_\infty(\gamma_1)-f_\infty(\gamma_0)\,\d \eta &= \lim_{n\uparrow\infty} \int f_n(\gamma_1)-f_n(\gamma_0)\,\d \pi^M_n \\
&\le \limsup_{n\uparrow\infty} \Comp(\pi_n^M)\Lip(\pi_n^M)|\dD f_n|(\X_n) \\
&\le e^{K^- \frac{\Lip^2(\eta)}{M^2}} \Comp(\eta)\Lip(\eta)\limsup_{n\uparrow \infty}|\dD f_n|(\X_n).
\end{align*}
Observe here that, if $K\ge 0$, then the proof is already concluded by choosing $M=1$. In the general case, taking now $M\uparrow \infty$ and recalling that $C=\lim_{n\uparrow \infty}|\dD f_n|(\X_n)$, the claim \eqref{eq:claimBV} follows. 
\end{proof}
\begin{remark}\label{rem:finlser}\rm
    It follows from \cite[Corollary 4.7]{Kell17_2} and \cite{Kell17} that every $N$-dimensional smooth Finsler manifold with Ricci curvature bounded below by $K$ is a $q$-essential nonbranching ${\sf CD}_q(K,N)$ space for all $q\in(1,\infty)$. In this smooth setting, these facts replace the use of Theorem \ref{thm:CDq independent}, thus guaranteeing the validity of all the conclusions in Theorem \ref{thm:Mosco in CDKN}.\fr
\end{remark} 
We next show the analogue of Proposition \ref{prop:liminf on open}, in the BV space under the same assumptions of the main result proved in this section. 
\begin{proposition}\label{prop:liming BV on open}
    Let $\Xdmxn$ with $\mm_n(\X_n)<\infty$ be $q$-essentially non-branching pointed metric measure spaces for all $q \in (1,\infty),n\in\N$. Suppose that $\X_n$ satisfies ${\sf  CD}(K,N)$ for some $K\in\R,N\in[1,\infty)$ and that $\X_n \overset{pmG}{\rightarrow} \X_\infty$ for some $\Xdmxinf$.  Let $A\subset \Z$ be any open set, $(\Z,\sfd)$ being the realization of the convergence.

    If $f_n\in BV(\X_n)$ converges $L^1$-weak to $f_\infty \in BV(\X_\infty)$, then it holds
    \[
         |\dD f_\infty|(A) \le \liminf_{n\to\infty}|\dD f_n|(A).
    \]
\end{proposition}
\begin{proof}
    The proof follows by the very same arguments already presented in the proof (ii) in Theorem \ref{thm:Mosco in CDKN} taking also into account the proof of Proposition \ref{prop:liminf on open}). Here, we rely instead on the identification result given by Theorem \ref{thm:BVplans}, and on the last conclusion of Proposition \ref{prop:infty_polgeo}.
\end{proof} 
\subsection{Main result for ${\sf MCP}(K,N)$ spaces}\label{sec:mcp}
In this part, we push our analysis in a more general setting of essentially non-branching spaces satisfying the measure contraction property to prove our main result Theorem \ref{thm:Mosco in MCP}.

We start by developing compression estimates for Wasserstein geodesics and build suitable polygonal interpolations also in the ${\sf MCP}(K,N)$ class. We follow our previous analysis closely, limiting the details for brevity reasons. For $K\le 0$ and $N \in [1,\infty)$, consider the function $  (0,\infty)\ni r\mapsto \sfC_{K,N}[r] \in (0,\infty)$ as defined by
\begin{align*} 
&\sfC_{K,N}[r] \coloneqq \left(\sup_{\theta \in (0,r)}\sup_{t\in(0,1)} \frac{t}{\tau^{(t)}_{K,N}(\theta)}\right)^N, & &\text{if }K<0, \\
&\sfC_{0,N}(r) \equiv 1,& &\text{if }K=0.
\end{align*}
Notice that its definition depends only on the choice of $K,N$ and $\sfC_{K,N}[r]$ is non-decreasing and so that $\sfC_{K,N}[r]\downarrow 1$ as $r\downarrow 0$. 
\begin{proposition}\label{prop:q compression MCP}
    Let $\Xdm$ be a $q$-essentially non-branching metric measure space for all $q\in(1,\infty)$ satisfying ${\sf MCP}(K,N)$ for some $K \in \R, N \in [1,\infty)$. Then, for every boundedly supported probability densities $\rho_0,\rho_1\in L^\infty(\mm)$, there is a unique $\pi \in \OptGeo_q(\rho_0\mm,\rho_1\mm)$ so that
    \[
        \Comp(\pi) \le 2^N \sfC_{K^-,N}\big[\Lip(\pi)\big] \|\rho_0\|_{L^\infty(\mm)}\vee \|\rho_1\|_{L^\infty(\mm)}, \qquad \forall t\in[0,1].
    \]
    In particular, $\pi$ is a $q$-test plan.
\end{proposition}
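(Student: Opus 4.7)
The proof follows the same pattern as Theorem \ref{thm:rajalaCDqKN} and Proposition \ref{prop:good infty plan}, but with the ${\sf CD}$-based entropy-convexity estimates replaced by the pointwise MCP density bounds available in the essentially non-branching class.

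Since ${\sf MCP}(K,N)$ is independent of the transport exponent, I fix $q\in(1,\infty)$ and work with $W_q$-optimal dynamical plans throughout. Under $q$-essentially non-branching ${\sf MCP}(K,N)$, the combined works of Cavalletti--Mondino \cite{CavMon17} and Kell \cite{Kell17} supply the two structural facts I need for boundedly supported absolutely continuous marginals $\rho_0\mm,\rho_1\mm$: the existence of a \emph{unique} $\pi\in\OptGeo_q(\rho_0\mm,\rho_1\mm)$, and the absolute continuity $(\e_t)_\sharp\pi\ll\mm$ for every $t\in[0,1]$, with densities denoted $\rho_t$.

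The analytic heart is the MCP interpolation density bound of \cite{CavMon17} (obtained by disintegrating $\pi$ along its geodesic rays and applying MCP to the induced conditional contractions; the factor $2^N$ reflects the intrinsic asymmetry of MCP, which provides contractions only toward a point, so that interpolation between two non-Dirac measures requires choosing a midpoint-type target on each half-interval, with a rescaling cost of $2^N$). In the form relevant here it reads: for $\pi$-a.e.\ $\gamma$ and every $t\in[0,1]$,
\[
\rho_t(\gamma_t)\le 2^N\Bigl(\tfrac{1-t}{\tau^{(1-t)}_{K^-,N}(\sfd(\gamma_0,\gamma_1))}\Bigr)^{\!N}\rho_0(\gamma_0)\quad\text{and}\quad \rho_t(\gamma_t)\le 2^N\Bigl(\tfrac{t}{\tau^{(t)}_{K^-,N}(\sfd(\gamma_0,\gamma_1))}\Bigr)^{\!N}\rho_1(\gamma_1).
\]
Splitting $[0,1]=[0,\tfrac12]\cup[\tfrac12,1]$ and using the first bound on the first half (where $1-t\in[\tfrac12,1]$) and the second bound on the second half (where $t\in[\tfrac12,1]$), the very definition of $\sfC_{K^-,N}[\Lip(\pi)]$ immediately controls the right-hand sides uniformly. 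Since $\sfd(\gamma_0,\gamma_1)\le\Lip(\pi)$ for $\pi$-a.e.\ $\gamma$, passing from the $\pi$-a.e.\ bound on $\rho_t(\gamma_t)$ to an $\mm$-a.e.\ bound on $\rho_t$ yields the desired inequality $\Comp(\pi)\le 2^N\sfC_{K^-,N}[\Lip(\pi)]\bigl(\|\rho_0\|_{L^\infty(\mm)}\vee\|\rho_1\|_{L^\infty(\mm)}\bigr)$. The remaining claim that $\pi$ is a $q$-test plan is then automatic, since $\Lip(\pi)<\infty$ by boundedness of the supports of $\rho_0,\rho_1$.

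The main delicate point is justifying the MCP interpolation estimate in the form used above, with the correct $2^N$-constant and the $\tau^{(\cdot)}_{K^-,N}$ factor tailored to $\sfC_{K^-,N}$. Existence, uniqueness, and absolute continuity along the geodesic are by now standard under essentially non-branching MCP, and the transport-exponent independence of MCP lets me import results stated for $q=2$ to arbitrary $q\in(1,\infty)$; the bookkeeping of constants, however, must be done carefully so that both halves of the time interval are controlled by a single curvature-dependent quantity $\sfC_{K^-,N}[\Lip(\pi)]$.
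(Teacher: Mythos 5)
Your strategy coincides with the paper's: invoke uniqueness of the $q$-optimal geodesic plan in the essentially non-branching $\mcp$ class (the paper cites \cite[Theorem~5.8]{Kell17} together with qualitative non-degeneracy from \cite{CavallettiHuesmann15}; you cite Kell and Cavalletti--Mondino, which is equivalent), record a curve-wise $\mcp$-interpolation density bound involving the $\tau$-coefficients, and absorb it into $\sfC_{K^-,N}[\Lip(\pi)]$ via monotonicity of $r\mapsto\sfC_{K^-,N}[r]$ and the fact that $\sfd(\gamma_0,\gamma_1)\le\Lip(\pi)$ for $\pi$-a.e.\ $\gamma$. Where the paper simply quotes the ready-made estimate from \cite[Remark~3.8]{NobiliPasqualettoSchultz22} and \cite[Proposition~9.1]{CavallettiMilman21}, you attempt to re-derive it from a two-sided contraction bound; that is an equally valid route, and your final inequality is correct.

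There is, however, a genuine inaccuracy in how you state the intermediate estimate. You claim that for $\pi$-a.e.\ $\gamma$ and \emph{every} $t\in[0,1]$ both
\[
\rho_t(\gamma_t)\le 2^N\Bigl(\tfrac{1-t}{\tau^{(1-t)}_{K^-,N}(\sfd(\gamma_0,\gamma_1))}\Bigr)^{N}\rho_0(\gamma_0)
\quad\text{and}\quad
\rho_t(\gamma_t)\le 2^N\Bigl(\tfrac{t}{\tau^{(t)}_{K^-,N}(\sfd(\gamma_0,\gamma_1))}\Bigr)^{N}\rho_1(\gamma_1)
\]
hold. This is false, even on $\R^N$ with $K=0$ (a $\CD(0,N)$, hence $\mcp(0,N)$, space): there $\tau^{(s)}_{0,N}\equiv s$, so the first inequality reads $\rho_t(\gamma_t)\le 2^N\rho_0(\gamma_0)$; taking $\rho_0$ the normalized indicator of $B_R(0)$ and $\rho_1$ that of $B_\eps(0)$ with $\eps\ll R$, one has $\rho_t(\gamma_t)=(1-t+t\eps/R)^{-N}\rho_0(\gamma_0)$, which exceeds $2^N\rho_0(\gamma_0)$ as soon as $1-t+t\eps/R<1/2$, i.e.\ for $t$ close to $1$. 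The correct statement is that the first (contraction-toward-$\rho_0$) bound holds only for $t\in[0,1/2]$ and the second only for $t\in[1/2,1]$. Since in your subsequent splitting you apply each bound precisely on its admissible half-interval, the chain of estimates leading to $\Comp(\pi)\le 2^N\sfC_{K^-,N}[\Lip(\pi)]\,\|\rho_0\|_{L^\infty(\mm)}\vee\|\rho_1\|_{L^\infty(\mm)}$ is unaffected; you should simply restate the intermediate bound with the time-restriction made explicit (or quote the combined estimate directly, as the paper does).
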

\begin{proof}
We can replace $K$ by $K^-$ in this proof. The fact that  $q$-optimal plans are unique for absolutely continuous marginals is well known (see \cite[Theorem 5.8]{Kell17}, recalling that $\mm$ is qualitatively non-degenerate (\cite{CavallettiHuesmann15}) and $q$-essentially non-branching assumption is in place as an assumption). We observe that the unique $\pi \in \OptGeo_q(\rho_0\mm,\rho_1\mm)$ satisfies
    \[
        \rho_t(\gamma_t) \le 2^N \sfC_{K^-,N}\big[\sfd(\gamma_0,\gamma_1)\big] \, \|\rho_0\|_{L^\infty(\mm)}\vee \|\rho_1\|_{L^\infty(\mm)},\qquad \pi\text{-a.e.\ }\gamma \text{ and }\forall t\in[0,1],
    \]
where we denoted $(\e_t)_\sharp \pi \coloneqq \rho_t\mm$. This would conclude the proof showing also that $\pi$ is a $q$-test plan (as it is concentrated on equi-Lipschitz geodesics). The above estimate was already noticed in \cite[Remark 3.8]{NobiliPasqualettoSchultz22} and can be proved by rearranging as in \cite[Proposition 9.1]{CavallettiMilman21} (working with $q$-essentially non-branching) and thanks to the fact that $\mcp$ is independent of $q$.
\end{proof}
As a by-product, we have the existence of many $W_\infty$-geodesics with effective density estimates.
\begin{proposition}\label{prop:good infty plan mcp}
Let us assume that, for a given sequence  $q_k\uparrow \infty$,  $\Xdm$ is a $q_k$ essentially non-branching metric measure space satisfying ${\sf MCP}(K,N)$ for some $K \in \R, N \in [1,\infty)$ and for all $k\in\N$. Then, for every boundedly supported probability densities $\rho_0,\rho_1\in L^\infty(\mm)$, there is $\pi \in \OptGeo_\infty(\rho_0\mm,\rho_1\mm)$ so that
    \[
        \Comp(\pi) \le 2^N\sfC_{K^-,N}\big[W_\infty(\rho_0\mm,\rho_1\mm)\big]\, \|\rho_0\|_{L^\infty(\mm)}\vee \|\rho_1\|_{L^\infty(\mm)}, \qquad \text{for all }t\in[0,1].
    \]
    In particular, $\pi$ is an $\infty$-test plan.
\end{proposition}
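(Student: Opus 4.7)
The plan is to closely mirror the proof of Proposition \ref{prop:good infty plan}, substituting the strong ${\sf CD}$ compression bound with the ${\sf MCP}$-based bound from Proposition \ref{prop:q compression MCP}. Replacing $K$ by $K^{-}$ throughout (the case $K\ge 0$ being contained in the general statement), the strategy is: for each $k$ produce a $q_k$-optimal dynamical plan with controlled kinetic energy, restrict it to curves of near-maximal speed to gain a Lipschitz bound, invoke Proposition \ref{prop:q compression MCP} to get uniform compression, and finally pass to a weak limit through Proposition \ref{prop:infty_tightness}.

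More concretely, for each $k$ let $\eta_k$ be the unique element of ${\rm OptGeo}_{q_k}(\mu_0,\mu_1)$ (uniqueness follows, as in the proof of Proposition \ref{prop:q compression MCP}, from the $q_k$-essentially non-branching assumption combined with ${\sf MCP}(K,N)$ via \cite{Kell17}, since the $\mcp$ condition is independent of $q$). Setting $\eps_k\coloneqq W_\infty(\mu_0,\mu_1)(q_k^{1/q_k}-1)$, define
\[
\Gamma_k\coloneqq\Bigl\{\gamma\in C([0,1],\X)\colon \int_0^1|\dot\gamma_t|^{q_k}\,\d t\le \big(W_\infty(\mu_0,\mu_1)+\eps_k\big)^{q_k}\Bigr\},\qquad \pi_k\coloneqq\frac{\eta_k\res{\Gamma_k}}{\eta_k(\Gamma_k)}.
\]
By Chebyshev's inequality and the bound $W_{q_k}(\mu_0,\mu_1)\le W_\infty(\mu_0,\mu_1)$, we have $\eta_k(\Gamma_k^c)\le 1/q_k\to 0$, so $\pi_k$ is well-defined for all $k$ large enough and satisfies ${\rm Lip}(\pi_k)\le W_\infty(\mu_0,\mu_1)+\eps_k$. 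By Lemma \ref{lem:subplan is optimal}, $\pi_k\in{\rm OptGeo}_{q_k}((\e_0)_\sharp\pi_k,(\e_1)_\sharp\pi_k)$, and its endpoints are absolutely continuous with density $L^\infty$-bound at most $\eta_k(\Gamma_k)^{-1}(\|\rho_0\|_{L^\infty(\mm)}\vee\|\rho_1\|_{L^\infty(\mm)})$. Therefore, invoking the uniqueness of $q_k$-optimal plans between absolutely continuous marginals in this setting, Proposition \ref{prop:q compression MCP} applies to $\pi_k$ and gives
\[
{\rm Comp}(\pi_k)\le 2^N\,\sfC_{K^-,N}\big[W_\infty(\mu_0,\mu_1)+\eps_k\big]\,\eta_k(\Gamma_k)^{-1}\bigl(\|\rho_0\|_{L^\infty(\mm)}\vee\|\rho_1\|_{L^\infty(\mm)}\bigr).
\]

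Next, the family $(\pi_k)$ satisfies the hypotheses of Proposition \ref{prop:infty_tightness} (uniformly bounded ${\rm Lip}$ and ${\rm Comp}$, fixed zero-time marginal with bounded support), so we may extract a subsequence with $\pi_k\weakto \pi$ for some $\infty$-test plan $\pi$. Since $(\e_i)_\sharp\pi_k\weakto\mu_i$ by weak continuity of push-forward along the evaluation map, $\pi$ has the correct endpoints. Using \eqref{eq:Mosco Keq Lip}, the identity $\rmKe_{q_k}^{1/q_k}(\pi_k)\le\eta_k(\Gamma_k)^{-1/q_k}W_{q_k}(\mu_0,\mu_1)$, and $W_{q_k}\to W_\infty$ from \eqref{eq:lim_Wq}, we get ${\rm Lip}(\pi)\le W_\infty(\mu_0,\mu_1)$, hence $\pi\in{\rm OptGeo}_\infty(\mu_0,\mu_1)$ by \eqref{eq:dynOPinfty}. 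Finally, the lower semicontinuity of ${\rm Comp}$ under weak convergence together with $\eta_k(\Gamma_k)\to 1$ and the right-continuity of $r\mapsto\sfC_{K^-,N}[r]$ (which follows because $\theta\mapsto\tau^{(t)}_{K^-,N}(\theta)$ is continuous and monotone in $\theta$ on $[0,\infty)$, so the supremum in the definition of $\sfC_{K^-,N}$ varies continuously in the upper endpoint) yields the announced bound. The main (mild) obstacle is this last passage to the limit on $\sfC_{K^-,N}[W_\infty+\eps_k]$, which requires careful handling of the open-interval supremum in its definition; once right-continuity of $\sfC_{K^-,N}[\cdot]$ is verified, the rest is a direct adaptation of Proposition \ref{prop:good infty plan}.
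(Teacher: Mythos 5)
Your proof is correct and follows essentially the same route as the paper's: restrict the $q_k$-optimal plan to near-$W_\infty$-speed curves, apply the ${\sf MCP}$-compression estimate of Proposition \ref{prop:q compression MCP} to the restricted plan (which remains optimal between its own marginals), extract a weak limit via Proposition \ref{prop:infty_tightness}, and pass the bounds to the limit. The one place you go beyond the paper is in flagging the right-continuity of $r\mapsto\sfC_{K^-,N}[r]$ as the point that must be checked before sending $\eps_k\downarrow 0$; the paper dispatches the final limit passage by reference to the end of the proof of Proposition \ref{prop:good infty plan} (where the exponential prefactor is manifestly continuous) and leaves that continuity implicit. Your observation is accurate and closes this small gap: for $K<0$ the integrand $t/\tau^{(t)}_{K^-,N}(\theta)$ is increasing and continuous in $\theta$, so the supremum over $\theta\in(0,r)$ reduces to its value at $\theta=r$, and the remaining $\sup_{t\in(0,1)}$ is a max over a compactified interval of a continuous function, hence continuous in $r$.
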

\begin{proof}
Without loss of generality, we shall prove the statement with $K^-$. For brevity, we write $\mu_i\coloneqq \rho_i\mm$, for $i=0,1$. For each $k\in\N$, let us consider $\eta_k \in \OptGeo_{q_k}(\mu_0,\mu_1)$ given by Proposition \ref{prop:q compression MCP}. Define $\eps_k \coloneqq W_\infty(\mu_0,\mu_1)({q_k}^{1/q_k}-1)$ and set
    \[
    \Gamma_k\coloneqq \Big\{ \gamma \in C([0,1],\X) \colon \int_0^1 |\dot \gamma_t|^{q_k}\,\d t \le (W_\infty(\mu_0,\mu_1)+\eps_k)^{q_k}\Big\},\qquad \pi_k \coloneqq \frac{\eta_k\restr{\Gamma_k}}{\eta_k(\Gamma_k)},
    \]
whenever $\eta_k(\Gamma_k)>0$. By Chebyshev's, we have 
\[
 \pi_k(\Gamma^c_k)\le \frac{W^{q_k}_{q_k}(\mu_0,\mu_1)}{q_kW^{q_k}_{\infty}(\mu_0,\mu_1)} \le \frac1{q_k}\to 0,\qquad \text{ as }k\uparrow \infty,
\]
so that,  eventually, $ \pi_k$ is well-defined for all $k$ up to pass to a subsequence. In particular, it holds
\[
 (\e_i)_\sharp \pi_k  \le \frac{(\e_i)_\sharp \eta_k}{\eta(\Gamma_k)} 
  \le \frac 1{\eta(\Gamma_k)}\|\rho_0\|_{L^\infty(\mm)}\vee \|\rho_1\|_{L^\infty(\mm)}\mm ,\qquad \forall i=0,1.
\]
Being a restriction, $\pi_k$ is optimal with respect to its own marginals (cf. Lemma \ref{lem:subplan is optimal}) and, by Proposition \ref{prop:q compression MCP}, it then must hold
\begin{align*}
   \Comp(\pi_k)&\le \frac{2^N}{\eta(\Gamma_k)}\sfC_{K^-,N}\big[ \Lip(\pi_k)\big]\|\rho_0\|_{L^\infty(\mm)}\vee \|\rho_1\|_{L^\infty(\mm)} \\
   &\le \frac{2^N}{\eta(\Gamma_k)}\sfC_{K^-,N}\big[ W_\infty(\mu_0,\mu_1)+\eps_k
 \big]\|\rho_0\|_{L^\infty(\mm)}\vee \|\rho_1\|_{L^\infty(\mm)},
\end{align*}
having used that $\Lip(\pi_k) \le W_\infty(\mu_0,\mu_1)+\eps_k$ by construction and since $r\mapsto \sfC_{K^-,N}[r]$ is non-decreasing. Hence, we have $\sup_k \Comp(\pi_k)<\infty$, $\sup_k\Lip(\pi_k)<\infty$ and, therefore, we have that $(\pi_k)$ is a sequence of $\infty$-test plans satisfying all the assumptions of Proposition \ref{prop:infty_tightness}. Up to a further subsequence, $\pi_k \weakto  \pi$ for some $\infty$-test plan  $ \pi$. Finally, the verification that $\pi$ does the job goes exactly as in the last part of the proof of Proposition \ref{prop:good infty plan}.
\end{proof}
We are finally ready to prove Theorem \ref{thm:Mosco in MCP}. We shall only highlight the necessary modifications from the proof of Theorem \ref{thm:Mosco in CDKN}. We split the proof into two parts, one for each conclusion.
\begin{proof}[Proof of (i) of Theorem \ref{thm:Mosco in MCP}]
Let us fix $p,q\in(1,\infty)$ two arbitrary H\"older conjugate exponents. The argument is analogous to that of Theorem \ref{thm:Mosco in MCP} (in fact, Theorem \ref{thm:Mosco in CDKinf}) using the Lagrangian characterization of Sobolev functions given by Theorem \ref{thm:Sobolevintegrated}. Thus, here we only prove that, for a given $q$-test plan $\eta \in \PP(C([0,1],\X_\infty))$ with bounded support, the very same polygonal construction $\pi_{n}^M \in \PP(C([0,1],\X_n))$ deduced in Proposition \ref{prop:q_polygonal} can be performed with exactly the same properties (i),(ii),(iv) and (iii) replaced by
\begin{itemize}
    \item[(iii')] $\limsup_{M\uparrow\infty}\limsup_{n\uparrow \infty} \Comp(\pi^M_{n}) \le 2^N\Comp(\eta).$
\end{itemize}
As said, the existence of such a polygonal approximation will directly give the conclusion by repeating verbatim the proof of Theorem \ref{thm:Mosco in CDKinf}. 

So, arguing exactly as in Proposition \ref{prop:q_polygonal} we get (with the same notation) for $M\ge M_0$ and $n\ge n_0\coloneqq n_0(M)$,  optimal dynamical plans $ (\eta_{i,n})_{i=0}^{M-1}\subset \PP(C([0,1],\X_n))$ satisfying
\[
\Comp(\eta_{i,n})  \le 2^N\sfC_{K^-,N}\Big[  W_q(\tilde{\rho}_{i,n}\mm_n,\tilde{\rho}_{i+1,n}\mm_n )^{\frac{q-1}{2q}}\Big]  \|\rho_{i,n}\|_{L^\infty(\mm_n)}\vee \|\rho_{i+1,n}\|_{L^\infty(\mm_n)},
\]
for all $i=0,...,M-1$ and $n\ge n_0$, having used that $r \mapsto \sfC_{K^-,N}[r]$ is non-decreasing and the very same construction of $\eta_{i,n}$ as in Proposition \ref{prop:q_polygonal} (relying on Proposition \ref{prop:q compression MCP} that replace the role of Theorem \ref{thm:rajalaCDqKN}). Now, with a usual gluing argument, for all $M\ge M_0$ and $n\ge n_0$  we can build an $M$-polygonal geodesic $\pi^M_{n} \in \PP(C([0,1],\X_{n}))$ satisfying
\[
\Comp(\pi^M_{n_k} )  \le 2^N\sfC_{K^-,N}\left[  \bigvee_{i=0}^{M-1}W_q(\tilde{\rho}_{i,n}\mm_n,\tilde{\rho}_{i+1,n}\mm_n )^{\frac{q-1}{2q}}\right]  \bigvee_{i=0}^M\|\rho_{i,n}\|_{L^\infty(\mm_n)}.
\]
From the above, the property (iii') follows by the very same justifications of Proposition \ref{prop:q_polygonal}, taking into account that $\sfC_{K^-,N}[r]\downarrow 1$ as $r\down 0$.
\end{proof}

\begin{proof}[Proof of (ii) in Theorem \ref{thm:Mosco in MCP}]
    The argument is analogous to that of Theorem \ref{thm:Mosco in CDKN} using the Lagrangian characterization of BV-functions given by Theorem \ref{thm:BVplans}. 
    
Thus, here we only prove that, for a given $\infty$-test plan $\eta \in \PP(C([0,1],\X_\infty))$ with bounded support, the very same polygonal construction $\pi_n^M \in \PP(C([0,1],\X_n))$ deduced in Proposition \ref{prop:infty_polgeo} can be performed with exactly the same properties (i),(ii),(iv) and (iii) replaced by
\begin{itemize}
    \item[(iii')] $\limsup_{n\uparrow \infty} \Comp(\pi^M_n) \le 2^N\sfC_{K^-,N}\Big[\frac{\Lip(\eta)}{M}\Big]\Comp(\eta),$ \, for all $M\in\N$.
\end{itemize}
As said, the existence of such a polygonal approximation will directly give us the conclusion by repeating verbatim the proof of (ii) in Theorem \ref{thm:Mosco in CDKN}.

So, arguing exactly as in Proposition \ref{prop:infty_polgeo} and (with the same notation) for $M\ge M_0$ and $n\ge n_0\coloneqq n_0(M)$ we can invoke Proposition \ref{prop:good infty plan mcp} (replacing the role of Proposition \ref{prop:good infty plan}) to find plans $\eta_{i,n} \in \OptGeo_\infty(\rho_{i,n}\mm_n,\rho_{i+1,n}\mm_n)$ with bounded supports satisfying
\[
\Comp(\eta_{i,n}) \le 2^N\sfC_{K^-,N}\big[ W_{\infty}(\rho_{i,n}\mm_n,\rho_{i+1,n}\mm_n)\big] \|\rho_{i,n}\|_{L^\infty(\mm_n)} \vee \|\rho_{i+1,n}\|_{L^\infty(\mm_n)},
\]
for every $i=0,...,M_1$, having used that $r\mapsto  \sfC_{K^-,N}[r]$ is decreasing. Now, with a gluing argument, we get $(\pi^M_n)\subset \PP(C([0,1],\X_n))$ satisfying, by construction, 
\begin{equation}
   \Comp(\pi^M_n) \le 2^N \sfC_{K^-,N}\left[\bigvee_{i=0}^{M-1}   W_{\infty}(\rho_{i,n}\mm_n,\rho_{i+1,n}\mm_n)\right]\bigvee_{i=0}^M\|\rho_{i,n}\|_{L^\infty(\mm_n)}, \label{eq:comp polyg MCP}
\end{equation}
From the above, the property (iii') follows by the very same justifications of Proposition \ref{prop:infty_polgeo}, taking into account that $\sfC_{K^-,N}[r]\downarrow 1$ as $r\down 0$.
\end{proof}
\begin{remark}[Comparison with \cite{GigliNobili22}]\label{rem:polgeo vs old polgeo}
\rm 
Recall the following equivalent Lagrangian characterization of $W^{1,p}(\X)$: there is $G \in L^p(\mm)$ called $p$-weak upper gradient so that
\begin{equation}
\int f(\gamma_1)-f(\gamma_0)\,\d\eta \le \iint_0^1 G(\gamma_t)|\dot \gamma_t|\,\d t\d \eta,
\label{eq:sob class double integral}
\end{equation}
for all $q$-test plans $\eta \in \PP(C([0,1],\X)$, where $p,q\in(1,\infty)$ are H\"older conjugate. It turns out that $|Df|_p$ can be equivalently characterized as the minimal $L^p$-normed element of the $G$'s satisfying the above. This Lagrangian notion is not standard, but equivalent to those originally proposed in \cite{Shanmugalingam00} and later developed in \cite{AmbrosioGigliSavare11,AmbrosioGigliSavare11-3} (we refer to \cite{AmbrosioIkonenLucicPasqualetto24} for details and relevant references).

In \cite{GigliNobili22}, we also dealt with polygonal interpolations in \eqref{eq:sob class double integral} with the key advantage that the right-hand side is linear in $\eta$. In the varying setting $\Xdmxn$, this seems not suitable to pass to the limit under polygonal approximations and with the $p$-weak upper gradient $G_n=|Df_n|_p$ also varying. Notice, however, that the condition \eqref{eq:sob class double integral} is \emph{stronger} than the ``integrated'' version obtained by H\"older inequality
\[
\iint_0^1 |Df|_p(\gamma_t)|\dot \gamma_t|\,\d t\d \eta \overset{\eqref{eq:def_test_plan1}}{\le} \Comp(\eta)^{1/p}\rmKe^{1/q}_q(\eta)\rmCh^{1/p}_p(f).
\]
Fortunately, this type of control still detects the space $W^{1,p}(\X)$ as proved in Theorem \ref{thm:Sobolevintegrated} and completely decouples the duality of the $p$-Cheeger and $q$-Kinetic energies, provided one can deal quantitatively with compression estimates of Wasserstein geodesics under Ricci lower bounds. 
%The latter role is played by Theorem \ref{thm:rajalaCDqKN}.
\fr 
\end{remark}

\section{Applications to Neumann eigenvalues}
In this section, we prove our main result concerning the continuity of Neumann eigenvalues.
\begin{proof}[Proof of Theorem \ref{thm:neumann}]
    For every $f_\infty \in W^{1,p}(\X_\infty)$ with $\int f_\infty|f_\infty|^{p-2}\,\d \mm_\infty =0$, we can take a recovery sequence $f_n \in W^{1,p}(\X_n)$ that is $L^p$-strongly converging to $f_\infty$ satisfying $\limsup_{n\to\infty}{\rm Ch}_p(f_n) \le \rmCh_p(f_\infty)$ (see Theorem \ref{th:limsup}). By \cite[Lemma 9.2]{AmbrosioHonda17}, we have $\|f_n-a_n\|_{L^p(\mm_n)} \to \|f_\infty\|_{L^p(\mm_\infty)}$, where $a_n\in \R$ is the unique number such that $\int (f_n-a_n)|f_n-a_n|^{p-2} \, \d\mm_n=0$. It then holds
    \[
         \frac{\rmCh_p(f_\infty)}{\|f_\infty\|^p_{L^p(\mm_\infty)}} \ge \limsup_{n\to\infty}  \frac{\rmCh_p(f_n)}{\|f_n-a_n\|^p_{L^p(\mm_n)}} \ge \limsup_{n\to\infty} \lambda_p(\X_ n),
    \]
    and hence by arbitrariness of $f_\infty$ we obtain $\lambda_p(\X_\infty) \ge \limsup_n \lambda_p(\X_n)$. \\
    We need to show the converse inequality to conclude the proof. For every $\eps>0,n\in\N$, let $f_n \in W^{1,p}(\X_n)$ be such that
    \[
        \lambda_p(\X_n) \ge \frac{\rmCh_p(f_n)}{\|f_n\|^p_{L^p(\mm_n)}} -\eps,\qquad \int f_n|f_n|^{p-2}\,\d\mm_n=0.
    \]
    Set $\tilde f_n \coloneqq f_n/\|f_n\|_{L^p(\mm_n)} $ so that $\sup_{n\in\N} \|\tilde f_n\|_{W^{1,p}(\X_n)}<\infty$. Since $\X_n,\X_\infty$ are all uniformly bounded by assumptions, have uniform doubling and Poincar\'e constants (\cite{Sturm06I,Sturm06II,Rajala12-2}),  and since boundaries of metric balls are negligible thanks to Bishop-Gromov monotonicity, we can invoke the compactness theorem in \cite[Theorem 6.14]{Wu25} to deduce that, up to a non relabelled subsequence, we have that $\tilde f_n$ converges $L^p$-strong to some function $\tilde f_\infty \in W^{1,p}(\X_\infty)$ with $\|\tilde f_\infty\|_{L^p(\mm_\infty)}=1$. All in all, we deduce again by \cite[Lemma 9.2]{AmbrosioHonda17} that $\int \tilde f_\infty |\tilde f_\infty|^{p-2} \,\d\mm_\infty=0$ and finally that
    \[
       \eps +  \liminf_{n\to\infty} \lambda_p(\X_n) \ge  \liminf_{n\to\infty}\rmCh_p(\tilde f_n)  \ge \rmCh_p(\tilde f_\infty) \ge \lambda_p(\X_\infty),
    \]
    having used Theorem \ref{thm:Mosco in CDKN} in the second inequality. By arbitrariness of $\eps>0$, the proof is concluded.
\end{proof}

\medskip

\noindent\textbf{Acknowledgements}. F.N. and F.V. are members of INDAM-GNAMPA. F.N  acknowledges support by the European Union (ERC, ConFine, 101078057), the INdAM-GNAMPA Project ``Analisi e Gamma-convergenza per alcuni funzionali non locali'' CUP E53C25002010001\#, and the MIUR Excellence Department Project
awarded to the Department of Mathematics, University of Pisa, CUP I57G22000700001.
F.R and F.V. acknowledge support by the MIUR-PRIN 202244A7YL project ``Gradient Flows and Non-Smooth Geometric Structures with Applications to Optimization and Machine Learning". Part of this work was carried out while F.N. was a visitor at the The Erwin Schrödinger International Institute for Mathematics and Physics during the Thematic Program on Free Boundary Problems, Wien 2025. The warm hospitality and the stimulating atmosphere are gratefully acknowledged. F.N. thanks C. Brena and T. Rajala for useful comments, and N. Gigli for stimulating discussions regarding Remark \ref{rem:polgeo vs old polgeo}. The authors thank L. Ambrosio for his interest and the useful suggestions, and S. Honda for suggesting to investigate Theorem \ref{thm:neumann}.


\begin{thebibliography}{10}

\bibitem{ACCMcS20}
{\sc A.~Akdemir, A.~Colinet, R.~McCann, F.~Cavalletti, and F.~Santarcangelo}, {\em Independence of synthetic curvature dimension conditions on transport distance exponent}, Trans. Amer. Math. Soc., 374 (2021), pp.~5877--5923.

\bibitem{AmbICM}
{\sc L.~Ambrosio}, {\em Calculus, heat flow and curvature-dimension bounds in metric measure spaces}, in Proceedings of the {I}nternational {C}ongress of {M}athematicians---{R}io de {J}aneiro 2018. {V}ol. {I}. {P}lenary lectures, World Sci. Publ., Hackensack, NJ, 2018, pp.~301--340.

\bibitem{AmbrosioBrueSemola19}
{\sc L.~Ambrosio, E.~Bru\'{e}, and D.~Semola}, {\em Rigidity of the 1-{B}akry-\'{E}mery inequality and sets of finite perimeter in {RCD} spaces}, Geom. Funct. Anal., 29 (2019), pp.~949--1001.

\bibitem{AmbrosioBrueSemola24_Book}
{\sc L.~Ambrosio, E.~Bru{\'e}, and D.~Semola}, {\em Lectures on optimal transport}, vol.~169 of Unitext, Cham: Springer, 2nd edition~ed., 2024.

\bibitem{AmbrosioDiMarino14}
{\sc L.~Ambrosio and S.~Di~Marino}, {\em Equivalent definitions of {$BV$} space and of total variation on metric measure spaces}, J. Funct. Anal., 266 (2014), pp.~4150--4188.

\bibitem{AmbrosioGigliSavare08}
{\sc L.~Ambrosio, N.~Gigli, and G.~Savar{\'e}}, {\em Gradient flows in metric spaces and in the space of probability measures}, Lectures in Mathematics ETH Z\"urich, Birkh\"auser Verlag, Basel, second~ed., 2008.

\bibitem{AmbrosioGigliSavare11-3}
\leavevmode\vrule height 2pt depth -1.6pt width 23pt, {\em Density of {L}ipschitz functions and equivalence of weak gradients in metric measure spaces}, Rev. Mat. Iberoam., 29 (2013), pp.~969--996.

\bibitem{AmbrosioGigliSavare11}
\leavevmode\vrule height 2pt depth -1.6pt width 23pt, {\em Calculus and heat flow in metric measure spaces and applications to spaces with {R}icci bounds from below}, Invent. Math., 195 (2014), pp.~289--391.

\bibitem{AmbrosioGigliSavare11-2}
\leavevmode\vrule height 2pt depth -1.6pt width 23pt, {\em Metric measure spaces with {R}iemannian {R}icci curvature bounded from below}, Duke Math. J., 163 (2014), pp.~1405--1490.

\bibitem{AmbrosioHonda17}
{\sc L.~Ambrosio and S.~Honda}, {\em New stability results for sequences of metric measure spaces with uniform {R}icci bounds from below}, in Measure theory in non-smooth spaces, Partial Differ. Equ. Meas. Theory, De Gruyter Open, Warsaw, 2017, pp.~1--51.

\bibitem{AmbrosioHondaPortegies18}
{\sc L.~Ambrosio, S.~Honda, and J.~W. Portegies}, {\em Continuity of nonlinear eigenvalues in {${\rm CD}(K,\infty)$} spaces with respect to measured {G}romov-{H}ausdorff convergence}, Calc. Var. Partial Differential Equations, 57 (2018), pp.~Paper No. 34, 23.

\bibitem{AmbrosioHondaTewodrose18}
{\sc L.~Ambrosio, S.~Honda, and D.~Tewodrose}, {\em Short-time behavior of the heat kernel and {W}eyl's law on {${\rm RCD}^*(K,N)$} spaces}, Ann. Global Anal. Geom., 53 (2018), pp.~97--119.

\bibitem{AmbrosioIkonenLucicPasqualetto24}
{\sc L.~Ambrosio, T.~Ikonen, D.~Lu{\v{c}}i{\'c}, and E.~Pasqualetto}, {\em Metric {Sobolev} spaces. {I}: {Equivalence} of definitions}, Milan J. Math., 92 (2024), pp.~255--347.

\bibitem{AmbrosioNobiliRenziVitillaro26}
{\sc L.~Ambrosio, F.~Nobili, F.~Renzi, and F.~Vitillaro}, {\em {Stability of local Riemannian Ricci curvature lower bounds}}.
\newblock {in preparation}.

\bibitem{AmbrosioStraTrevisan17}
{\sc L.~Ambrosio, F.~Stra, and D.~Trevisan}, {\em Weak and strong convergence of derivations and stability of flows with respect to {MGH} convergence}, J. Funct. Anal., 272 (2017), pp.~1182--1229.

\bibitem{AntonelliBrueFogagnoloPozzetta22}
{\sc G.~Antonelli, E.~Bru{\`e}, M.~Fogagnolo, and M.~Pozzetta}, {\em On the existence of isoperimetric regions in manifolds with nonnegative {Ricci} curvature and {Euclidean} volume growth}, Calc. Var. Partial Differ. Equ., 61 (2022), p.~40.
\newblock Id/No 77.

\bibitem{AntonelliPasqualettoPozzettaSemola25}
{\sc G.~Antonelli, E.~Pasqualetto, M.~Pozzetta, and D.~Semola}, {\em Sharp isoperimetric comparison on non-collapsed spaces with lower {Ricci} bounds}, Ann. Sci. {\'E}c. Norm. Sup{\'e}r. (4), 58 (2025), pp.~1--52.

\bibitem{BrenaNobiliPasqualetto22}
{\sc C.~Brena, F.~Nobili, and E.~Pasqualetto}, {\em Maps of bounded variation from {PI} spaces to metric spaces}, Ann. Sc. Norm. Super. Pisa, Cl. Sci. (5), 26 (2025), pp.~1475--1519.

\bibitem{BruPasSem19}
{\sc E.~Bru{\'e}, E.~Pasqualetto, and D.~Semola}, {\em R{}ectifiability of the reduced boundary for sets of finite perimeter over {{\sf RCD}$(K,N)$} spaces.}, J. Eur. Math. Soc., 25 (2022), pp.~413--465.

\bibitem{BruPasSem21-constantcodimension}
{\sc E.~Bru{\'e}, E.~Pasqualetto, and D.~Semola}, {\em Constancy of the dimension in codimension one and locality of the unit normal on {{\(\mathrm{RCD}(K,N)\)}} spaces}, Ann. Sc. Norm. Super. Pisa, Cl. Sci. (5), 24 (2023), pp.~1765--1816.

\bibitem{CarronMondelloTewodrose21}
{\sc G.~Carron, I.~Mondello, and D.~Tewodrose}, {\em Limits of manifolds with a {K}ato bound on the {R}icci curvature}, Geom. Topol., 28 (2024), pp.~2635--2745.

\bibitem{CarronMondelloTewodrose22}
\leavevmode\vrule height 2pt depth -1.6pt width 23pt, {\em Limits of manifolds with a {Kato} bound on the {Ricci} curvature. {II}}, Ann. Fenn. Math., 50 (2025), pp.~623--663.

\bibitem{CavallettiHuesmann15}
{\sc F.~Cavalletti and M.~Huesmann}, {\em Existence and uniqueness of optimal transport maps}, Ann. Inst. H. Poincar\'{e} C Anal. Non Lin\'{e}aire, 32 (2015), pp.~1367--1377.

\bibitem{CavallettiMilman21}
{\sc F.~Cavalletti and E.~Milman}, {\em The globalization theorem for the curvature-dimension condition}, Invent. Math., 226 (2021), pp.~1--137.

\bibitem{CavMon17}
{\sc F.~Cavalletti and A.~Mondino}, {\em Optimal maps in essentially non-branching spaces}, Commun. Contemp. Math., 19 (2017), pp.~1750007, 27.

\bibitem{ChaDeP08}
{\sc T.~Champion, L.~De~Pascale, and P.~Juutinen}, {\em The {$\infty$}-{W}asserstein distance: local solutions and existence of optimal transport maps}, SIAM J. Math. Anal., 40 (2008), pp.~1--20.

\bibitem{Cheeger00}
{\sc J.~Cheeger}, {\em Differentiability of {L}ipschitz functions on metric measure spaces}, Geom. Funct. Anal., 9 (1999), pp.~428--517.

\bibitem{Cheeger-Colding96}
{\sc J.~Cheeger and T.~H. Colding}, {\em Lower bounds on {R}icci curvature and the almost rigidity of warped products}, Ann. of Math. (2), 144 (1996), pp.~189--237.

\bibitem{Cheeger-Colding97III}
\leavevmode\vrule height 2pt depth -1.6pt width 23pt, {\em On the structure of spaces with {R}icci curvature bounded below. {III}}, J. Differential Geom., 54 (2000), pp.~37--74.

\bibitem{Deng25}
{\sc Q.~Deng}, {\em H{\"o}lder continuity of tangent cones in {{\(\operatorname{RCD}({K}, {N})\)}} spaces and applications to nonbranching}, Geom. Topol., 29 (2025), pp.~1037--1114.

\bibitem{DiMarinoPhD}
{\sc S.~Di~Marino}, {\em {Recent advances on {B}{V} and {S}obolev spaces in metric measure spaces}}.
\newblock PhD Thesis, 2014.

\bibitem{DiMarinoSpeight13}
{\sc S.~Di~Marino and G.~Speight}, {\em The $p-${W}eak {G}radient {D}epends on $p$}, Proc. Amer. Math. Soc., 143 (2013), pp.~5239--5252.

\bibitem{Fukaya87}
{\sc K.~Fukaya}, {\em Collapsing of {R}iemannian manifolds and eigenvalues of {L}aplace operator}, Invent. Math., 87 (1987), pp.~517--547.

\bibitem{Gigli10}
{\sc N.~Gigli}, {\em On the heat flow on metric measure spaces: existence, uniqueness and stability}, Calc. Var. PDE, 39 (2010), pp.~101--120.

\bibitem{G11}
\leavevmode\vrule height 2pt depth -1.6pt width 23pt, {\em Introduction to optimal transport: theory and applications}, Publica\c{c}\~{o}es Matem\'{a}ticas do IMPA. [IMPA Mathematical Publications], Instituto Nacional de Matem\'{a}tica Pura e Aplicada (IMPA), Rio de Janeiro, 2011.
\newblock 28${^{{}}{\rm{o}}}$ Col\'{o}quio Brasileiro de Matem\'{a}tica. [28th Brazilian Mathematics Colloquium].

\bibitem{Gigli23_working}
{\sc N.~Gigli}, {\em {De Giorgi and Gromov working together}}.
\newblock arXiv:2306.14604, 2023.

\bibitem{GMS15}
{\sc N.~Gigli, A.~Mondino, and G.~Savar\'e}, {\em Convergence of pointed non-compact metric measure spaces and stability of {R}icci curvature bounds and heat flows}, Proc. Lond. Math. Soc. (3), 111 (2015), pp.~1071--1129.

\bibitem{GigliNobili22}
{\sc N.~Gigli and F.~Nobili}, {\em A first-order condition for the independence on {$p$} of weak gradients}, J. Funct. Anal., 283 (2022), pp.~Paper No. 109686, 52.

\bibitem{GP19}
{\sc N.~Gigli and E.~Pasqualetto}, {\em Lectures on {N}onsmooth {D}ifferential {G}eometry}, SISSA Springer Series, Springer, 2020.

\bibitem{GivSho84}
{\sc C.~R. Givens and R.~M. Shortt}, {\em A class of {W}asserstein metrics for probability distributions}, Michigan Math. J., 31 (1984), pp.~231--240.

\bibitem{Gromov07}
{\sc M.~Gromov}, {\em Metric structures for {R}iemannian and non-{R}iemannian spaces}, Modern Birkh\"auser Classics, Birkh\"auser Boston Inc., Boston, MA, english~ed., 2007.
\newblock Based on the 1981 French original, With appendices by M. Katz, P. Pansu and S. Semmes, Translated from the French by Sean Michael Bates.

\bibitem{HK00}
{\sc P.~Haj{\l}asz and P.~Koskela}, {\em Sobolev met {Poincar{\'e}}}, vol.~688 of Mem. Am. Math. Soc., Providence, RI: American Mathematical Society (AMS), 2000.

\bibitem{Honda15}
{\sc S.~Honda}, {\em Ricci curvature and {{\(L^{p}\)}}-convergence}, J. Reine Angew. Math., 705 (2015), pp.~85--154.

\bibitem{Honda17}
\leavevmode\vrule height 2pt depth -1.6pt width 23pt, {\em Spectral convergence under bounded {Ricci} curvature}, J. Funct. Anal., 273 (2017), pp.~1577--1662.

\bibitem{HondaKettererMondelloPeralesRigoni24}
{\sc S.~Honda, C.~Ketterer, I.~Mondello, R.~Perales, and C.~Rigoni}, {\em Gromov-hausdorff stability of tori under {Ricci} and integral scalar curvature bounds}, Nonlinear Anal., Theory Methods Appl., Ser. A, Theory Methods, 249 (2024), p.~27.
\newblock Id/No 113629.

\bibitem{Kasue05}
{\sc A.~Kasue}, {\em Convergence of metric measure spaces and energy forms [mr1962229]}, in Selected papers on differential equations and analysis, vol.~215 of Amer. Math. Soc. Transl. Ser. 2, Amer. Math. Soc., Providence, RI, 2005, pp.~79--96.

\bibitem{Kell17_2}
{\sc M.~Kell}, {\em On interpolation and curvature via {Wasserstein} geodesics}, Adv. Calc. Var., 10 (2017), pp.~125--167.

\bibitem{Kell17}
\leavevmode\vrule height 2pt depth -1.6pt width 23pt, {\em Transport maps, non-branching sets of geodesics and measure rigidity}, Adv. Math., 320 (2017), pp.~520--573.

\bibitem{KuwaeShioya03}
{\sc K.~Kuwae and T.~Shioya}, {\em Convergence of spectral structures: a functional analytic theory and its applications to spectral geometry}, Comm. Anal. Geom., 11 (2003), pp.~599--673.

\bibitem{Lisini07}
{\sc S.~Lisini}, {\em Characterization of absolutely continuous curves in {W}asserstein spaces}, Calc. Var. Partial Differential Equations, 28 (2007), pp.~85--120.

\bibitem{Lott-Villani09}
{\sc J.~Lott and C.~Villani}, {\em Ricci curvature for metric-measure spaces via optimal transport}, Ann. of Math. (2), 169 (2009), pp.~903--991.

\bibitem{Martio16-2}
{\sc O.~Martio}, {\em The space of functions of bounded variation on curves in metric measure spaces}, Conform. Geom. Dyn., 20 (2016), pp.~81--96.

\bibitem{Miranda03}
{\sc M.~Miranda, Jr.}, {\em Functions of bounded variation on ``good'' metric spaces}, J. Math. Pures Appl. (9), 82 (2003), pp.~975--1004.

\bibitem{Nobili24_overview}
{\sc F.~Nobili}, {\em An overview of the stability of {Sobolev} inequalities on {Riemannian} manifolds with {Ricci} lower bounds}, Indag. Math., New Ser., 37 (2026), pp.~960--992.

\bibitem{NobiliPasqualettoSchultz22}
{\sc F.~Nobili, E.~Pasqualetto, and T.~Schultz}, {\em On master test plans for the space of {BV} functions}, Adv. Calc. Var., 16 (2023), pp.~1061--1092.

\bibitem{NobiliViolo22}
{\sc F.~Nobili and I.~Y. Violo}, {\em Rigidity and almost rigidity of {S}obolev inequalities on compact spaces with lower {R}icci curvature bounds}, Calc. Var. Partial Differential Equations, 61 (2022), p.~Paper No. 180.

\bibitem{NobiliViolo24}
\leavevmode\vrule height 2pt depth -1.6pt width 23pt, {\em Stability of {S}obolev inequalities on {R}iemannian manifolds with {R}icci curvature lower bounds}, Adv. Math., 440 (2024), p.~Paper No. 109521.

\bibitem{NobiliViolo24_PZ}
\leavevmode\vrule height 2pt depth -1.6pt width 23pt, {\em Fine {P{\'o}lya}-{Szeg{\H{o}}} rearrangement inequalities in metric spaces and applications}, Calc. Var. Partial Differ. Equ., 64 (2025), p.~56.
\newblock Id/No 276.

\bibitem{NobiliViolo25}
\leavevmode\vrule height 2pt depth -1.6pt width 23pt, {\em Generalized existence of extremizers for the sharp {{\(p\)}}-{Sobolev} inequality on {Riemannian} manifolds with nonnegative curvature}, Nonlinear Anal., Theory Methods Appl., Ser. A, Theory Methods, 266 (2026), p.~19.
\newblock Id/No 114029.

\bibitem{Ohta07}
{\sc S.-i. Ohta}, {\em On the measure contraction property of metric measure spaces}, Comment. Math. Helv., 82 (2007), pp.~805--828.

\bibitem{Pozzetta23}
{\sc M.~Pozzetta}, {\em Isoperimetry on manifolds with {R}icci bounded below: overview of recent results and methods}, in Anisotropic isoperimetric problems and related topics, vol.~62 of Springer INdAM Ser., Springer, Singapore, [2024] \copyright 2024, pp.~49--89.

\bibitem{Rajala12-2}
{\sc T.~Rajala}, {\em Interpolated measures with bounded density in metric spaces satisfying the curvature-dimension conditions of {S}turm}, J. Funct. Anal., 263 (2012), pp.~896--924.

\bibitem{RajalaDgiustadimensionefinita}
\leavevmode\vrule height 2pt depth -1.6pt width 23pt, {\em Improved geodesics for the reduced curvature-dimension condition in branching metric spaces}, Discrete Contin. Dyn. Syst., 33 (2013), pp.~3043--3056.

\bibitem{RajalaSturm12}
{\sc T.~Rajala and K.-T. Sturm}, {\em Non-branching geodesics and optimal maps in strong ${CD(K,{\infty})}$-spaces}, Calc. Var. Partial Differential Equations, 50 (2012), pp.~831--846.

\bibitem{Savare13}
{\sc G.~Savar{\'e}}, {\em Self-improvement of the {B}akry-\'{E}mery condition and {W}asserstein contraction of the heat flow in {${\rm RCD}(K,\infty)$} metric measure spaces}, Discrete Contin. Dyn. Syst., 34 (2014), pp.~1641--1661.

\bibitem{Shanmugalingam00}
{\sc N.~Shanmugalingam}, {\em Newtonian spaces: an extension of {S}obolev spaces to metric measure spaces}, Rev. Mat. Iberoamericana, 16 (2000), pp.~243--279.

\bibitem{Sturm06I}
{\sc K.-T. Sturm}, {\em On the geometry of metric measure spaces. {I}}, Acta Math., 196 (2006), pp.~65--131.

\bibitem{Sturm06II}
\leavevmode\vrule height 2pt depth -1.6pt width 23pt, {\em On the geometry of metric measure spaces. {II}}, Acta Math., 196 (2006), pp.~133--177.

\bibitem{Villani09}
{\sc C.~Villani}, {\em Optimal transport. Old and new}, vol.~338 of Grundlehren der Mathematischen Wissenschaften, Springer-Verlag, Berlin, 2009.

\bibitem{VonRenesse08}
{\sc M.-K. von Renesse}, {\em On local {Poincar{\'e}} via transportation}, Math. Z., 259 (2008), pp.~21--31.

\bibitem{Wu25}
{\sc W.~Wu}, {\em {Almost rigidity of the {Talenti}-type comparison theorem on ${\rm RCD}(0,{N})$ space}}.
\newblock arXiv:2506.07100, 2025.

\end{thebibliography}
\end{document}